\documentclass[]{interact}

\usepackage{braket,amsfonts}
\usepackage{subdepth}

\usepackage{array}
\usepackage{amsmath}
\allowdisplaybreaks
\usepackage{algorithmic}
\usepackage{algorithm}
\usepackage{enumitem}

\usepackage{graphicx,epstopdf}

\usepackage{url}
\usepackage{amsopn}
\usepackage{comment}
\usepackage{graphics}
\usepackage{amssymb}
\usepackage{amsmath}
\usepackage{bm}
\usepackage{xcolor}
\usepackage{graphicx}
\usepackage{todonotes}
\usepackage{mathrsfs}
\usepackage{indentfirst}
\usepackage{hyperref}
\hypersetup{
	breaklinks,
	colorlinks,
	citecolor={green!50!black},
	urlcolor={red!50!black},
	linkcolor={green!50!black}
}
\usepackage[nameinlink]{cleveref}
\crefname{subsection}{section}{sections}
\Crefname{algorithm}{Algorithm}{Algorithms}
\crefname{appendix}{Appendix}{Appendices}
\Crefname{appendix}{Appendix}{Appendices}

\Crefname{ALC@unique}{Line}{Lines}

\usepackage[numbers,sort&compress]{natbib}
\makeatletter
\renewcommand\NAT@bibsetnum[1]{\settowidth\labelwidth{\@biblabel{#1}}%
	\setlength{\leftmargin}{\bibindent}\addtolength{\leftmargin}{\dimexpr\labelwidth+\labelsep\relax}%
	\setlength{\itemindent}{-\bibindent}%
	\setlength{\listparindent}{\itemindent}
	\setlength{\itemsep}{\bibsep}\setlength{\parsep}{\z@}%
	\ifNAT@openbib
	\addtolength{\leftmargin}{\bibindent}%
	\setlength{\itemindent}{-\bibindent}%
	\setlength{\listparindent}{\itemindent}%
	\setlength{\parsep}{0pt}%
	\fi
}
\makeatother

\usepackage{etoolbox}
\makeatletter
\patchcmd{\NAT@test}{\else \NAT@nm}{\else \NAT@hyper@{\NAT@nm}}{}{}
\makeatother

\newtheorem{theorem}{Theorem}[section]
\crefname{theorem}{Theorem}{Theorems}

\crefname{cor}{Corollary}{Corollaries}
\newtheorem{lem}{Lemma}[section]
\crefname{lem}{Lemma}{Lemmas}
\newtheorem{rem}{Remark}[section]
\crefname{rem}{Remark}{Remarks}
\newtheorem{ex}{Example}
\crefname{ex}{Example}{Examples}

\crefname{ap}{Appendix}{Appendices}

\newcommand{\calA}{{\cal A}}
\newcommand{\calP}{{\cal P}}

\newcommand{\nul}{\mathop{\mathrm{null}}}

\usepackage{booktabs}
\usepackage{threeparttable}
\usepackage{float}
\usepackage{booktabs}%\toprule、\midrule、\bottomrule
\usepackage{multirow}%\multicolumn{}{}{}
\usepackage{longtable}

\usepackage{graphicx}
\usepackage{subfigure}

\usepackage[T1]{fontenc}
\usepackage{microtype}

\numberwithin{equation}{section}

\newcommand{\mycomment}[1]{}

\usepackage{lmodern}
\usepackage{anyfontsize}

\makeatletter

\newcommand{\Rmnum}[1]{\expandafter\@slowromancap\romannumeral #1@}
\makeatother

\begin{document}
	
\title{Randomized inexact block triangular preconditioners for double saddle-point systems in PDE-constrained optimization}
	
	    \author{Siqi Liang$^1$
		\thanks{$^\dagger$ Correspondence to hna@cau.edu.cn.}
		\and
		Na Huang$^\dagger$
		\thanks{$^1$Department of Applied Mathematics, College of Science, China Agricultural University, Beijing, China. E-mail: liangsiqi@cau.edu.cn.}
		\thanks{$^\dagger$Department of Applied Mathematics, College of Science, China Agricultural University, Beijing, China. E-mail: hna@cau.edu.cn.}
	}

	\maketitle
	
\begin{abstract}
We develop a new class of inexact block triangular preconditioners for double saddle-point systems arising from PDE-constrained optimization. The proposed preconditioners are constructed through matrix factorization techniques while preserving the inherent block structure of the original systems. A comprehensive spectral analysis of the preconditioned matrices is provided, yielding explicit bounds for both real and nonreal eigenvalues. To enable efficient construction of the inexact preconditioners, randomized strategies are introduced to select the required subblocks. We establish high-probability bounds for the expected approximation error, with the error estimates explicitly characterized in terms of the eigenvalues of the associated matrices. Numerical experiments demonstrate the effectiveness, robustness, and scalability of the proposed preconditioners, and validate the efficiency of the randomized construction strategies.
\end{abstract}
	
	\begin{keywords}
		double saddle-point system, saddle-point system, randomized preconditioning, spectral analysis, Krylov subspace methods.
	\end{keywords}

	\begin{amscode}
		65F08, 65F10, 65F50.
	\end{amscode}

	\section{Introduction}\label{sec:introduction}
    Linear systems of saddle‑point structure arise in a wide range of scientific computing applications, including PDE-constrained optimization \cite{rees2010optimal,bergamaschi2025spectral}, computational fluid dynamics \cite{rhebergen2015three,elman2014finite}, quadratic programming \cite{han2013local,huang2016consensus}, and least-squares problems \cite{yuan1996numerical,bojanczyk2003equality,bjorck2024numerical}. Given positive integers $n \ge m \ge p$, let $A \in \mathbb{R}^{n \times n}$ and $E \in \mathbb{R}^{p \times p}$ be symmetric positive definite (SPD) matrices, and let $B \in \mathbb{R}^{m \times n}$ and $C \in \mathbb{R}^{p \times m}$ be full row rank matrices. We consider the following linear system with a double saddle-point structure:
	\begin{equation}\label{eq:double saddle}
		\calA w:= \left( \begin{matrix}
			A&		0&		B^T\\
			0&		E&		C\\
			B&		C^T&	0\\
		\end{matrix} \right) \left( \begin{array}{c}
			x\\
			y\\
			z \\
		\end{array} \right) =\left( \begin{array}{l}
			b_1\\
			b_2\\
			b_3\\
		\end{array} \right) =: b,	
	\end{equation}    
	where $b_1 \in \mathbb{R}^n$, $b_2 \in \mathbb{R}^p$, and $b_3 \in \mathbb{R}^m$ are prescribed vectors, and $x \in \mathbb{R}^n$, $y \in \mathbb{R}^p$, and $z \in \mathbb{R}^m$ are the unknown vectors to be determined. Here, $(\cdot)^T$ denotes the transpose operator, and $0$ denotes a zero matrix of appropriate dimensions. In addition, there exists another class of double saddle-point linear systems, structurally related but not identical to \eqref{eq:double saddle}, arising from liquid crystal director modeling \cite{ramage2013preconditioned} and coupled Stokes--Darcy problems \cite{cai2009preconditioning}. Their numerical solution has attracted considerable attention in recent years; see, for example, \cite{ali2018iterative,huang2023gsor,dou2023class,beik2022preconditioning,liang2026partial} and the references therein. In this paper, however, we focus exclusively on system \eqref{eq:double saddle}.

	The diverse applications of system \eqref{eq:double saddle} have motivated extensive studies on efficient numerical solution methods, particularly for the case in which the subblock $E=0$. Existing methods include shift-splitting iterative methods \cite{cao2019shift,zhang2022lopsided,ahmad2025robust}, Uzawa-type methods \cite{huang2019uzawa,huang2020variable}, and Krylov subspace methods \cite{saad2003iterative}. Since the effectiveness of Krylov subspace methods depends strongly on the availability of suitable preconditioners, substantial effort has been devoted to the design of efficient preconditioning techniques for such systems. For the case $E=0$, the existing preconditioners mainly consist of block diagonal preconditioners \cite{huang2019spectral,abdolmaleki2022new}, block triangular preconditioners \cite{aslani2023block,balani2024some}, and preconditioners derived from splitting iterative methods \cite{cao2019shift,liang2024improvement,zhang2022lopsided,salkuyeh2021alternating,li2025uzawa}.

	For the case $E \neq 0$, system \eqref{eq:double saddle} can be reformulated as the following standard $2\times 2$ block saddle-point system:
    \begin{equation}\label{eq:2*2}
		\left( \begin{matrix}
	       A_1&		B_1^T\\
	       B_1&		0\\
        \end{matrix} \right) \left( \begin{array}{c}
			p\\
			z \\
		\end{array} \right)= \left( \begin{array}{l}
			f\\
			b_3\\
		\end{array} \right),	
	\end{equation}   
    where $A_1 = \left( \begin{smallmatrix}
    	A&		0\\
    	0&		E\\
    \end{smallmatrix} \right)$, $B_1=( \begin{matrix}
    B&		C^T\\
    \end{matrix} )$, $p=(x;y)$, and $f = (b_1;b_2)$. Numerical methods for standard saddle-point systems of the form \eqref{eq:2*2} have been extensively studied, including Uzawa‑type methods \cite{elman1994inexact,bai2008parameterized,bramble1997analysis}, shift‑splitting methods \cite{cao2014shift,cao2017preconditioned}, Hermitian and skew‑Hermitian splitting methods \cite{benzi2004preconditioner,cao2016simplified}, block diagonal preconditioners \cite{benzi2008some}, and block triangular preconditioners \cite{benzi2008some,simoncini2004block}. For more details, see \cite{benzi2005numerical} and the references therein. Although block aggregation can formally reformulate system \eqref{eq:double saddle} as a $2\times2$ saddle-point system, such a transformation is not necessarily beneficial for large-scale problems, as it may obscure the hierarchical coupling structure associated with multiple constraints. Specifically, for system \eqref{eq:double saddle} arising from distributed control problems, the subblocks $B$ and $C$ correspond to distinct constraint operators with different physical meanings, representing two independent constraint structures \cite{rees2010optimal}. Aggregating these blocks into a single constraint operator combines heterogeneous constraints and may discard important block-wise information that can be exploited for designing efficient and scalable solvers. Furthermore, as demonstrated by the numerical results in \Cref{sec:numres}, directly solving the aggregated formulation of \eqref{eq:double saddle} may result in reduced computational efficiency. Consequently, preconditioners developed for standard $2\times2$ saddle-point systems may not fully exploit the underlying block structure of \eqref{eq:double saddle}, thereby limiting their effectiveness for large-scale applications. Therefore, developing scalable structure-preserving preconditioners that efficiently exploit the hierarchical block structure of double saddle-point systems is of both practical and theoretical significance.

    Substantial effort has been devoted to the development of efficient preconditioners for the case $E \neq 0$ by exploiting the special block structure of $\calA$. In particular, \citet{benzi2011dimensional} studied the following system arising from the discretization of the Navier-Stokes equations: 
    \begin{equation}\label{eq:double saddle_equal}
		\mathcal{B} w:= \left( \begin{matrix}
			A&		0&		B^T\\
			0&		E&		C\\
			-B&		-C^T&	0\\
		\end{matrix} \right) \left( \begin{array}{c}
			x\\
			y\\
			z \\
		\end{array} \right) =\left( \begin{array}{c}
			b_1\\
			b_2\\
			-b_3\\
		\end{array} \right) =: \tilde{b}.
	\end{equation}
    Clearly, \eqref{eq:double saddle_equal} is equivalent to \eqref{eq:double saddle} under a simple permutation of variables. They split $\mathcal{B}$ along the velocity field components and proposed an alternating iterative method, which induces a dimensional splitting (DS) preconditioner of the form:
	\begin{equation}\label{eq:pre_DS}
		\calP_{\rm DS} = \frac{1}{2\alpha}\left( \begin{matrix}
			\alpha I+A&		0&		B^T\\
			0&		\alpha I&		0\\
			-B&		0&		\alpha I\\
		\end{matrix} \right) \left( \begin{matrix}
			\alpha I&		0&		0\\
			0&		\alpha I+E&		C\\
			0&		-C^T&		\alpha I\\
		\end{matrix} \right),
	\end{equation}
	where $\alpha>0$ is a parameter and $I$ denotes the identity matrix of appropriate dimension. Subsequently, \citet{benzi2011relaxed} removed the shift terms in the $(1,1)$ and $(2,2)$ blocks of $\calP_{\rm DS}$, and constructed a relaxed dimensional factorization (RDF) preconditioner.	Building on DS preconditioner, \citet{ai2024multi} introduced additional parameters and developed a multi-parameter dimensional splitting preconditioner for \eqref{eq:double saddle_equal}. \citet{ahmad2026class} studied the solvability of \eqref{eq:double saddle_equal} under certain conditions and proposed a generalized shift-splitting iterative method. Furthermore, this method induced generalized shift-splitting (GSS) preconditioner. When it is employed as a preconditioner for Krylov subspace methods, each iteration requires solving two linear subsystems with coefficient matrix $\alpha P + \omega A$, two with $\beta Q + \omega E$, and one with $\tau R + \omega^2 B(\alpha P + \omega A)^{-1} B^T + \omega^2 C^T (\beta Q + \omega D)^{-1} C$. To reduce the computational cost, two relaxed variants of the GSS preconditioner were introduced in \cite{ahmad2026class} by removing $\alpha P$ from the $(1,1)$ block and $\beta Q$ from the $(2,2)$ block of GSS, respectively. In addition, \citet{bradley2023eigenvalue} considered a more general class of double saddle-point systems together with corresponding preconditioning strategies, and developed a block diagonal (BD) preconditioner
	\begin{equation}\label{eq:pre_BD}
		\calP_{\rm BD}=\left( \begin{matrix}
			A&		0&		0\\
			0&		S&		0\\
			0&		0&		E+CS^{-1}C^T\\
		\end{matrix} \right)
	\end{equation}    
    for the linear system of form
	\begin{equation}\label{eq:double saddle_equal2}
		\mathcal{C} w:= \left( \begin{matrix}
			A&		B^T&		0\\
			B&		0&		C^T\\
			0&		C&	E\\
		\end{matrix} \right) \left( \begin{array}{c}
			x\\
			z\\
			y \\
		\end{array} \right) =\left( \begin{array}{c}
			b_1\\
			b_3\\
			b_2\\
		\end{array} \right) =: \widehat{b},
	\end{equation}
    where $S= BA^{-1}B^T$. Similarly, \eqref{eq:double saddle_equal2} is also equivalent to \eqref{eq:double saddle}. Based on the preconditioner proposed in \cite{pearson2024symmetric} for multiple saddle-point systems, \citet{bergamaschi2025spectral} investigated the spectral properties of the corresponding preconditioned matrix for \eqref{eq:double saddle_equal2}. Their analysis relies on a key result derived from the KKT conditions of an associated optimization problem, which will also play an important role in our analysis.

	Specialized preconditioning techniques have also been developed for system \eqref{eq:double saddle} arising from PDE-constrained optimization problems \cite{pearson2014preconditioners,fan2024preconditioners,ke2018some,rees2010block}. \citet{rees2010optimal} proposed block diagonal and constraint preconditioners. \citet{zhang2014block} developed block counter-diagonal and block counter-triangular preconditioners. \citet{pearson2012new} introduced improved Schur complement approximations and constructed block diagonal, block triangular, and symmetric indefinite preconditioners.

	Integrating randomized techniques into conventional methods is of significant theoretical and practical importance for reducing the computational complexity of large-scale matrix problems and improving computational and storage efficiency, making it an important research direction in numerical linear algebra \cite{halko2011finding,martinsson2020randomized,murray2023randomized,woodruff2014sketching}. The development of randomized preconditioning techniques for linear systems is still at an early stage. For certain classes of regularized linear systems, existing works have proposed the randomized Nystr\"om preconditioner \cite{frangella2023randomized}, randomized pivoted Cholesky preconditioner \cite{diaz2023robust}, and preconditioner via randomized range deflation \cite{balabanov2025preconditioning}.

To the best of our knowledge, randomized techniques have not yet been explored for the large-scale saddle-point system \eqref{eq:double saddle}. This gap motivates the integration of randomized ideas into preconditioner design, aiming to develop computationally efficient and memory-saving preconditioners that accelerate the solution of \eqref{eq:double saddle}. In this work, we propose and analyze a class of randomized inexact preconditioners for \eqref{eq:double saddle}. Moreover, in the analysis of randomized approximation errors, we extend existing theoretical results beyond the positive semidefinite setting and establish error bounds for symmetric indefinite matrices. The main contributions of this paper are summarized as follows.
	\begin{itemize}[label=$\bullet$]
	\item Motivated by matrix factorization techniques and considering both computational efficiency and storage requirements, we propose a class of inexact block triangular preconditioners for solving \eqref{eq:double saddle}. Unlike standard block aggregation approaches, the proposed framework preserves the hierarchical coupling structure among multiple constraints and avoids the loss of essential block information.

	\item By integrating randomized low-rank approximation techniques into the proposed framework, we develop efficient strategies for selecting the subblocks required in the inexact preconditioners. We establish that the expected approximation error is bounded with high probability, where the bound is explicitly characterized in terms of the eigenvalues of the associated matrix.

	\item Extensive numerical experiments are conducted to evaluate the performance of the proposed preconditioners against several existing approaches, including methods originally developed for \eqref{eq:2*2}. The results demonstrate the effectiveness and robustness of the proposed preconditioners, as well as the potential of randomized techniques for efficient preconditioner construction.
\end{itemize}

    The remainder of this paper is organized as follows. In \Cref{sec:precon}, we propose a class of inexact block triangular preconditioners and analyze the spectral properties of the corresponding preconditioned matrices. \Cref{sec:select} develops strategies for selecting the subblocks in the inexact preconditioners by incorporating randomized low-rank approximation techniques. Numerical experiments are presented in \Cref{sec:numres}. Finally, conclusions and directions for future research on randomized preconditioning techniques are discussed in \Cref{sec:conclusion}.
	
Before concluding this section, we introduce several notations used throughout the paper. The sets of real and complex numbers are denoted by $\mathbb{R}$ and $\mathbb{C}$, respectively. For any vector $h \in \mathbb{C}^r$, $h^*$ denotes its conjugate transpose. For a matrix $H \in \mathbb{R}^{r \times r}$, its spectral radius and spectrum are denoted by $\rho(H)$ and ${\rm sp}(H)$, respectively. If $H$ is symmetric, then $\lambda_{\max}(H)$ and $\lambda_{\min}(H)$ denote its largest eigenvalue and smallest eigenvalue, respectively. We use $\sigma_{\max}(G)$ and $\sigma_{\min}(G)$ to denote the largest and smallest singular values of $G \in \mathbb{R}^{r_1 \times r_2}$. The symbol $\|\cdot\|$ stands for the spectral norm for matrices and vectors. The null space of a matrix is denoted by $\nul(\cdot)$. The notation $|\cdot|$ denotes the modulus of a scalar, and $\mathrm{i}$ is the imaginary unit. Furthermore, $\Re(\cdot)$ and $\Im(\cdot)$ denote the real and imaginary parts, respectively. For brevity, we use $(x^T,y^T,z^T)^T \equiv (x;y;z)$.

	  	\section{Inexact block triangular preconditioners and spectral analysis}\label{sec:precon}

    In this section, by exploiting a factorization of $\calA$ and taking computational efficiency into consideration, we propose a class of inexact block triangular preconditioners for \eqref{eq:double saddle} and investigate spectral bounds for the corresponding preconditioned matrices. 
    
    Consider the following factorization:
	\begin{equation*}
		\calA = \left( \begin{matrix}
			I&		0&		0\\
			0&		I&		0\\
			BA^{-1}&		0&		I\\
		\end{matrix} \right) \left( \begin{matrix}
			A&		0&		0\\
			0&		E&		C\\
			0&		C^T&		-S\\
		\end{matrix} \right) \left( \begin{matrix}
			I&		0&		A^{-1}B^T\\
			0&		I&		0\\
			0&		0&		I\\
		\end{matrix} \right), 
	\end{equation*}
where $S=BA^{-1}B^T$. Motivated by this factorization, it is natural to employ $\left( \begin{smallmatrix}
			A&		0&		0\\
			0&		E&		C\\
			0&		C^T&		-S\\
\end{smallmatrix} \right)$ as a preconditioner. However, when it is used to precondition Krylov subspace methods, one needs to solve linear subsystems involving $A$, $E$, and $S+C^TE^{-1}C$ at each iteration. It should be noted that forming and solving systems with $S+C^TE^{-1}C$ generally incurs considerable computational and storage costs. Therefore, following the approach in \cite{liang2025inexact,bergamaschi2025spectral}, we replace $S+C^TE^{-1}C$ with an SPD matrix $Q \in \mathbb{R}^{m \times m}$ and introduce the following inexact block triangular preconditioner	
	\begin{equation}\label{eq:pre_inexact}
		\calP = \left( \begin{matrix}
			\widehat{A}&		0&		0\\
			0&		\widehat{E}&		C\\
			0&		0&		-Q\\
		\end{matrix} \right),
	\end{equation}	
where $\widehat{A}$ and $\widehat{E}$ are SPD approximations of $A$ and $E$, respectively. Clearly, applying $\calP$ as a preconditioner for Krylov subspace methods requires solving three SPD linear subsystems with coefficient matrices $\widehat{A}$, $\widehat{E}$, and $Q$ at each iteration.

It should be emphasized that even when $\calP$ is interpreted in a $2\times 2$ block form, it does not correspond to the upper triangular preconditioners for the standard saddle-point system studied in \cite{benzi2005numerical, simoncini2004block}. In particular, the upper-right block of $\calP$ is $(0^T,C^T)^T$, rather than $(B^T,C^T)^T$.

In what follows, we estimate the spectral bounds for the preconditioned matrix $\calP^{-1}\calA$. Let $\lambda$ be an arbitrary eigenvalue of $\calP^{-1}\calA$ and $\zeta$ the corresponding eigenvector. Then the eigenvalue problem $\calP^{-1}\calA \zeta = \lambda \zeta$ can be equivalently rewritten as the generalized eigenvalue problem $\mathcal{D}^{-\frac{1}{2}}\calA\mathcal{D}^{-\frac{1}{2}}\phi=\lambda\mathcal{D}^{-\frac{1}{2}}\calP\mathcal{D}^{-\frac{1}{2}}\phi$, where $\phi = \mathcal{D}^{\frac{1}{2}}\zeta = (x;y;z)$ and $\mathcal{D} = \left( \begin{smallmatrix}
			\widehat{A}&		0&		0\\
			0&		\widehat{E}&		0\\
			0&		0&	 Q\\
\end{smallmatrix} \right)$.
Combining this with \eqref{eq:double saddle} and \eqref{eq:pre_inexact} yields
\begin{align}
		& \widetilde{A}x+\bar{B}^Tz=\lambda x, \label{eq:eigfunction_inexact1}\\
		& \widetilde{E}y+\bar{C}z=\lambda y+\lambda \bar{C} z, \label{eq:eigfunction_inexact2}\\
		& \bar{B}x+\bar{C}^Ty = -\lambda z. \label{eq:eigfunction_inexact3}
\end{align}
Here $\widetilde{A}=\widehat{A}^{-\frac{1}{2}}A\widehat{A}^{-\frac{1}{2}}$, $\widetilde{E}=\widehat{E}^{-\frac{1}{2}}E\widehat{E}^{-\frac{1}{2}}$, $\bar{B}=Q^{-\frac{1}{2}}B\widehat{A}^{-\frac{1}{2}}$, and $\bar{C}=\widehat{E}^{-\frac{1}{2}}CQ^{-\frac{1}{2}}$. 

We now establish bounds for $\lambda$ using \eqref{eq:eigfunction_inexact1}-\eqref{eq:eigfunction_inexact3} by considering the real and nonreal cases separately. To this end, we first introduce several quantities that will be used throughout the analysis.

For a symmetric matrix $M$ and a nonzero vector $w$, the Rayleigh quotient is defined by $r(M,w) = \frac{w^TMw}{w^Tw}$. It is well known that $r(M,w)\in [\, \lambda_{\min}(M), \, \lambda_{\max}(M)\,]$. We then define the following notations:
\begin{align}
		\gamma_A &= r(\widetilde{A},w),
        &\gamma_E &= r(\widetilde{E},w),
        &\gamma_B &= r(\bar{B},w),
        &\gamma_C &= r(\bar{C},w),
        \label{eq:eig-gamma}\\    
        \gamma_{\min}^A&=\lambda_{\min}(\widetilde{A}),
		&\gamma_{\min}^E&=\lambda_{\min}(\widetilde{E}),
        &\gamma_{\min}^B&=\lambda_{\min}(\bar{B}\bar{B}^T),
        &\gamma_{\min}^C&=\lambda_{\min}(\bar{C}^T\bar{C}),
        \label{eq:eig-min}\\
		\gamma_{\max}^A&=\lambda_{\max}(\widetilde{A}), 
		&\gamma_{\max}^E&=\lambda_{\max}(\widetilde{E}), 
		&\gamma_{\max}^B&=\lambda_{\max}(\bar{B}\bar{B}^T),
		&\gamma_{\max}^C&=\lambda_{\max}(\bar{C}^T\bar{C}).
        \label{eq:eig-max}
\end{align}
Without loss of generality, we assume that $\gamma^A_{\min} < 1 < \gamma^A_{\max}$ and $\gamma^E_{\min} < 1 < \gamma^E_{\max}$, which can be readily ensured provided that $\widetilde{A} \neq I$ and $\widetilde{E} \neq I$.

\subsection{Bounds for real eigenvalues}\label{sec:spectral-1}

In this subsection, we investigate the spectral bounds for real $\lambda$, deriving them by analyzing three cases associated with $z=0$, $0\neq z\in\nul(\bar{C})$, and $z \notin \nul(\bar{C})$. 

    \begin{lem}\label{lem:bound1}
		Assume that $A$ and $E$ are SPD, and that $B$ and $C$ have full row rank. For any given SPD matrices $\widehat{A}$, $\widehat{E}$ and $Q$, if $z=0$, then 
        $\min \{ \gamma _{\min}^{A},\gamma _{\min}^{E}\}
        \le\lambda
        \le \max \{ \gamma _{\max}^{A},\,\gamma _{\max}^{E} \}$.
    \end{lem}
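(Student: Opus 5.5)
Setting $z=0$ in \eqref{eq:eigfunction_inexact1}--\eqref{eq:eigfunction_inexact3} reduces the system to
\begin{equation*}
\widetilde{A}x=\lambda x,\qquad \widetilde{E}y=\lambda y,\qquad \bar{B}x+\bar{C}^Ty=0 .
\end{equation*}
Since $\phi=(x;y;0)$ is an eigenvector, $x$ and $y$ cannot both vanish. The plan is to show that $\lambda$ is a Rayleigh quotient of $\widetilde{A}$ or of $\widetilde{E}$, and then to take the extreme eigenvalues of the two.

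Since $\lambda$ is assumed real, the eigenvector may be taken real. Otherwise we use conjugate transposes; because $\widetilde{A}$ and $\widetilde{E}$ are real symmetric, the quotients below are still real. We split into cases.

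\textbf{Case $x\neq 0$.} Multiply the first equation by $x^*$ to get
\begin{equation*}
\lambda = \frac{x^*\widetilde{A}x}{x^*x}.
\end{equation*}
Hence $\lambda\in[\gamma^A_{\min},\gamma^A_{\max}]$.

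\textbf{Case $x=0$.} Then $y\neq 0$, and the same argument applied to the second equation gives $\lambda\in[\gamma^E_{\min},\gamma^E_{\max}]$.

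In either case, $\lambda$ lies in the union of the two intervals. That union is contained in $[\min\{\gamma^A_{\min},\gamma^E_{\min}\},\max\{\gamma^A_{\max},\gamma^E_{\max}\}]$, which is the claimed bound.

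A natural alternative is the weighted combination
\begin{equation*}
\lambda=\frac{x^*\widetilde{A}x+y^*\widetilde{E}y}{x^*x+y^*y},
\end{equation*}
obtained by adding the two equations multiplied by $x^*$ and $y^*$. It yields the same bound in one step.

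The only point needing care is excluding $x=y=0$, which is immediate from $\phi\neq 0$. The third equation, $\bar{B}x+\bar{C}^Ty=0$, is not needed. No step here is a real obstacle; the argument is essentially a Rayleigh-quotient observation.
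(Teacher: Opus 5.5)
Your proof is correct and follows the paper's argument: setting $z=0$ decouples the system into $\widetilde{A}x=\lambda x$ and $\widetilde{E}y=\lambda y$ with $(x;y)\neq 0$, so $\lambda$ is an eigenvalue (equivalently a Rayleigh quotient) of $\widetilde{A}$ or of $\widetilde{E}$. The bound then follows immediately from the extreme eigenvalues of these two matrices.
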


\begin{proof}
    If $z=0$, then \eqref{eq:eigfunction_inexact1}-\eqref{eq:eigfunction_inexact3} reduce to
    $\widetilde{A}x =\lambda x$, $\widetilde{E}y=\lambda y$, and $\bar{B}x+\bar{C}^Ty=0$. Therefore, for any nonzero pair $(x;y)$ satisfying the last equality above, $\lambda$ is an eigenvalue of either $\widetilde{A}$ or $\widetilde{E}$, from which the desired result follows.
\end{proof}

\begin{lem}\label{lem:bound2}
		Under the same assumptions as in \Cref{lem:bound1}, if $0\neq z \in \nul(\bar{C})$, then $\gamma_{\min}^B / \gamma_{\max}^A \le \lambda \le \gamma^A_{\max}$.
	\end{lem}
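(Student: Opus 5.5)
The plan is to reduce the system \eqref{eq:eigfunction_inexact1}--\eqref{eq:eigfunction_inexact3} to a two-block saddle-point eigenproblem in $(x;z)$ and then read off bounds from Rayleigh-quotient identities. The upper bound and positivity come out cleanly; the lower bound is the step I expect to be hardest, and I flag below where the argument may need more than I have.

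\emph{Reduction.} Since $\lambda$ is real, take the eigenvector $(x;y;z)$ real. The hypothesis $0\neq z\in\nul(\bar{C})$ gives $\bar{C}z=0$, so \eqref{eq:eigfunction_inexact2} becomes $\widetilde{E}y=\lambda y$. Multiplying \eqref{eq:eigfunction_inexact3} by $z^T$ gives $z^T\bar{C}^Ty=(\bar{C}z)^Ty=0$, and therefore
\begin{equation*}
z^T\bar{B}x=-\lambda\|z\|^2 .
\end{equation*}
Next, $x\neq 0$. Indeed, if $x=0$, then \eqref{eq:eigfunction_inexact1} gives $\bar{B}^Tz=0$. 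But $\bar{B}^T=\widehat{A}^{-\frac12}B^TQ^{-\frac12}$ has full column rank because $B$ has full row rank, so $z=0$, a contradiction.

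\emph{Upper bound.} Multiply \eqref{eq:eigfunction_inexact1} by $x^T$ and substitute the identity above. This yields
\begin{equation*}
x^T\widetilde{A}x=\lambda\bigl(\|x\|^2+\|z\|^2\bigr),
\qquad\text{so}\qquad
\lambda=\gamma_A\,\frac{\|x\|^2}{\|x\|^2+\|z\|^2},
\end{equation*}
with $\gamma_A=r(\widetilde{A},x)$. This gives at once $0<\lambda<\gamma_A\le\gamma^A_{\max}$.

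\emph{Lower bound, case $\lambda<\gamma^A_{\min}$.} Here $\widetilde{A}-\lambda I$ is SPD. From \eqref{eq:eigfunction_inexact1}, $x=-(\widetilde{A}-\lambda I)^{-1}\bar{B}^Tz$. Inserting this into $z^T\bar{B}x=-\lambda\|z\|^2$ gives
\begin{equation*}
z^T\bar{B}(\widetilde{A}-\lambda I)^{-1}\bar{B}^Tz=\lambda\|z\|^2 .
\end{equation*}
Bounding $(\widetilde{A}-\lambda I)^{-1}\succeq(\gamma^A_{\max}-\lambda)^{-1}I$ and $\|\bar{B}^Tz\|^2\ge\gamma^B_{\min}\|z\|^2$ yields $\lambda(\gamma^A_{\max}-\lambda)\ge\gamma^B_{\min}$. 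Hence
\begin{equation*}
\lambda\gamma^A_{\max}\ge\gamma^B_{\min}+\lambda^2\ge\gamma^B_{\min},
\end{equation*}
which is the claimed lower bound.

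\emph{Lower bound, case $\lambda\ge\gamma^A_{\min}$ (main obstacle).} Here the claim would follow from $\gamma^A_{\min}\ge\gamma^B_{\min}/\gamma^A_{\max}$, but that inequality is not implied by the hypotheses. A direct argument is therefore needed. The natural route is to work with $\widetilde{A}^{-1}$. Apply $\bar{B}\widetilde{A}^{-1}$ to \eqref{eq:eigfunction_inexact1}, take the inner product with $z$, and use $z^T\bar{B}x=-\lambda\|z\|^2$ together with $z^T\bar{B}\widetilde{A}^{-1}x=\lambda x^T\widetilde{A}^{-1}x-\|x\|^2$. This gives
\begin{equation*}
z^T\bar{B}\widetilde{A}^{-1}\bar{B}^Tz=\lambda\bigl(\|z\|^2+\lambda\,x^T\widetilde{A}^{-1}x-\|x\|^2\bigr).
\end{equation*}
The left side is at least $\gamma^B_{\min}\|z\|^2/\gamma^A_{\max}$. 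It remains to show that the bracket is at most $\|z\|^2$, i.e.\ $\lambda\,x^T\widetilde{A}^{-1}x\le\|x\|^2$. I would try to obtain this by combining the identity $\lambda(\|x\|^2+\|z\|^2)=x^T\widetilde{A}x$ with the Cauchy--Schwarz-type inequality $(x^Tx)^2\le(x^T\widetilde{A}x)(x^T\widetilde{A}^{-1}x)$. I am not confident this closes. If it does not, I would fall back on the cruder estimate
\begin{equation*}
\sqrt{\gamma^B_{\min}}\,\|z\|\le\|(\lambda I-\widetilde{A})x\|\le\gamma^A_{\max}\|x\|,
\end{equation*}
which holds because $0<\lambda\le\gamma^A_{\max}$. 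Substituting it into the expression for $\lambda$ above gives only $\lambda\ge\gamma^A_{\min}\gamma^B_{\min}/\bigl(\gamma^B_{\min}+(\gamma^A_{\max})^2\bigr)$. This is weaker than the stated bound, so the fallback would leave this case open.
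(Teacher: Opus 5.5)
Your upper bound and your treatment of the case $\lambda<\gamma^A_{\min}$ are correct. They are also more careful than the paper's argument in one respect: the relations $z^T\bar Bx=-\lambda\|z\|^2$ and $\lambda(\|x\|^2+\|z\|^2)=x^T\widetilde Ax$ hold whatever $y$ is, so you never need to discard $\lambda\in{\rm sp}(\widetilde A)\cup{\rm sp}(\widetilde E)$, as the paper does. The case you flag, $\lambda\ge\gamma^A_{\min}$, is a genuine gap, but no argument can close it, because the stated lower bound is false there. Take $\widehat A=I$, $\widehat E=I$, $Q=I$ (so $\widetilde A=A$, $\bar B=B$, $\bar C=C$, $\widetilde E=E$), with $n=4$, $m=3$, $p=2$ and
\[
A=\mathrm{diag}\bigl(\tfrac{1}{15},1,2,2\bigr),\quad E=\mathrm{diag}\bigl(\tfrac12,2\bigr),\quad B=\begin{pmatrix}\frac{1}{10}&\frac35&0&0\\ 0&0&1&0\\ 0&0&0&1\end{pmatrix},\quad C=\begin{pmatrix}0&1&0\\ 0&0&1\end{pmatrix}.
\]
Set $x=(3,-\tfrac23,0,0)^T$, $y=0$, $z=e_1$ and $\lambda=\tfrac1{10}$. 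Then $(\lambda I-\widetilde A)x=(\tfrac1{10},\tfrac35,0,0)^T=\bar B^Tz$, $\bar Cz=0$, and $\bar Bx=(-\tfrac1{10},0,0)^T=-\lambda z$, so \eqref{eq:eigfunction_inexact1}--\eqref{eq:eigfunction_inexact3} hold. All standing assumptions are met, including $\gamma^A_{\min}<1<\gamma^A_{\max}$ and $\gamma^E_{\min}<1<\gamma^E_{\max}$, and $0\neq z\in\nul(\bar C)$. Yet $\bar B\bar B^T=\mathrm{diag}(\tfrac{37}{100},1,1)$ gives $\gamma^B_{\min}/\gamma^A_{\max}=\tfrac{37}{200}>\tfrac1{10}=\lambda$. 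Your sufficient condition also fails here: $\lambda\,x^T\widetilde A^{-1}x=\tfrac{1219}{90}>\tfrac{85}{9}=\|x\|^2$.

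The paper's proof reaches the false bound through an unjustified step. It writes $u^T(\lambda I-\widetilde A)^{-1}u/u^Tu=1/(\lambda-\gamma_A)$ with $\gamma_A\in[\gamma^A_{\min},\gamma^A_{\max}]$. That is legitimate only when $\lambda\notin[\gamma^A_{\min},\gamma^A_{\max}]$. In that regime $(\lambda I-\widetilde A)^{-1}$ is definite, and its Rayleigh quotient is an average of the values $1/(\lambda-\mu)$, $\mu\in{\rm sp}(\widetilde A)$. The continuous monotone map $\gamma\mapsto 1/(\lambda-\gamma)$ attains every such average on that interval. This is exactly the regime your definite-case argument already covers, since your identity rules out $\lambda>\gamma^A_{\max}$. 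In the example the quotient equals $-\tfrac{10}{37}$, which forces $\gamma_A=3.8>\gamma^A_{\max}=2$. The same example also breaks the lower bound of \Cref{theorem:inexact1}: since $\gamma^C_{\min}=0$, one gets $\rho_l=\tfrac{37}{200}$, while $\gamma^E_{\min}=\tfrac12$. What your two cases actually prove is the correct statement $\min\{\gamma^A_{\min},\,\gamma^B_{\min}/\gamma^A_{\max}\}\le\lambda\le\gamma^A_{\max}$, with the second case trivial. You should state and prove that version rather than pursue the Cauchy--Schwarz route or the fallback estimate.
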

	
	\begin{proof}
        It suffices to consider the case $\lambda\notin{\rm sp}(\widetilde{A})\cup{\rm sp}(\widetilde{E})$. From \eqref{eq:eigfunction_inexact1} and \eqref{eq:eigfunction_inexact2} we obtain
		\begin{equation}\label{eq:inexact_relation}
			x = (\lambda I-\widetilde{A})^{-1}\bar{B}^Tz \quad \text{and} \quad y = (1-\lambda)(\lambda I-\widetilde{E})^{-1}\bar{C}z.
		\end{equation}
		Since $z \in \nul(\bar{C})$, by \eqref{eq:inexact_relation}, it gives $y=0$.
        Substituting \eqref{eq:inexact_relation} into \eqref{eq:eigfunction_inexact3} and premultiplying by $z^T/z^Tz$, we have $\tfrac{z^T\bar{B}(\lambda I-\widetilde{A})^{-1}\bar{B}^Tz}{z^Tz} + \lambda = 0$. Setting $u=\bar{B}^Tz$, this relation can be rewritten as $\tfrac{u^T(\lambda I-\widetilde{A})^{-1}u}{u^Tu} \cdot \tfrac{z^T\bar{B}\bar{B}^Tz}{z^Tz} + \lambda = 0.$ Together with \eqref{eq:eig-gamma}, it leads to
		\begin{equation}\label{eq:p_lam}
			\frac{1}{\lambda-\gamma_A}\cdot \gamma_B+\lambda= \frac{\lambda^2-\gamma_A\lambda+\gamma_B}{\lambda-\gamma_A}:=\frac{p(\lambda)}{\lambda-\gamma_A} = 0.
		\end{equation}
		This implies that $\lambda$ is a root of the quadratic polynomial $p(\lambda)$. Since $\lambda$ is real, it follows that $\gamma_A^2 \ge 4\gamma_B$ and $\lambda = \frac{\gamma_A \pm \sqrt{\gamma_A^2-4\gamma_B}}{2}$. Consequently,
        \begin{equation}\label{eq:lam_1}
            \lambda \le \frac{\gamma_A+\sqrt{\gamma_A^2-4\gamma_B}}{2} \le \frac{\gamma_{\max}^A+\sqrt{(\gamma_{\max}^A)^2-4\gamma_{\min}^B}}{2} \le \gamma_{\max}^A,
            \end{equation}
            %\lambda^p_+(\gamma_{\max}^A ,\gamma_{\min}^B)
        and
    \begin{equation}\label{eq:lam_2}
            \lambda  \ge \frac{\gamma_A-\sqrt{\gamma_A^2-4\gamma_B}}{2} = \frac{2\gamma_B}{\gamma_A+\sqrt{\gamma^2_A-4\gamma_B}} 
            \ge \frac{2\gamma_B}{\gamma_A+\gamma_A} \ge \frac{\gamma_{\min}^B}{\gamma_{\max}^A},
    \end{equation}
following the result.
\end{proof}

%%%%%%%%%%%%%%%%%%%%%%%%%%%
We now consider the last case and first introduce some properties of the following cubic polynomial:
    \begin{equation}\label{eq:pi}
        \pi(\lambda) = \gamma_B(\lambda-\gamma_E)+(1-\lambda)(\lambda-\gamma_A)\gamma_C+\lambda(\lambda-\gamma_A)(\lambda-\gamma_E).
    \end{equation}
    Observe that $\pi(\lambda)$ can be rewritten in the form              \begin{align}\label{eq:relation_poly}
		\pi(\lambda) & = \lambda^3 - (\gamma_A+\gamma_C+\gamma_E)\lambda^2 + (\gamma_B+\gamma_C+\gamma_A\gamma_C+\gamma_A\gamma_E)\lambda - \gamma_A\gamma_C-\gamma_B\gamma_E \nonumber \\
		& = \lambda^3 - (\gamma_A+\gamma_C)\lambda^2 + (\gamma_B+\gamma_C+\gamma_A\gamma_C)\lambda - \gamma_A\gamma_C-\gamma_E p(\lambda) \nonumber\\
		& = \lambda^3 -\gamma_A
		\lambda^2 + \gamma_B \lambda-\gamma_E p(\lambda) - [\gamma_C \lambda^2-(\gamma_C+\gamma_A\gamma_C)\lambda+\gamma_A\gamma_C] \nonumber\\
		& = (\lambda-\gamma_E)p(\lambda)+\gamma_C(1-\lambda)(\lambda-\gamma_A),
    \end{align}
where $p(\lambda)$ is given in \eqref{eq:p_lam}. This shows that any real root of $\pi(\lambda)=0$ must satisfy $\lambda>0$. Otherwise, if $\lambda\le 0$, then, by the positive definiteness of $\widetilde{A}$ and $\widetilde{E}$ and \eqref{eq:eig-gamma}, we have $\lambda-\gamma_E<0$, $1-\lambda>0$, and $\lambda-\gamma_A<0$. Combining these inequalities with $p(\lambda)>0$ for $\lambda\le 0$ yields $\pi(\lambda)<0$, contradicting the fact that $\lambda$ is a root of $\pi(\lambda)$.
    
\begin{comment}
    Hence, if $\pi(\lambda)$ has three real roots, we denote them, in nondecreasing order, by 
$$ 0 < \mu_a(\gamma_A,\gamma_B,\gamma_C,\gamma_E) \le \mu_b(\gamma_A,\gamma_B,\gamma_C,\gamma_E)\le\mu_u(\gamma_A,\gamma_B,\gamma_C,\gamma_E),$$ 
abbreviated as $0<\mu_a\le\mu_b\le\mu_u$ whenever no ambiguity arises. If $\pi(\lambda)$ has one real root and a pair of complex conjugate roots, we denote the real root by $\mu(\gamma_A,\gamma_B,\gamma_C,\gamma_E) > 0$, abbreviated as $\mu>0$ if no ambiguity. According to \eqref{eq:relation_poly}, we obtain
    \begin{equation*}
        \lim_{\lambda \rightarrow -\infty} \pi ( \lambda )  =-\infty  \quad  \quad \lim_{\lambda \rightarrow +\infty} \pi ( \lambda )  =+\infty, \quad \text{and} \quad \pi(0) = -\gamma_B\gamma_E-\gamma_A\gamma_C < 0.
    \end{equation*}
\end{comment}

We denote the minimum real root of $\pi(\lambda)$ by $\mu_ l(\gamma_A,\gamma_B,\gamma_C,\gamma_E)$ and the maximum real root by $\mu_u(\gamma_A,\gamma_B,\gamma_C,\gamma_E)$; for brevity, we use $0< \mu_l \le \mu_u $ when there is no ambiguity. According to \eqref{eq:relation_poly}, we obtain $\lim_{\lambda \rightarrow -\infty} \pi ( \lambda )  =-\infty$, $\lim_{\lambda \rightarrow +\infty} \pi ( \lambda )  =+\infty$, and $\pi(0) = -\gamma_B\gamma_E-\gamma_A\gamma_C < 0$.

We recall the following result from \citet{bergamaschi2025spectral}, whose proof is based on the KKT conditions of an associated optimization problem.
	
\begin{lem}\cite[Lemma 2.2]{bergamaschi2025spectral}\label{lem:optim}
		Let $q(\lambda; \gamma)$ be a polynomial in $\lambda$ that depends on the parameter $\gamma = (\gamma_1, \cdots, \gamma_d)$, and $\gamma_j \in [\,\gamma_{\min}^j, \, \gamma_{\max}^j\,]$, $j = 1, \cdots, d$. If $\lambda$ satisfies $q(\lambda; \gamma)=0$, partial derivative $\frac{\partial q}{\partial \lambda}(\lambda; \gamma) \neq 0$, and $\lambda$ is a local extremum, then precisely one of the three cases below is valid:
		\begin{enumerate}[label=(\arabic*)]
			\item $\delta \frac{\partial q}{\partial \gamma_j}(\lambda; \gamma) \ge 0$ and $\gamma_j = \gamma_{\min}^j$,\\
			\item $\delta \frac{\partial q}{\partial \gamma_j}(\lambda; \gamma) \le 0$ and $\gamma_j = \gamma_{\max}^j$,\\
			\item $\delta \frac{\partial q}{\partial \gamma_j}(\lambda; \gamma) = 0$ and $\gamma_j \in (\gamma_{\min}^j, \gamma_{\max}^j)$,
		\end{enumerate}
		where $\delta \in \left\{ -1,1 \right\} $ is defined by 
		$
		\delta =\begin{cases}
			-\operatorname{sgn} \left( \frac{\partial q(\lambda ;\gamma)}{\partial \lambda} \right) ,& \text{if $\lambda$ is a local minimum},\\
			+\operatorname{sgn} \left( \frac{\partial q(\lambda ;\gamma )}{\partial \lambda} \right) ,&	\text{if $\lambda$ is a local maximum}.
		\end{cases}
		$
\end{lem}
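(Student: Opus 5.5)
The approach is to reduce the statement to first-order optimality conditions for a smooth function on a box. Since $\frac{\partial q}{\partial \lambda}(\lambda;\gamma)\neq 0$ at the given point, the implicit function theorem (a polynomial is $C^\infty$) gives a neighbourhood $U$ of $\gamma$ and a unique smooth function $\lambda(\cdot)$ on $U$ with $\lambda(\gamma)=\lambda$ and $q(\lambda(\gamma');\gamma')=0$ for $\gamma'\in U$. Its partial derivatives are
\begin{equation*}
\frac{\partial \lambda}{\partial \gamma_j}(\gamma) = -\frac{\frac{\partial q}{\partial \gamma_j}(\lambda;\gamma)}{\frac{\partial q}{\partial \lambda}(\lambda;\gamma)}
= -\operatorname{sgn}\!\Big(\frac{\partial q}{\partial \lambda}\Big)\,\frac{\frac{\partial q}{\partial \gamma_j}(\lambda;\gamma)}{\big|\frac{\partial q}{\partial \lambda}(\lambda;\gamma)\big|}.
\end{equation*}
I read ``$\lambda$ is a local extremum'' as: $\gamma$ is a local minimizer or maximizer of this root branch $\lambda(\cdot)$ over the box $\prod_j[\gamma_{\min}^j,\gamma_{\max}^j]$, restricted to $U$.

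Next I apply the first-order (KKT) conditions for a smooth function on a box, one coordinate at a time. Fix all coordinates except $\gamma_j$. The one-variable function $t\mapsto\lambda(\gamma_1,\dots,t,\dots,\gamma_d)$ then has a local extremum on $[\gamma_{\min}^j,\gamma_{\max}^j]$ at $t=\gamma_j$. By elementary calculus, for a \emph{local minimum}:
\begin{itemize}
\item if $\gamma_j=\gamma_{\min}^j$, then $\partial\lambda/\partial\gamma_j\ge 0$;
\item if $\gamma_j=\gamma_{\max}^j$, then $\partial\lambda/\partial\gamma_j\le 0$;
\item if $\gamma_j$ is interior, then $\partial\lambda/\partial\gamma_j=0$.
\end{itemize}
For a local maximum the two inequalities are reversed. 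Each claim follows from the sign of the one-sided difference quotient.

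Then I translate these conditions through the derivative formula. Since $|\partial q/\partial\lambda|>0$, in the minimum case $\operatorname{sgn}(\partial\lambda/\partial\gamma_j)=\operatorname{sgn}(\delta\,\partial q/\partial\gamma_j)$ with $\delta=-\operatorname{sgn}(\partial q/\partial\lambda)$. This gives cases (1)--(3) verbatim. In the maximum case, $\partial\lambda/\partial\gamma_j=-\delta\,(\partial q/\partial\gamma_j)/|\partial q/\partial\lambda|$ with $\delta=+\operatorname{sgn}(\partial q/\partial\lambda)$. The reversed inequalities then become again $\delta\,\partial q/\partial\gamma_j\ge0$ at $\gamma_{\min}^j$, $\le0$ at $\gamma_{\max}^j$, and $=0$ in the interior. ``Precisely one'' holds because the three cases are distinguished by the location of $\gamma_j$ (lower endpoint, upper endpoint, interior), and these are mutually exclusive when $\gamma_{\min}^j<\gamma_{\max}^j$. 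In the degenerate case $\gamma_{\min}^j=\gamma_{\max}^j$, the coordinate is fixed and I would simply treat it as a constant.

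The main obstacle is conceptual rather than computational. The key point is that nonvanishing $\partial q/\partial\lambda$ is exactly what allows the root to be treated as a well-defined smooth function of the parameters, so that ``local extremum of $\lambda$'' has meaning. I will also take care with the sign bookkeeping in $\delta$ so that the minimum and maximum cases collapse to a single statement.
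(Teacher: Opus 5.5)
Your proof is correct, and the sign bookkeeping checks out in both the minimum and the maximum case. The paper gives no proof of its own; it cites \cite{bergamaschi2025spectral} and says only that the argument rests on the KKT conditions of an associated optimization problem. That problem is to extremize $\lambda$ subject to $q(\lambda;\gamma)=0$ and the box constraints on $\gamma$.

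In that formulation $\lambda$ stays a variable and gets a multiplier $\mu$ for the equality constraint. Stationarity in $\lambda$ gives $\mu=\mp 1/(\partial q/\partial\lambda)$, so $\operatorname{sgn}(\mu)=\delta$. Stationarity in $\gamma_j$, with complementarity for the box multipliers, then yields cases (1)--(3). The hypothesis $\partial q/\partial\lambda\neq 0$ acts as the constraint qualification.

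You instead eliminate the equality constraint with the implicit function theorem. Your factor $-1/(\partial q/\partial\lambda)$ in $\partial\lambda/\partial\gamma_j$ is, up to the min/max sign convention, exactly that multiplier. Complementarity is replaced by one-sided difference quotients taken coordinate by coordinate.

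Your route buys two things:
\begin{enumerate}
\item It is more elementary and self-contained, since it needs no constraint-qualification theory.
\item It makes precise what ``$\lambda$ is a local extremum'' means. By the uniqueness part of the implicit function theorem, the zero set of $q$ near $(\lambda,\gamma)$ is the graph of your branch $\lambda(\cdot)$. So your reading coincides with local optimality in the constrained problem.
\end{enumerate}
The KKT formulation treats all coordinates at once and needs no explicit root branch.

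Both routes tacitly need $q$ to be $C^1$ jointly in $(\lambda,\gamma)$, not merely polynomial in $\lambda$. This holds for the cubic $\pi$ to which the lemma is applied. Both also need $\gamma_{\min}^j<\gamma_{\max}^j$ for ``precisely one'' to hold, as you noted.
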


\begin{lem}\label{lem:bound3}
		Under the same assumptions as in \Cref{lem:bound1}, if $z \notin \nul(\bar{C})$, then $\lambda\in\left[ \, \rho_l,\, \rho_u \, \right ]$, where
        \begin{align}
            & \rho_l := \min \Bigl\{\,    \frac{\gamma_{\min}^B}{\gamma_{\max}^A} ,\, \mu_l( \gamma _{\max}^{A},\gamma _{\min}^{B},\gamma _{\min}^{C},\gamma _{\max}^{E}),\,  \mu_l( \gamma _{\max}^{A},\gamma _{\min}^{B},\gamma _{\min}^{C},\gamma _{\min}^{E} ) \, \Bigl\}, \label{eq:bound_lower} \\
            & \rho_u:=\max \bigl\{ \, \gamma_{\max}^{A}, \, \mu _u( \gamma _{\min}^{A},\gamma _{\min}^{B},\gamma _{\max}^{C},\gamma _{\max}^{E} ), \, \mu _u( \gamma _{\min}^{A},\gamma _{\min}^{B},\gamma _{\max}^{C},\gamma _{\min}^{E} ) \,\bigl\} \label{eq:bound_upper}.
        \end{align}
	\end{lem}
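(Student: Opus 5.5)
First we derive the cubic. As in the proof of \Cref{lem:bound2}, we may assume $\lambda\notin{\rm sp}(\widetilde{A})\cup{\rm sp}(\widetilde{E})$; if $\lambda$ lies in either spectrum, it already belongs to $[\min\{\gamma_{\min}^A,\gamma_{\min}^E\},\max\{\gamma_{\max}^A,\gamma_{\max}^E\}]$, and this case has to be checked against $[\rho_l,\rho_u]$ separately (see the last paragraph). Otherwise \eqref{eq:inexact_relation} holds. Substitute $x$ and $y$ into \eqref{eq:eigfunction_inexact3}, premultiply by $z^T/z^Tz$, and set $u=\bar B^Tz$ and $v=\bar Cz\neq 0$. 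Each term then splits into a Rayleigh quotient times a norm ratio, exactly as in the proof of \Cref{lem:bound2}. We obtain
\[
\frac{\gamma_B}{\lambda-\gamma_A}+\frac{(1-\lambda)\gamma_C}{\lambda-\gamma_E}+\lambda=0,
\]
with $\gamma_B=z^T\bar B\bar B^Tz/z^Tz\in[\gamma_{\min}^B,\gamma_{\max}^B]$ and $\gamma_C=z^T\bar C^T\bar Cz/z^Tz\in(0,\gamma_{\max}^C]$. Since $z\notin\nul(\bar C)$, $\gamma_C>0$, but it can be smaller than $\gamma_{\min}^C$ when $p<m$, because $\bar C^T\bar C$ is singular; I would treat $\gamma_C\in[\gamma_{\min}^C,\gamma_{\max}^C]$ as in the paper's notation. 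Here $\gamma_A$ and $\gamma_E$ are Rayleigh quotients of the resolvent-weighted vectors, so they lie in the respective spectral intervals. Multiplying through by $(\lambda-\gamma_A)(\lambda-\gamma_E)$ gives $\pi(\lambda)=0$, with $\pi$ as in \eqref{eq:pi}. By the discussion after \eqref{eq:relation_poly}, every real root is positive.

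Next we bound the real roots of $\pi$ over the box of parameters, using \Cref{lem:optim} with $q=\pi$. We compute the partials:
\[
\partial_{\gamma_A}\pi=-(1-\lambda)\gamma_C-\lambda(\lambda-\gamma_E),\quad
\partial_{\gamma_B}\pi=\lambda-\gamma_E,\quad
\partial_{\gamma_C}\pi=(1-\lambda)(\lambda-\gamma_A),\quad
\partial_{\gamma_E}\pi=-\gamma_B-\lambda(\lambda-\gamma_A).
\]
At an extremal root we fix the sign of $\partial\pi/\partial\lambda$: it is positive at the largest real root and at the smallest one, since $\pi\to\pm\infty$ and $\pi(0)<0$; the degenerate zero-derivative case is handled by continuity. \Cref{lem:optim} then forces each $\gamma_j$ to an endpoint unless the corresponding partial vanishes.

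For the maximum, $\delta=+1$. If $\lambda>\max\{\gamma_{\max}^A,1\}$, then $\partial_{\gamma_C}\pi<0$, forcing $\gamma_C=\gamma_{\max}^C$. The case $\lambda\le\gamma_{\max}^A$ is already covered by the bound. For $\gamma_B$, the sign of $\lambda-\gamma_E$ is undetermined, which is exactly why both $\gamma_{\max}^E$ and $\gamma_{\min}^E$ appear in \eqref{eq:bound_upper}; one shows that $\gamma_B=\gamma_{\min}^B$ gives the larger root, or falls back on the $\gamma_{\max}^A$ branch. For $\gamma_A$, one argues that decreasing $\gamma_A$ increases the root, so $\gamma_A=\gamma_{\min}^A$. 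The interior case $\partial_{\gamma_E}\pi=0$ means $\lambda(\lambda-\gamma_A)=-\gamma_B$, i.e.\ $p(\lambda)=0$. By \eqref{eq:relation_poly} this forces $\gamma_C(1-\lambda)(\lambda-\gamma_A)=0$, so $\lambda=1$ or $\lambda=\gamma_A$, and the bound follows from $\gamma_{\max}^A>1$. The minimum is handled symmetrically with $\delta=-1$, giving $\gamma_A=\gamma_{\max}^A$, $\gamma_B=\gamma_{\min}^B$, $\gamma_C=\gamma_{\min}^C$, and $\gamma_E$ at either endpoint. The leftover case, where the sign analysis fails because $\lambda$ lies between $\gamma_A$ and $1$, is covered by the root of $p$, i.e.\ the bound $\gamma_{\min}^B/\gamma_{\max}^A$ from \eqref{eq:lam_2}.

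The main obstacle is the sign bookkeeping for $\partial_{\gamma_A}\pi$ and $\partial_{\gamma_B}\pi$, whose signs depend on where $\lambda$ sits relative to $1$, $\gamma_A$ and $\gamma_E$. I would organize it as a case split on $\lambda\gtrless1$ and $\lambda\gtrless\gamma_{\max}^A$, and in each case either pin the parameter to the stated endpoint or show that $\lambda$ is already dominated by $\gamma_{\max}^A$ (respectively $\gamma_{\min}^B/\gamma_{\max}^A$). Finally, the excluded case $\lambda\in{\rm sp}(\widetilde A)\cup{\rm sp}(\widetilde E)$ must be checked against $\rho_l$ and $\rho_u$ directly, since spectral values of $\widetilde{E}$ are not obviously dominated by $\gamma_{\max}^A$.
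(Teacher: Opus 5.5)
Your overall route is the paper's: reduce to the cubic $\pi$, then apply \Cref{lem:optim} at the extremal real roots. But the step that makes \Cref{lem:optim} produce the configurations in \eqref{eq:bound_lower}--\eqref{eq:bound_upper} is missing. That step is fixing the signs of $\partial_{\gamma_A}\pi$ and $\partial_{\gamma_B}\pi$ at the root. You flag it as ``the main obstacle'' and then only assert the outcome (``one argues that decreasing $\gamma_A$ increases the root'', ``one shows that $\gamma_B=\gamma^B_{\min}$ gives the larger root'').

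Your explicit form $\partial_{\gamma_A}\pi=(\lambda-1)\gamma_C-\lambda(\lambda-\gamma_E)$ has no visible sign. Worse, your claim that $\lambda-\gamma_E$ has undetermined sign undercuts $\gamma_A=\gamma^A_{\min}$, because at a root with $\lambda>\gamma_A$ the sign of $\partial_{\gamma_A}\pi$ equals the sign of $\lambda-\gamma_E$.

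The paper closes this with two ingredients.
\begin{itemize}
\item[(i)] A reduction to the regime $\mu_u>\max\{\gamma_A,\gamma_E,1\}$, respectively $\mu_l<\min\{\gamma_A,\gamma_E,1\}$. The complementary cases are dismissed with the bounds $\max\{\gamma^A_{\max},\gamma^E_{\max}\}$ and $\min\{\gamma^A_{\min},\gamma^E_{\min}\}$, appealing to \Cref{lem:bound1} and $\gamma^A_{\min}<1<\gamma^A_{\max}$. This fixes the signs of $\partial_{\gamma_B}\pi=\lambda-\gamma_E$ and $\partial_{\gamma_C}\pi$.
\item[(ii)] The identity $\pi(\lambda)=(\gamma_A-\lambda)\,\partial_{\gamma_A}\pi+\gamma_B(\lambda-\gamma_E)$. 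At a root it gives $\partial_{\gamma_A}\pi=\gamma_B(\gamma_E-\lambda)/(\gamma_A-\lambda)>0$ in both regimes. Hence $\gamma_A=\gamma^A_{\min}$ for $\mu_u$ (where $\delta=1$) and $\gamma_A=\gamma^A_{\max}$ for $\mu_l$ (where $\delta=-1$).
\end{itemize}
In your own upper regime the sign of $\lambda-\gamma_E$ is not actually in doubt. If $\lambda>\gamma_A$ and $\lambda>1$, then $p(\lambda)=\lambda(\lambda-\gamma_A)+\gamma_B>0$, and \eqref{eq:relation_poly} with $\pi(\lambda)=0$ gives $(\lambda-\gamma_E)p(\lambda)=\gamma_C(\lambda-1)(\lambda-\gamma_A)>0$. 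So $\lambda>\gamma_E$.

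Some of your bookkeeping is also misattributed.
\begin{itemize}
\item The two $\gamma_E$ endpoints in $\rho_l$ and $\rho_u$ come from the paper leaving the sign of $\partial_{\gamma_E}\pi=-p(\lambda)$ open, not from $\partial_{\gamma_B}\pi$. In the upper regime $-p(\lambda)<0$, so only $\gamma^E_{\max}$ is really needed there.
\item The entry $\gamma^B_{\min}/\gamma^A_{\max}$ in $\rho_l$ does not cover a ``leftover'' region $\gamma_A<\lambda<1$. A root of $\pi$ there is in general not a root of $p$, so \eqref{eq:lam_2} says nothing about it. In the paper that entry arises from the interior alternative $p(\mu_l)=0$ of \Cref{lem:optim}, handled by \Cref{lem:bound2}. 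Roots with $\mu_l\ge\min\{\gamma_A,\gamma_E,1\}$ are instead discarded by the reduction (i).
\item Your treatment of the interior $\gamma_E$ alternative in the upper case is fine: $p(\lambda)=0$ forces $\lambda\in\{1,\gamma_A\}$.
\item $\gamma_C$ can never drop below $\gamma^C_{\min}$. When $p<m$ one simply has $\gamma^C_{\min}=\lambda_{\min}(\bar C^T\bar C)=0$, and a Rayleigh quotient stays in $[\gamma^C_{\min},\gamma^C_{\max}]$.
\end{itemize}
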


	\begin{proof}
    By substituting \eqref{eq:inexact_relation} into \eqref{eq:eigfunction_inexact3} and premultiplying by $z^T/(z^T z)$, we obtain
		\begin{equation*}
			\frac{z^T\bar{B}(\lambda I-\widetilde{A})^{-1}\bar{B}^Tz}{z^Tz} +(1-\lambda)\frac{z^T\bar{C}^T(\lambda I - \widetilde{E})^{-1}\bar{C}z}{z^Tz} + \lambda = 0.
		\end{equation*}
		Let $u = \bar{B}^T z$ and $v = \bar{C} z$. The above equation can be rewritten as
		\begin{equation*}
			\frac{u^T(\lambda I-\widetilde{A})^{-1}u}{u^Tu} \cdot \frac{z^T\bar{B}\bar{B}^Tz}{z^Tz} +(1-\lambda)\frac{v^T(\lambda I - \widetilde{E})^{-1}v}{v^Tv}\cdot \frac{z^T\bar{C}^T\bar{C}z}{z^Tz} + \lambda = 0,
		\end{equation*}
		that is, $\frac{\gamma_B}{\lambda-\gamma_A}+(1-\lambda)\frac{\gamma_C}{\lambda-\gamma_E}+\lambda =: \frac{\pi(\lambda)}{(\lambda-\gamma_A)(\lambda-\gamma_E)}=0$, where $\pi(\lambda)$ is defined in \eqref{eq:pi}. Hence, $\lambda$ is a real root of $\pi(\lambda)$, which implies $\lambda > 0$. We now derive bounds for $\lambda$ in three steps.

\textbf{Step I: Upper bounds for $\lambda$ when $\pi(\lambda)$ has three real roots.} By \Cref{lem:bound1}, it is sufficient to restrict our attention to the case $\mu_u > \gamma_A$, $\mu_u > \gamma_E$, and $\mu_u > 1$. Indeed, otherwise, $\mu_u \le \max \{ \gamma _{\max}^{A}, \gamma _{\max}^{E} \}$ under the assumption $\gamma _{\max}^{A}>1$. Furthermore, it follows from \eqref{eq:relation_poly} that
\begin{align*}
			\pi(\lambda)&=(\lambda-\gamma_E)p(\lambda)+\gamma_C(1-\lambda)(\lambda-\gamma_A)\\
            &=(\lambda-\gamma_E)(\lambda^2-\gamma_A\lambda+\gamma_B)+\gamma_C(1-\lambda)(\lambda-\gamma_A)\\
			& =\lambda(\lambda-\gamma_E)(\lambda-\gamma_A)+\gamma_B(\lambda-\gamma_E)+\gamma_C(1-\lambda)(\lambda-\gamma_A)\\
			& = (\lambda-\gamma_A)[\lambda(\lambda-\gamma_E) + \gamma_C(1-\lambda)]+\gamma_B(\lambda-\gamma_E)
			 = (\gamma_A-\lambda)\frac{\partial \pi}{\partial \gamma _A}+\gamma_B(\lambda-\gamma_E).
\end{align*}
This implies that
{\small\begin{equation}\label{eq:partial}
			\frac{\partial \pi}{\partial \gamma _A} \!=\! \frac{\pi(\lambda)+\gamma_B(\gamma_E-\lambda)}{\gamma_A-\lambda}, \quad \frac{\partial \pi}{\partial \gamma _B} \!=\! \lambda - \gamma_E, \quad \frac{\partial \pi}{\partial \gamma _C} \!=\! (1-\lambda)(\lambda-\gamma_A), \quad \frac{\partial \pi}{\partial \gamma _E} \!=\! -p(\lambda).
		\end{equation}}
		Therefore, we can easily obtain 
		\begin{equation}\label{eq:mu_c_2}
		    \begin{aligned}
			\frac{\partial \pi}{\partial \gamma_A}\bigl(\mu_u\bigr) & = \frac{\gamma_B(\gamma_E-\mu_u)}{\gamma_A-\mu_u}> 0, &
			\frac{\partial \pi}{\partial \gamma _B}\bigl(\mu_u\bigr) &= \mu_u - \gamma_E >0,\\
			\frac{\partial \pi}{\partial \gamma _C}\bigl(\mu_u\bigr) &= (1-\mu_u)(\mu_u-\gamma_A) <0, & \frac{\partial \pi}{\partial \gamma _E}\bigl(\mu_u\bigr) &= -p(\mu_u).
		\end{aligned}
		\end{equation}
If $p(\mu_u)=0$, i.e., $\mu_u$ is a root of $p(\lambda)$, then it follows from \Cref{lem:bound2} that $\mu_u \le \gamma_{\max}^A$. Otherwise, $p(\mu_u)\neq 0$. Since $\mu_u$ is the largest real root of the cubic polynomial $\pi(\lambda)$, we get $\frac{\partial \pi}{\partial \lambda}\bigl(\mu_u\bigr) >0$. This, together with \eqref{eq:mu_c_2} and \Cref{lem:optim}, yields that $\mu_u \le \max \bigl\{\,\mu_u(\gamma^A_{\min},\gamma^B_{\min},\,\gamma^C_{\max},\gamma^E_{\max}),\mu_u(\gamma^A_{\min},\gamma^B_{\min},\gamma^C_{\max},\gamma^E_{\min})\,\bigl\}$. Combining this two cases, we obtain
        \begin{equation}\label{eq:mu_c}
            \begin{aligned}
                \mu_u \le \max \bigl\{ & \, \gamma_{\max}^{A}, \, \mu_u(\gamma^A_{\min},\gamma^B_{\min},\gamma^C_{\max},\gamma^E_{\max}),   \, \mu_u(\gamma^A_{\min},\gamma^B_{\min},\gamma^C_{\max},\gamma^E_{\min})\, \bigl\}.
            \end{aligned}
        \end{equation}

\textbf{Step II: Lower bounds for $\lambda$ when $\pi(\lambda)$ has three real roots.} Likewise, in view of \Cref{lem:bound1}, it suffices to consider the case where $\mu_l < \gamma_A$, $\mu_l < \gamma_E$, and $\mu_l < 1$. Otherwise, since $\gamma _{\min}^{A}<1$, we have $\mu_l \ge \min \{  \gamma _{\min}^{A}, \, \gamma _{\min}^{E} \}$, which already provides the desired lower bound. Then from \eqref{eq:partial}, it leads to
		\begin{equation}\label{eq:mu_a_2}
		    \begin{aligned}
			\frac{\partial \pi}{\partial \gamma_A}\bigl(\mu_l\bigr) & = \frac{\gamma_B(\gamma_E-\mu_l)}{\gamma_A-\mu_l}> 0, &
			\frac{\partial \pi}{\partial \gamma _B}\bigl(\mu_l\bigr) &= \mu_l - \gamma_E <0,\\
			\frac{\partial \pi}{\partial \gamma _C}\bigl(\mu_l\bigr) &= (1-\mu_l)(\mu_l-\gamma_A) <0, & \frac{\partial \pi}{\partial \gamma _E}\bigl(\mu_l\bigr) &= -p(\mu_l).
		\end{aligned}
		\end{equation}
If $p(\mu_l)=0$, then \Cref{lem:bound2} gives $\mu_l \ge \gamma_{\min}^B/\gamma_{\max}^A$. On the other hand, if $p(\mu_l)\neq0$, then, by the fact that $\mu_l$ is the smallest real root of $\pi(\lambda)$, we have $\frac{\partial \pi}{\partial \lambda}\bigl(\mu_l\bigr)>0$. Therefore, \Cref{lem:optim} and \eqref{eq:mu_a_2} yield $\mu_l \ge \min \bigl \{\,\mu_l(\gamma^A_{\max},\gamma^B_{\min},\gamma^C_{\min},\gamma^E_{\max}),\,\mu_l(\gamma^A_{\max},\gamma^B_{\min},\gamma^C_{\min},\gamma^E_{\min})\, \bigl\}$. By considering both cases, we arrive at
        \begin{equation}\label{eq:mu_a}
                \mu_l \ge \min \Bigl \{  \, \frac{\gamma_{\min}^B}{\gamma_{\max}^A}, \,\mu_l(\gamma^A_{\max},\gamma^B_{\min},\gamma^C_{\min},\gamma^E_{\max}),\,\mu_l(\gamma^A_{\max},\gamma^B_{\min},\gamma^C_{\min},\gamma^E_{\min})\,  \Bigl \}.
        \end{equation}

\textbf{Step III: Lower and upper bounds for $\lambda$ when $\pi(\lambda)$ has one real root and a pair of complex conjugate roots.} In this case, we have $\mu_l = \mu_u$. Proceeding as in Steps I and II, the unique real root satisfies
    \begin{align*}
        & \min  \Bigl \{ \,  \frac{\gamma_{\min}^B}{\gamma_{\max}^A}, \, \mu_l(\gamma^A_{\max},\gamma^B_{\min},\gamma^C_{\min},\gamma^E_{\max}),\,\mu_l(\gamma^A_{\max},\gamma^B_{\min},\gamma^C_{\min},\gamma^E_{\min})\, \Bigl \} \le \mu_l = \mu_u\\
        & \le \max \bigl\{ \, \gamma_{\max}^{A}, \, \mu_u(\gamma^A_{\min},\gamma^B_{\min},\gamma^C_{\max},\gamma^E_{\max}),   \, \mu_u(\gamma^A_{\min},\gamma^B_{\min},\gamma^C_{\max},\gamma^E_{\min})\, \bigl\}.
    \end{align*}
This along with \eqref{eq:mu_c} and \eqref{eq:mu_a} follows the result.
\end{proof}

By combining \Cref{lem:bound1,lem:bound2,lem:bound3}, we obtain the following result.

\begin{theorem}\label{theorem:inexact1}
		Assume that $A$ and $E$ are SPD, and that $B$ and $C$ have full row rank. For any given SPD matrices $\widehat{A}$, $\widehat{E}$ and $Q$, all the real eigenvalues of $\calP^{-1}\calA$ lie in the interval
		$\left[ \, \min\{\rho_l,\,\gamma_{\min}^E\},\, \max\{\rho_u,\,\gamma_{\max}^E\} \, \right ]$, where $\rho_l$ and $\rho_u$ are defined in \eqref{eq:bound_lower} and \eqref{eq:bound_upper}, respectively.
	\end{theorem}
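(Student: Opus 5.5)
The theorem follows by a case split on the $z$-component of the eigenvector, using \Cref{lem:bound1,lem:bound2,lem:bound3}. Let $\lambda$ be a real eigenvalue of $\calP^{-1}\calA$ with eigenvector $\zeta$. Set $\phi=\mathcal{D}^{\frac12}\zeta=(x;y;z)\neq 0$, so that \eqref{eq:eigfunction_inexact1}--\eqref{eq:eigfunction_inexact3} hold. Every nonzero $z$ either lies in $\nul(\bar{C})$ or does not, so the three cases $z=0$, $0\neq z\in\nul(\bar{C})$, and $z\notin\nul(\bar{C})$ are exhaustive. It therefore suffices to show that the interval from each lemma is contained in $\bigl[\min\{\rho_l,\gamma^E_{\min}\},\,\max\{\rho_u,\gamma^E_{\max}\}\bigr]$.

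\textbf{Case $z=0$.} \Cref{lem:bound1} gives $\lambda\ge\min\{\gamma^A_{\min},\gamma^E_{\min}\}$ and $\lambda\le\max\{\gamma^A_{\max},\gamma^E_{\max}\}$.

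For the upper end, $\gamma^A_{\max}\le\rho_u$ by \eqref{eq:bound_upper}. Hence $\max\{\gamma^A_{\max},\gamma^E_{\max}\}\le\max\{\rho_u,\gamma^E_{\max}\}$.

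For the lower end, we need $\min\{\gamma^A_{\min},\gamma^E_{\min}\}\ge\min\{\rho_l,\gamma^E_{\min}\}$. If the minimum on the left is $\gamma^E_{\min}$, this is immediate. Otherwise $\lambda$ is an eigenvalue of $\widetilde A$ with eigenvector $x$, and $\gamma^A_{\min}\ge\rho_l$ must be checked, using $\rho_l\le\gamma^B_{\min}/\gamma^A_{\max}$. I would argue as follows. If $z=0$ and $x\neq0$, then \eqref{eq:eigfunction_inexact3} gives $\bar{B}x=-\bar{C}^Ty$. The stated lower bound should include $\gamma^A_{\min}$ implicitly; I expect this to follow because $\min\{\gamma^A_{\min},\gamma^E_{\min}\}$ is how \Cref{lem:bound1} is absorbed in Steps I--II of \Cref{lem:bound3}. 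There, roots with $\mu_l\ge\gamma_A$ are dismissed using $\gamma^A_{\min}<1$.

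This comparison, namely that the $z=0$ eigenvalues coming from ${\rm sp}(\widetilde A)$ lie in the stated interval, is the step I expect to be the main obstacle. I would try first to show $\rho_l\le\gamma^A_{\min}$ directly. Since $\rho_l$ is the least real root $\mu_l$ of $\pi$ at the extreme parameters, evaluate $\pi(\gamma_A)=\gamma_B(\gamma_A-\gamma_E)$ at $\gamma_A=\gamma^A_{\max}$, $\gamma_E=\gamma^E_{\max}$. If $\gamma^A_{\max}\le\gamma^E_{\max}$, this value is nonpositive, and combined with the sign of $\pi$ at larger arguments it locates a root. If instead the comparison fails, I would fall back on reading the theorem as the union of the three lemma intervals. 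In that reading the $\gamma^A$ bounds are already subsumed as the proof of \Cref{lem:bound3} asserts, and only the $\widetilde{E}$ contributions from \Cref{lem:bound1} are genuinely new. This explains why only $\gamma^E_{\min}$ and $\gamma^E_{\max}$ are appended to $\rho_l$ and $\rho_u$.

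\textbf{Case $0\neq z\in\nul(\bar C)$.} \Cref{lem:bound2} gives $\gamma^B_{\min}/\gamma^A_{\max}\le\lambda\le\gamma^A_{\max}$. By \eqref{eq:bound_lower} and \eqref{eq:bound_upper}, $\rho_l\le\gamma^B_{\min}/\gamma^A_{\max}$ and $\gamma^A_{\max}\le\rho_u$, so $\lambda\in[\rho_l,\rho_u]$.

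\textbf{Case $z\notin\nul(\bar C)$.} \Cref{lem:bound3} gives $\lambda\in[\rho_l,\rho_u]$ directly.

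Taking the union of the three intervals gives the stated bound.
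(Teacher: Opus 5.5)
\textbf{There is a genuine gap, and it cannot be closed: the theorem is false as stated.} Your case split matches the paper's proof, which is only the sentence that the theorem follows by combining \Cref{lem:bound1,lem:bound2,lem:bound3}. Your handling of the cases $0\neq z\in\nul(\bar C)$ and $z\notin\nul(\bar C)$, and of the upper end for $z=0$, is correct. The step you single out, however, is not merely hard. The inequality $\gamma^A_{\min}\ge\min\{\rho_l,\gamma^E_{\min}\}$ fails in general.

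The relation $\bar Bx=-\bar C^Ty$ you wrote down shows why. If $y\neq0$, then $\lambda\in{\rm sp}(\widetilde E)$ and so $\lambda\ge\gamma^E_{\min}$. If $y=0$, it only forces $x\in\nul(\bar B)$, which is nontrivial whenever $n>m$. Then every eigenvalue of $\widetilde A$ with an eigenvector in $\nul(\bar B)$ is a real eigenvalue of $\calP^{-1}\calA$, and nothing ties it to $\rho_l$.

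Concretely, take $n=3$, $m=p=2$, $\widehat A=I$, $\widehat E=I$, $Q=I$, $A={\rm diag}(0.01,1.5,2)$, $E={\rm diag}(0.9,2)$, $B=(0\ \ I_2)\in\mathbb{R}^{2\times3}$, and $C=I_2$. All hypotheses hold, including $\gamma^A_{\min}<1<\gamma^A_{\max}$ and $\gamma^E_{\min}<1<\gamma^E_{\max}$. With $e_1$ the first unit vector of $\mathbb{R}^3$, we get $\calA(e_1;0;0)=(0.01e_1;0;0)=0.01\,\calP(e_1;0;0)$, so $0.01$ is an eigenvalue. On the other hand, $\gamma^B_{\min}=\gamma^C_{\min}=1$ and $\gamma^A_{\max}=2$, so $\gamma^B_{\min}/\gamma^A_{\max}=1/2$. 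The cubic is $\pi(\lambda)=(\lambda-1)(\lambda-2)^2$ for $\gamma_E=\gamma^E_{\max}=2$, and $\pi(\lambda)=(\lambda-1)(\lambda^2-2.9\lambda+2.9)$ for $\gamma_E=\gamma^E_{\min}=0.9$. Both smallest real roots equal $1$. Hence $\rho_l=1/2$ and $\min\{\rho_l,\gamma^E_{\min}\}=1/2>0.01$.

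\textbf{Neither of your fallbacks can work.} Evaluating $\pi$ at $\lambda=\gamma^A_{\max}$ (with $\gamma_A=\gamma^A_{\max}$, $\gamma_E=\gamma^E_{\max}$) gives $\gamma^B_{\min}(\gamma^A_{\max}-\gamma^E_{\max})$. A nonpositive value there only locates a root at or above $\gamma^A_{\max}$. That is information about the largest root, not a proof that the smallest root lies below $\gamma^A_{\min}$; in the example, $\pi(2)=0$ and $2$ is the largest root.

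The literal union of the three lemma intervals has lower endpoint $\min\{\gamma^A_{\min},\gamma^E_{\min},\gamma^B_{\min}/\gamma^A_{\max},\rho_l\}=\min\{\rho_l,\gamma^A_{\min},\gamma^E_{\min}\}$. So $\gamma^A_{\min}$ is not absorbed anywhere. The ``otherwise'' branch of Step II in the proof of \Cref{lem:bound3} that you appeal to yields $\mu_l\ge\min\{\gamma^A_{\min},\gamma^E_{\min}\}$. That reintroduces $\gamma^A_{\min}$ rather than eliminating it. The paper's one-line proof drops this term in the same way.

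What the three lemmas together support is that all real eigenvalues lie in $\bigl[\min\{\rho_l,\gamma^A_{\min},\gamma^E_{\min}\},\,\max\{\rho_u,\gamma^E_{\max}\}\bigr]$. Your final sentence should assert this corrected interval rather than the stated one.
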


\subsection{Bounds for nonreal eigenvalues}\label{sec:spectral-2}

In this subsection, we focus on the distribution of the complex eigenvalues of $\calP^{-1}\calA$ with nonzero imaginary parts. 

Recall the matrices $\widetilde{A}=\widehat{A}^{-\frac{1}{2}}A\widehat{A}^{-\frac{1}{2}}$, $\widetilde{E}=\widehat{E}^{-\frac{1}{2}}E\widehat{E}^{-\frac{1}{2}}$, $\bar{B}=Q^{-\frac{1}{2}}B\widehat{A}^{-\frac{1}{2}}$, and $\bar{C}=\widehat{E}^{-\frac{1}{2}}CQ^{-\frac{1}{2}}$. Then, by \eqref{eq:double saddle} and \eqref{eq:pre_inexact}, one readily verifies that
		 \begin{equation*}
		 	\calP^{-1}\calA = \left( \begin{matrix}
		 		\hat{A}^{-1}A&		0&		\hat{A}^{-1}B^T\\
		 		\hat{E}^{-1}CQ^{-1}B&		\hat{E}^{-1}E+\hat{E}^{-1}CQ^{-1}C^T&		\hat{E}^{-1}C\\
		 		-Q^{-1}B&		-Q^{-1}C^T&		0\\
		 	\end{matrix} \right),
		 \end{equation*}
which is similar to $N = \left( \begin{smallmatrix}
		 		\widetilde{A}&		0&		\bar{B}^T\\
		 		\bar{C}\bar{B}&		\widetilde{E}+\bar{C}\bar{C}^T&		\bar{C}\\
		 		-\bar{B}&		-\bar{C}^T&		0\\
		 	\end{smallmatrix} \right)
            =N_1+N_2$
with
	\begin{equation}\label{eq:N1 and N2}
		N_1=\left( \begin{matrix}
			\widetilde{A}&		\frac{1}{2}\bar{B}^T\bar{C}^T&		0\\
			\frac{1}{2}\bar{C}\bar{B}&		\widetilde{E}+\bar{C}\bar{C}^T&		0\\
			0&		0&		0\\
		\end{matrix} \right) \quad \text{and} \quad   N_2=\left( \begin{matrix}
			0&		-\frac{1}{2}\bar{B}^T\bar{C}^T&		\bar{B}^T\\
			\frac{1}{2}\bar{C}\bar{B}&		0&		\bar{C}\\
			-\bar{B}&		-\bar{C}^T&		0\\
		\end{matrix} \right).
	\end{equation}
Clearly, $N_1$ and $N_2$ are the symmetric and skew-symmetric parts of $N$, respectively. We therefore proceed by studying the spectral properties of $N_1$ and $N_2$. To this end, we first recall Weyl's inequality.

\begin{lem} \cite[Theorem 4.3.1]{horn2012matrix}\label{lem:Weyl}
    Let $A, B \in \mathbb{C}^{n \times n}$ be Hermitian matrices, and let their eigenvalues be ordered nonincreasingly, i.e., $\lambda_1(\cdot) \ge \lambda_2(\cdot) \ge \cdots \ge \lambda_n(\cdot)$. Then, for every $k = 1, 2, \dots, n$, $\lambda_k(A) + \lambda_n(B) \le \lambda_k(A + B) \le \lambda_k(A) + \lambda_1(B)$.
\end{lem}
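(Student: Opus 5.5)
We prove Weyl's inequality directly from the Courant--Fischer min-max characterization of the eigenvalues of a Hermitian matrix. For a Hermitian $H\in\mathbb{C}^{n\times n}$ with eigenvalues ordered nonincreasingly, this characterization states that
\begin{equation*}
\lambda_k(H)=\max_{\dim \mathcal{S}=k}\ \min_{0\neq x\in\mathcal{S}} \frac{x^*Hx}{x^*x}, \qquad k=1,\dots,n,
\end{equation*}
where the maximum runs over all $k$-dimensional subspaces $\mathcal{S}\subseteq\mathbb{C}^n$. We take this as known; if needed, it follows from the spectral decomposition $H=U\Lambda U^*$ together with a dimension-counting argument. The other ingredient is the elementary Rayleigh quotient bound
\begin{equation*}
\lambda_n(B)\le \frac{x^*Bx}{x^*x}\le \lambda_1(B) \qquad \text{for every } x\neq 0,
\end{equation*}
which holds for any Hermitian $B$.

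For the upper bound, we fix an arbitrary $k$-dimensional subspace $\mathcal{S}$. For every nonzero $x\in\mathcal{S}$ we have
\begin{equation*}
\frac{x^*(A+B)x}{x^*x}=\frac{x^*Ax}{x^*x}+\frac{x^*Bx}{x^*x}\le \frac{x^*Ax}{x^*x}+\lambda_1(B).
\end{equation*}
Taking the minimum over $x\in\mathcal{S}$ on both sides preserves the inequality. The additive constant $\lambda_1(B)$ passes through both the minimum and the subsequent maximum over $\mathcal{S}$. Maximizing over all $k$-dimensional $\mathcal{S}$ and applying Courant--Fischer to $A+B$ and to $A$ then gives $\lambda_k(A+B)\le\lambda_k(A)+\lambda_1(B)$.

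The lower bound follows in the same way from $x^*Bx/x^*x\ge\lambda_n(B)$, which gives $\lambda_k(A+B)\ge\lambda_k(A)+\lambda_n(B)$. Alternatively, we apply the upper bound to the decomposition $A=(A+B)+(-B)$ and use $\lambda_1(-B)=-\lambda_n(B)$.

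There is no real obstacle in this argument. The only point needing care is that the inequality must hold pointwise in $x$ on each fixed subspace before the min and max are taken, so that both operations preserve it. Because the added term is a constant independent of $x$ and $\mathcal{S}$, this is immediate. If a self-contained proof of Courant--Fischer were also required, that would be the main step. There the key point is that any $k$-dimensional subspace meets the span of the last $n-k+1$ eigenvectors nontrivially, which holds because $k+(n-k+1)>n$.
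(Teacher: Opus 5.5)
Your proof is correct: the pointwise bound $x^*Bx/x^*x\le\lambda_1(B)$ (resp.\ $\ge\lambda_n(B)$) survives both the minimum over $\mathcal{S}$ and the maximum over $k$-dimensional subspaces, and Courant--Fischer applied to $A+B$ and to $A$ then gives both inequalities. The paper gives no proof and only cites Horn--Johnson, where the more general index-shifted Weyl inequalities are proved by the same Rayleigh-quotient and dimension-counting idea, taking a unit vector in the intersection of three eigenvector spans instead of using Courant--Fischer as a black box; for the special case stated here, which involves only the extreme eigenvalues of $B$, your route is the most direct.
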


    	\begin{lem}\label{lem:imag_1}
		Under the same assumptions of \Cref{theorem:inexact1}, the eigenvalues of $N_1$ are either zero or lie in the interval $[\, \omega_l,\, \omega_u \,]$ with $\omega_l= \min \left\{ \gamma _{\min}^{A}, \gamma _{\min}^{E}+\gamma _{\min}^{C} \right\} -\frac{1}{2}\sigma _{\max}(\bar{B}^T\bar{C}^T)$ and $\omega_u=\max \left\{ \gamma _{\max}^{A}, \gamma _{\max}^{E}+\gamma _{\max}^{C} \right\} +\frac{1}{2}\sigma _{\max}(\bar{B}^T\bar{C}^T)$.
		%where $\sigma _{\max}(\bar{B}^T\bar{C}^T)$ is the largest singular value of $\bar{B}^T\bar{C}^T$.
	\end{lem}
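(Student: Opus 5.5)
Since the third block row and column of $N_1$ vanish, $N_1$ is block diagonal: a zero block of order $m$, which accounts for the eigenvalue $0$, and the symmetric $(n+p)\times(n+p)$ matrix
\begin{equation*}
M=\left( \begin{matrix}
\widetilde{A}& \frac{1}{2}\bar{B}^T\bar{C}^T\\
\frac{1}{2}\bar{C}\bar{B}& \widetilde{E}+\bar{C}\bar{C}^T
\end{matrix} \right).
\end{equation*}
So it suffices to show that every eigenvalue of $M$ lies in $[\omega_l,\omega_u]$. I will split $M=M_d+M_o$, where $M_d={\rm diag}(\widetilde{A},\,\widetilde{E}+\bar{C}\bar{C}^T)$ and $M_o$ is the symmetric off-diagonal part, and then apply Weyl's inequality (\Cref{lem:Weyl}) with $k$ equal to the first and last index.

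\textbf{Diagonal part.} The spectrum of $M_d$ is the union of ${\rm sp}(\widetilde{A})$ and ${\rm sp}(\widetilde{E}+\bar{C}\bar{C}^T)$. For the second block, Weyl's inequality gives
\begin{equation*}
\lambda_{\min}(\widetilde{E})+\lambda_{\min}(\bar{C}\bar{C}^T)\le\lambda\le\lambda_{\max}(\widetilde{E})+\lambda_{\max}(\bar{C}\bar{C}^T).
\end{equation*}
Since $\bar{C}\bar{C}^T$ and $\bar{C}^T\bar{C}$ have the same nonzero eigenvalues, $\lambda_{\max}(\bar{C}\bar{C}^T)=\gamma_{\max}^C$. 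Hence $\lambda_{\max}(M_d)\le\max\{\gamma^A_{\max},\gamma^E_{\max}+\gamma^C_{\max}\}$.

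\textbf{Off-diagonal part.} The matrix $M_o=\left(\begin{smallmatrix}0&G\\ G^T&0\end{smallmatrix}\right)$ with $G=\frac12\bar{B}^T\bar{C}^T$ has eigenvalues $\pm\sigma_i(G)$ together with zeros. Therefore
\begin{equation*}
\lambda_{\max}(M_o)=\tfrac12\sigma_{\max}(\bar{B}^T\bar{C}^T), \qquad \lambda_{\min}(M_o)=-\tfrac12\sigma_{\max}(\bar{B}^T\bar{C}^T).
\end{equation*}
Applying \Cref{lem:Weyl} to $M=M_d+M_o$ now yields $\lambda_{\max}(M)\le\omega_u$ and $\lambda_{\min}(M)\ge\lambda_{\min}(M_d)-\frac12\sigma_{\max}(\bar{B}^T\bar{C}^T)$.

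\textbf{Main obstacle: the lower bound on the diagonal part.} The issue is the step $\lambda_{\min}(\widetilde{E}+\bar{C}\bar{C}^T)\ge\gamma^E_{\min}+\gamma^C_{\min}$. The matrix $\bar{C}\bar{C}^T$ has size $p\times p$ while $\bar{C}$ is $p\times m$ with $p\le m$. Because $C$ has full row rank, $\bar{C}=\widehat{E}^{-1/2}CQ^{-1/2}$ also has full row rank $p$, so $\bar{C}\bar{C}^T$ is SPD. However, its smallest eigenvalue is $\sigma_{\min}(\bar{C})^2$, and this equals $\lambda_{\min}(\bar{C}^T\bar{C})$ only when $p=m$; for $p<m$, $\bar{C}^T\bar{C}$ is singular and $\gamma^C_{\min}=0$. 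In either case, $\sigma_{\min}(\bar{C})^2\ge\lambda_{\min}(\bar{C}^T\bar{C})=\gamma^C_{\min}$, because the nonzero eigenvalues of $\bar{C}^T\bar{C}$ coincide with those of $\bar{C}\bar{C}^T$ and any remaining ones are zero. So the claimed bound holds, but possibly with slack when $p<m$. Combining this with ${\rm sp}(\widetilde{A})\subset[\gamma^A_{\min},\gamma^A_{\max}]$ gives $\lambda_{\min}(M_d)\ge\min\{\gamma^A_{\min},\gamma^E_{\min}+\gamma^C_{\min}\}$, and hence $\lambda_{\min}(M)\ge\omega_l$, which completes the argument.
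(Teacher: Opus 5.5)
Your proof is correct and follows the paper's argument. The paper likewise splits off the zero block of $N_1$, writes the remaining $2\times 2$ block matrix as its block-diagonal part plus its off-diagonal part, and applies Weyl's inequality (\Cref{lem:Weyl}). Your check that $\lambda_{\min}(\bar{C}\bar{C}^T)=\sigma_{\min}(\bar{C})^2\ge\gamma_{\min}^C$, with $\gamma_{\min}^C=0$ when $p<m$, fills in a step the paper leaves implicit.
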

	
	\begin{proof}
		From \eqref{eq:N1 and N2}, it is clear  that the eigenvalues of $N_1$ are either zero or coincide with those of
		\begin{equation*}
			\left( \begin{matrix}
				\widetilde{A}&		\frac{1}{2}\bar{B}^T\bar{C}^T\\
				\frac{1}{2}\bar{C}\bar{B}&		\widetilde{E}+\bar{C}\bar{C}^T\\
			\end{matrix} \right) = \left( \begin{matrix}
			\widetilde{A}&		0\\
		0 &	\widetilde{E}+\bar{C}\bar{C}^T\\
			\end{matrix} \right)+ \left( \begin{matrix}
			0&		\frac{1}{2}\bar{B}^T\bar{C}^T\\
			\frac{1}{2}\bar{C}\bar{B}&		0\\
			\end{matrix} \right).
		\end{equation*}
		This together with Weyl's inequality in \Cref{lem:Weyl} yields the desired result.
	\end{proof}

    \begin{lem}\label{lem:imag_2}
    Under the same assumptions of \Cref{theorem:inexact1}, every eigenvalue $\lambda({\rm i} N_2)$ of ${\rm i} N_2$ satisfies $\left|\lambda({\rm i} N_2)\right|\le\max\left\{\, \frac{1}{2} \sqrt{\gamma _{\max}^{B}\gamma _{\max}^{C}},\, \sqrt{\gamma _{\max}^{B}},\, \sqrt{\gamma _{\max}^{C}} \,\right \}$.
	\end{lem}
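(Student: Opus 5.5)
Since ${\rm i}N_2$ is Hermitian whenever $N_2$ is real skew-symmetric, its eigenvalues are real and $|\lambda({\rm i}N_2)| = |\lambda(N_2)|$. The plan is to work directly with the eigen-equations of $N_2$ rather than with a crude norm estimate. The reason is that the triangle inequality $\|N_2\|\le \|\text{coupling part}\|+\|\text{saddle part}\|$ gives a \emph{sum} of the three quantities $\tfrac12\sqrt{\gamma^B_{\max}\gamma^C_{\max}}$, $\sqrt{\gamma^B_{\max}}$, $\sqrt{\gamma^C_{\max}}$, whereas the statement asks for their maximum.

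Let $N_2 v = {\rm i}\theta v$ with $\theta\in\mathbb{R}$, $\theta\neq 0$ (the case $\theta=0$ is trivial), and $v=(x;y;z)\neq 0$. The three block equations are
\begin{align*}
&-\tfrac12\bar{B}^T\bar{C}^Ty+\bar{B}^Tz={\rm i}\theta x,\\
&\tfrac12\bar{C}\bar{B}x+\bar{C}z={\rm i}\theta y,\\
&-\bar{B}x-\bar{C}^Ty={\rm i}\theta z.
\end{align*}
The third equation gives $z=-({\rm i}\theta)^{-1}(\bar{B}x+\bar{C}^Ty)$. Substituting this into the first two equations gives a reduced $2\times2$ block problem in $(x;y)$. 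Multiplying the reduced equations by $x^*$ and $y^*$ and adding should give a scalar identity of the form
\[
\theta^2\|(x;y)\|^2=\|\bar{B}x+\bar{C}^Ty\|^2+{\rm i}\theta\,(\text{terms with }\tfrac12\bar{C}\bar{B}).
\]
Taking real and imaginary parts, I expect the real part to express $\theta^2$ as a convex combination of Rayleigh-type quotients. These quotients are bounded by $\gamma^B_{\max}$ (from $\|\bar{B}x\|^2\le\gamma^B_{\max}\|x\|^2$), by $\gamma^C_{\max}$ (from $\|\bar{C}^Ty\|^2\le\gamma^C_{\max}\|y\|^2$), and by $\tfrac14\gamma^B_{\max}\gamma^C_{\max}$ (from the coupling block, since $\|\bar{B}^T\bar{C}^T\|\le\sqrt{\gamma^B_{\max}\gamma^C_{\max}}$).

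I will organize the argument as a case split in the style of \Cref{lem:bound1,lem:bound2,lem:bound3}. In the case $z=0$, only the coupling block acts, and $|\theta|\le\tfrac12\|\bar{B}^T\bar{C}^T\|\le\tfrac12\sqrt{\gamma^B_{\max}\gamma^C_{\max}}$. In the case $y=0$, the problem reduces to a standard skew saddle block in $(x,z)$, giving $\theta^2\le\gamma^B_{\max}$. In the case $x=0$, the analogous reduction gives $\theta^2\le\gamma^C_{\max}$. The general case should then follow from the convex-combination identity, because a convex combination of three quantities is at most their maximum.

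The main obstacle is the general case. The cross terms $x^*\bar{B}^T\bar{C}^Ty$ arising from $\|\bar{B}x+\bar{C}^Ty\|^2$, together with the half-weighted coupling terms, must combine into a genuinely convex combination rather than an additive bound. My first attempt would be to check that these cross terms cancel against the imaginary-part identity coming from the $\tfrac12\bar{C}\bar{B}$ blocks. If they do not cancel, I would instead bound $\theta^2$ by the spectral radius of the $3\times 3$ nonnegative matrix of block norms and compare it with the claimed maximum. That comparison may only yield a slightly weaker constant, and I would then have to revisit whether the stated bound holds as written.
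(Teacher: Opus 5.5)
\textbf{Gap in the general case.} The step that fails is the general case, and it cannot be repaired: the inequality is false as stated. Take $n=m=p=2$, $\widehat{A}=\widehat{E}=Q=B=C=I$ and $A=E=\mathrm{diag}(\tfrac12,2)$. Every hypothesis holds, including $\gamma^A_{\min}<1<\gamma^A_{\max}$ and $\gamma^E_{\min}<1<\gamma^E_{\max}$. Then $\bar{B}=\bar{C}=I$, $\gamma^B_{\max}=\gamma^C_{\max}=1$, and $N_2=S\otimes I_2$ with
\[
S=\begin{pmatrix}0&-\tfrac12&1\\ \tfrac12&0&1\\ -1&-1&0\end{pmatrix},\qquad \det(\lambda I-S)=\lambda^3+\bigl(\tfrac14+1+1\bigr)\lambda .
\]
So ${\rm i}N_2$ has eigenvalues $\pm\tfrac32$, while the claimed bound is $\max\{\tfrac12,1,1\}=1$. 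In this example $\theta^2$ equals the \emph{sum} of the squares of the three quantities. Hence no convex-combination identity of the kind you hoped for exists, and your opening remark about sums was closer to the truth than the target.

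Carrying out your reduction exactly gives
\[
\theta^2\bigl(\|x\|^2+\|y\|^2\bigr)=\|\bar{B}x+\bar{C}^Ty\|^2+\theta\,\Im\bigl(y^*\bar{C}\bar{B}x\bigr).
\]
The first term on the right can only be bounded by $(\gamma^B_{\max}+\gamma^C_{\max})(\|x\|^2+\|y\|^2)$; the example shows the cross term $2\Re(x^*\bar{B}^T\bar{C}^Ty)$ inside it does not cancel. Combined with $|y^*\bar{C}\bar{B}x|\le\tfrac12\sqrt{\gamma^B_{\max}\gamma^C_{\max}}\,(\|x\|^2+\|y\|^2)$, the identity does prove the correct, weaker bound
\[
|\theta|\le\tfrac14\sqrt{\gamma^B_{\max}\gamma^C_{\max}}+\bigl(\tfrac1{16}\gamma^B_{\max}\gamma^C_{\max}+\gamma^B_{\max}+\gamma^C_{\max}\bigr)^{1/2}.
\]
So your fallback instinct, to revisit whether the stated bound holds, was the right one.

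\textbf{The paper's proof has the same gap.} It bounds $|v^*N_2v|$ for $\|v\|=1$ by $M(2\|x\|\|y\|+2\|x\|\|z\|+2\|y\|\|z\|)\le M(\|x\|+\|y\|+\|z\|)^2$, where $M$ is the claimed maximum, and then sets $(\|x\|+\|y\|+\|z\|)^2=1$. In fact this quantity equals $1+2(\|x\|\|y\|+\|x\|\|z\|+\|y\|\|z\|)$ and can be as large as $3$. Since $\|x\|\|y\|+\|x\|\|z\|+\|y\|\|z\|\le\|v\|^2$, the argument only yields $|\lambda({\rm i}N_2)|\le 2M$.

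The error carries over to the imaginary-part bound of \Cref{theorem:inexact2}. In the example above, $\calP^{-1}\calA$ splits into $3\times3$ blocks and has eigenvalues $\tfrac34\pm\tfrac{\sqrt{23}}{4}{\rm i}$, with $|\Im(\lambda)|\approx 1.20>1$.
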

	
	\begin{proof}
    For any vector $v=(x;y;z)\in \mathbb{C}^{n+p+m}$ with $\|v\|=1$, by \eqref{eq:eig-max}, we have
        \begin{align*}
                &\left| v^*N_2v \right|  =\left| \frac{1}{2}\left( y^*\bar{C}\bar{B}x-x^*\bar{B}^T\bar{C}^Ty \right) +\left( x^*\bar{B}^Tz-z^*\bar{B}x \right) +\left( y^*\bar{C}z-z^*\bar{C}^Ty \right) \right|\\
                & \le \frac{1}{2}\left| y^*\bar{C}\bar{B}x \right|+\frac{1}{2}\left| x^*\bar{B}^T\bar{C}^Ty \right|+\left| x^*\bar{B}^Tz \right|+\left| z^*\bar{B}x \right|+\left| y^*\bar{C}z \right|+\left| z^*\bar{C}^Ty \right|\\
                & =  \left| y^*\bar{C}\bar{B}x \right|+2\left| z^*\bar{B}x \right|+2\left| y^*\bar{C}z \right|
                \le \| \bar{B} \| \, \| \bar{C} \|\, \| x \|\, \| y \| +2\| \bar{B} \|\, \| x \|\, \| z \| + 2\| \bar{C} \|\, \| y \| \, \| z \| \\
                &\le \max\left\{ \frac{1}{2}\| \bar{B} \| \, \| \bar{C} \|, \, \| \bar{B} \|, \, \| \bar{C} \| \right\} (2\| x \| \, \| y \| + 2\| x \| \, \| z \| +2\| y \| \, \| z \|)\\
                &\le\max\left\{\, \frac{1}{2}\| \bar{B} \| \, \| \bar{C} \| ,\, \| \bar{B} \|,\, \| \bar{C} \| \,\right\} (\| x \| + \| y \| + \| z \|)^2\\
                &=\max\left\{\, \frac{1}{2}\| \bar{B} \| \, \| \bar{C} \|,\, \| \bar{B} \|,\, \| \bar{C} \| \,\right\} = \max\left\{\, \frac{1}{2} \sqrt{\gamma _{\max}^{B}\gamma _{\max}^{C}},\, \sqrt{\gamma _{\max}^{B}},\, \sqrt{\gamma _{\max}^{C}} \,\right \}.
\end{align*}
This along with the fact that ${\rm i} N_2$ is a Hermitian matrix and the Courant-Fischer theorem leads to 
        \begin{equation*}
            \left|\lambda({\rm i} N_2)\right|
            \le \max_{\|v\|=1}\left| v^*({\rm i}N_2)v \right|
            =\max_{\|v\|=1}\left| v^*N_2v \right|
            \le \max\left\{\, \frac{1}{2} \sqrt{\gamma _{\max}^{B}\gamma _{\max}^{C}},\, \sqrt{\gamma _{\max}^{B}},\, \sqrt{\gamma _{\max}^{C}} \,\right \}.
        \end{equation*}
		This completes the proof.	
	\end{proof}

\begin{theorem}\label{theorem:inexact2}
	Under the same assumptions of \Cref{theorem:inexact1}, every nonreal eigenvalue $\lambda$ of $\calP^{-1}\calA$ satisfies
	\begin{equation*}
		\Re{(\lambda)} \in [\,\min \{ \omega_l,0 \},\, \omega_u\,] \quad \text{and} \quad \left| \Im (\lambda ) \right|\le \max\left\{\, \tfrac{1}{2} \sqrt{\scriptstyle\gamma _{\max}^{B}\gamma _{\max}^{C}},\, \sqrt{\scriptstyle\gamma _{\max}^{B}},\, \sqrt{\scriptstyle\gamma _{\max}^{C}} \,\right \},
	\end{equation*}
	where $\omega_l$ and $\omega_u$ are given in \Cref{lem:imag_1}.
\end{theorem}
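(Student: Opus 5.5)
The plan is to use the similarity $\calP^{-1}\calA \sim N = N_1+N_2$ and the standard Bendixson-type argument on Rayleigh quotients. Let $\lambda$ be a nonreal eigenvalue of $\calP^{-1}\calA$. Then $\lambda$ is also an eigenvalue of $N$; let $v\in\mathbb{C}^{n+p+m}$ with $\|v\|=1$ be a corresponding eigenvector. From $Nv=\lambda v$ we get $\lambda = v^*Nv = v^*N_1v + v^*N_2v$. Since $N_1$ is real symmetric, $v^*N_1v$ is real. Since $N_2$ is real skew-symmetric, $v^*N_2v$ is purely imaginary. Hence
\[
\Re(\lambda)=v^*N_1v \quad\text{and}\quad \Im(\lambda)=-{\rm i}\,v^*N_2v = -v^*({\rm i}N_2)v .
\]

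For the real part, I will use the Courant--Fischer characterization: $v^*N_1v\in[\lambda_{\min}(N_1),\lambda_{\max}(N_1)]$. By \Cref{lem:imag_1}, every eigenvalue of $N_1$ is either $0$ or lies in $[\omega_l,\omega_u]$. Therefore $\lambda_{\min}(N_1)\ge\min\{\omega_l,0\}$ and $\lambda_{\max}(N_1)\le\max\{\omega_u,0\}$. Since $\omega_u\ge\gamma^A_{\max}>0$ by assumption, the upper bound is simply $\omega_u$. This gives $\Re(\lambda)\in[\min\{\omega_l,0\},\omega_u]$.

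For the imaginary part, ${\rm i}N_2$ is Hermitian, so $|v^*({\rm i}N_2)v|\le\max|\lambda({\rm i}N_2)|$. \Cref{lem:imag_2} then gives the stated bound on $|\Im(\lambda)|$. Alternatively, one can bound $|v^*N_2v|$ directly, as in the computation inside the proof of \Cref{lem:imag_2}.

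No step is a real obstacle. The only points needing care are two routine checks: that the eigenvector may be taken complex and normalized, and that $0$ in the spectrum of $N_1$ (coming from the zero third block) forces the $\min\{\omega_l,0\}$ in the lower bound but does not affect the upper bound, because $\omega_u>0$.
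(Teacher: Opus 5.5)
Your reduction is the same as the paper's proof: similarity to $N=N_1+N_2$, the Rayleigh quotient of a normalized eigenvector, Courant--Fischer, and then \Cref{lem:imag_1,lem:imag_2}. The real-part half is correct as written. Your remark that $\lambda_{\max}(N_1)\le\max\{\omega_u,0\}=\omega_u$, because $\omega_u\ge\gamma^A_{\max}>0$, is a detail the paper leaves implicit.

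The imaginary-part half does not go through. It rests on \Cref{lem:imag_2}, and that lemma is false, so the stated bound on $|\Im(\lambda)|$ is false too. The error sits in the computation you propose to reuse. Its last equality replaces $(\|x\|+\|y\|+\|z\|)^2$ by $1$, but only $\|x\|^2+\|y\|^2+\|z\|^2=1$ holds. What the chain actually gives is $2(\|x\|\|y\|+\|x\|\|z\|+\|y\|\|z\|)\le 2$, i.e.\ a bound twice as large.

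Here is a counterexample. Take $\widehat{A}=\widehat{E}=Q=I_2$, $B=C=I_2$, and $A=E={\rm diag}(\tfrac12,3)$. All hypotheses hold, including $\gamma^A_{\min}<1<\gamma^A_{\max}$ and $\gamma^E_{\min}<1<\gamma^E_{\max}$. We have $\gamma^B_{\max}=\gamma^C_{\max}=1$, so the claimed bound equals $1$.
\begin{itemize}
\item The matrix $\calP^{-1}\calA$ decouples into two $3\times 3$ blocks. One of them is $\left(\begin{smallmatrix}1/2&0&1\\ 1&3/2&1\\ -1&-1&0\end{smallmatrix}\right)$.
\item Its characteristic polynomial is $(\lambda-\tfrac12)(\lambda^2-\tfrac32\lambda+2)$.
\item Hence $\lambda=\tfrac34\pm\tfrac{\sqrt{23}}{4}{\rm i}$, with $|\Im(\lambda)|\approx 1.199>1$.
\item Already the skew part fails the lemma: $\rho({\rm i}N_2)=\sqrt{\tfrac14+1+1}=\tfrac32$.
\end{itemize}

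What your Bendixson argument legitimately proves is
$|\Im(\lambda)|\le\rho({\rm i}N_2)\le 2\max\{\tfrac12\sqrt{\gamma^B_{\max}\gamma^C_{\max}},\sqrt{\gamma^B_{\max}},\sqrt{\gamma^C_{\max}}\}$.
The constant in the statement cannot be reached by any argument, because it is wrong.
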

	
\begin{proof}
Note that $\calP^{-1}\calA$ is similar to $N$. Thus, $\lambda$ is also an eigenvalue of $N$. Let $\zeta$ be an eigenvector of $N$ corresponding to $\lambda$. Since $N=N_1+N_2$, where $N_1$ is symmetric and $N_2$ is skew-symmetric, it follows that $\Re{(\lambda)} = \frac{\zeta^*N_1\zeta}{\zeta^*\zeta}$ and $\Im{(\lambda)} = -\frac{\zeta^*({\rm i}N_2)\zeta}{\zeta^*\zeta}$. This along with the Courant-Fischer theorem leads to $\lambda_{\min}(N_1) \le \Re{(\lambda)} \le \lambda_{\max}(N_1)$ and $-\lambda_{\max}({\rm i}N_2) \le \Im{(\lambda)} \le -\lambda_{\min}({\rm i}N_2)$.
Combining with \Cref{lem:imag_1,lem:imag_2} completes the proof.
\end{proof}

\begin{rem}
We emphasize that, although the double saddle-point system can be formally rewritten as a $2\times2$ saddle-point system, the spectral analysis presented in this work is not based on the aggregated formulation \eqref{eq:2*2}. The block triangular preconditioner derived from the aggregated system in \cite{simoncini2004block} contains both $B$ and $C$ in its off-diagonal block, whereas the proposed preconditioner preserves the original hierarchical block structure and only involves $C$ in the corresponding block. Consequently, the preconditioned matrices arising from the proposed preconditioner are structurally different from those associated with the classical $2\times2$ saddle-point preconditioners in \cite{simoncini2004block}. Therefore, the existing spectral bounds for standard $2\times2$ saddle-point preconditioners cannot be directly invoked to characterize the proposed preconditioner.
\end{rem}

    \section{Randomized construction of \texorpdfstring{$Q$}{}}\label{sec:select}

As discussed in \Cref{sec:precon}, the matrix $Q$ should provide a good approximation to $BA^{-1}B^T+C^TE^{-1}C$. However, explicitly forming $BA^{-1}B^T+C^TE^{-1}C$ is computationally expensive and may incur prohibitive storage requirements. To address this issue, we propose a randomized strategy for low‑cost construction of $Q$.

Randomized matrix approximation techniques can efficiently capture the essential spectral information of a matrix using substantially fewer degrees of freedom, thereby offering substantial savings in both computational and storage costs \cite{halko2011finding,martinsson2020randomized}. To exploit these advantages, we employ a hybrid strategy that combines diagonal approximation with randomized low-rank approximation to select $Q$. Because $BA^{-1}B^T$ and $C^TE^{-1}C$ share the same algebraic structure, we present the randomized approximation procedure and the corresponding theoretical analysis only for $C^TE^{-1}C$. The low‑cost construction of $BA^{-1}B^T$ follows analogously and is omitted for brevity.
    
Let $E_D = {\rm diag}(E)$ denote the diagonal part of $E$. Then $C^TE^{-1}C 
        = C^TE_D^{-1}C + C^TE^{-1}C-C^TE_D^{-1}C
        = C^TE_D^{-1}C + \Delta_E$, 
where $\Delta_E = C^T(E^{-1}-E_D^{-1})C$ represents the approximation error incurred by replacing
$C^TE^{-1}C$ with $C^TE_D^{-1}C$.
The matrix $C^TE_D^{-1}C$ is computationally inexpensive to construct and apply, but it ignores the off-diagonal information contained in $E$. Although the error matrix $\Delta_E$ captures these neglected effects, its explicit computation is generally prohibitively expensive. To strike a balance between computational efficiency and approximation quality, we seek a computationally tractable approximation $\widehat{\Delta}_E$ to $\Delta_E$. Consequently, we obtain
    \begin{equation}\label{eq:CEC_approx}
		C^TE^{-1}C \approx C^TE_D^{-1}C + \widehat{\Delta}_E.
	\end{equation}

To obtain a computationally efficient approximation of $\Delta_E$, we employ a randomized low-rank approximation technique to extract its dominant spectral information. Let $\Omega \in \mathbb{R}^{m \times k}$ be a sparse sketch matrix\footnote{A sketch matrix is a random projection matrix that maps a high-dimensional matrix into a lower-dimensional space while preserving its essential properties with high probability.}, where $k \ll m$. We begin by forming the sample matrix
	\begin{equation*}
		W = \Delta_E \Omega 
        = C^T\Big[E^{-1}(C\Omega)-E_D^{-1}(C\Omega)\Big]
        =: C^T(Y-Y_D).
	\end{equation*} 
In numerical implementation, $Y$ can be computed approximately by solving the linear systems $EY = C\Omega$ using an incomplete Cholesky factorization, whereas $Y_D$ can be obtained efficiently by exploiting the diagonal structure of $E_D$. Since the columns of $W$ are random samples of the action of $\Delta_E$, the range of $W$ is expected to capture the dominant spectral information of $\Delta_E$. Consequently, an orthonormal basis for ${\rm range}(W)$ can be used to approximate the dominant eigenspace of $\Delta_E$.

To this end, we compute the thin QR factorization $W = VR$, where $V\in\mathbb{R}^{m\times k}$ has orthonormal columns and
$R\in\mathbb{R}^{k\times k}$ is upper triangular. The columns of $V$ form an orthonormal basis for the sampled subspace associated with the dominant spectral components of $\Delta_E$. We then construct a symmetric low-rank approximation of $\Delta_E$ in the form $\widehat{\Delta}_E=VHV^T$, where $H\in\mathbb{R}^{k\times k}$ is chosen such that $\widehat{\Delta}_E$ reproduces the action of $\Delta_E$ on the sampled subspace as accurately as possible. Specifically, we require $VHV^T\Omega \approx W$. This condition ensures that $\widehat{\Delta}_E$ captures the dominant behavior of $\Delta_E$ along the random probing directions represented by $\Omega$. By enforcing the above approximation condition, we obtain $H$ by solving the least-squares problem $\min\limits_{H}\| HV^T\Omega - V^TW\|_F^2$. A direct calculation then leads to the following expression:
\begin{equation}\label{eq:barH}
		H = (V^TW)(\Omega^TW)^{\dagger}(V^TW)^T,
\end{equation}
where $(\cdot)^{\dagger}$ denotes the Moore--Penrose pseudoinverse. In practice, to guarantee invertibility and enhance numerical stability, we compute $H$ via a regularized formulation with a small parameter $\varepsilon>0$, i.e., $H = (V^TW)(\Omega^TW+\varepsilon I)^{-1}(V^TW)^T$. Finally, together with \eqref{eq:CEC_approx}, we obtain 
\begin{equation*}
        C^TE^{-1}C \approx C^TE_D^{-1}C + \widehat{\Delta}_E = C^TE_D^{-1}C+VHV^T.
\end{equation*}
The procedure for approximating $C^T E^{-1} C$ is summarized in \Cref{algorithm:Q}.

\begin{algorithm}[htbp]
	\caption{Randomized Low-Rank Approximation of $C^TE^{-1}C$.}
	\label{algorithm:Q}
	\begin{algorithmic}[1]
		\STATE \textbf{Input:} 
			  $E \in \mathbb{R}^{p \times p}$ is SPD, 
			$C \in \mathbb{R}^{p \times m}$ is full row rank, $k$ is sketch size, and $\varepsilon>0$ is a regularization parameter.

			\STATE 
			\textbf{Form diagonal-based approximation:} Compute $E_D = {\rm diag}(E)$.
			
			\STATE
			\textbf{Draw random sketch matrix:}
			$\Omega \in \mathbb{R}^{m \times k}$.
			
			\STATE 
			\textbf{Form sample matrix $W$:} Compute the incomplete Cholesky factor $E_L$ of $E$, $C_\Omega = C \Omega$, $Y = E_L' \setminus (E_L \setminus C_\Omega)$, $Y_D = E_D^{-1}C_\Omega$, and $W = C^T(Y-Y_D)$.
			
			\STATE 
			\textbf{Compute thin QR decomposition:}
			$[V, \sim] = \text{qr}(W,0)$.
			
			\STATE
			\textbf{Construct low-rank approximation $H$:} Compute $Z = \Omega^TW$, $M = V^T W$, and $H = M(Z+\varepsilon I)^{-1}M^T$.
			
		\STATE \textbf{Output:} 
			$C^TE_D^{-1}C +  VHV^T \approx C^TE^{-1}C $.
	\end{algorithmic}
\end{algorithm}

The approximation of $BA^{-1}B^{T}$ can be carried out in the same manner as in \Cref{algorithm:Q}. Consequently, the construction of $Q$ reduces to the separate approximation of $C^{T}E^{-1}C$ and $BA^{-1}B^{T}$ using a hybrid strategy that combines diagonal and randomized low-rank approximations. This approach significantly reduces the computational cost while preserving approximation accuracy, thereby making it particularly suitable for large-scale sparse problems.

From \Cref{algorithm:Q}, it is evident that the quality of the final approximation to the target matrix $C^TE^{-1}C$ is determined by how accurately the randomized error-correction matrix $\widehat{\Delta}_E = VHV^T$ captures the error matrix $\Delta_E$. Note that $W=\Delta_E\Omega=VR$, giving $V^TW=V^T\Delta_E\Omega=R$. Combining this with \eqref{eq:barH} yields
\begin{equation}\label{eq:Nystrom}
    \begin{aligned}
        \widehat{\Delta}_E & = VHV^T = V(V^TW)(\Omega^TW)^{\dagger}(V^TW)^TV^T = VR(\Omega^TW)^{\dagger}R^TV^T\\
        & = \Delta_E\Omega(\Omega^T\Delta_E\Omega)^{\dagger}(\Delta_E\Omega)^T.
    \end{aligned}
\end{equation}
Thus, our approximation (\Cref{algorithm:Q}) is closely related to the Nystr{\"o}m method \cite{halko2011finding,martinsson2020randomized}, a widely used approach for low-rank approximation of symmetric positive semidefinite (SPSD) matrices. 

While the theoretical properties of the Nystr{\"o}m method in the SPSD setting are well established \cite{halko2011finding,martinsson2020randomized}, these analyses rely heavily on matrix square roots in error decompositions and norm estimates. Consequently, they cannot be directly extended to the present setting, where $\Delta_E$ is symmetric but may be indefinite. To date, theoretical investigations of the Nystr{\"o}m method and its variants for symmetric indefinite matrices remain relatively limited. Relevant developments include approaches based on core-matrix truncation \cite{nakatsukasa2023randomized,cai2022fast} and submatrix shifting \cite{ray2022sublinear}. Inspired by the analytical framework developed in \cite{nakatsukasa2020fast,nakatsukasa2023randomized}, we overcome the limitations of the SPSD theory and establish rigorous error bounds for the approximation produced by \Cref{algorithm:Q}. In particular, we prove that the approximation error is bounded in expectation conditioned on a high-probability event. To this end, we first review some preliminaries.

\begin{lem}\cite[Lemma 2.3.3]{golub2013matrix}\label{lem:Nue}
If a matrix $T$ satisfies $\|T\|<1$, then $I-T$ is nonsingular, and $(I-T)^{-1}=\sum\limits_{i=0}^{\infty}T^i$, with $\|(I-T)^{-1}\|\le \frac{1}{1-\|T\|}$.
\end{lem}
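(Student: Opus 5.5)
The statement is the Neumann series lemma, and I would prove it directly from submultiplicativity of the spectral norm. No earlier result of the paper is needed.

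\emph{Convergence.} First I would show that the partial sums $S_N=\sum_{i=0}^{N}T^i$ form a Cauchy sequence. Since $\|T^i\|\le\|T\|^i$, for $M>N$ we have
\[
\|S_M-S_N\|\le\sum_{i=N+1}^{M}\|T\|^i\le \frac{\|T\|^{N+1}}{1-\|T\|}.
\]
This tends to zero because $\|T\|<1$. By completeness of the finite-dimensional matrix space, $S_N$ converges to a limit $S=\sum_{i=0}^{\infty}T^i$.

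\emph{Identification with the inverse.} Next I would use the telescoping identity $(I-T)S_N=S_N(I-T)=I-T^{N+1}$. Since $\|T^{N+1}\|\le\|T\|^{N+1}\to0$, letting $N\to\infty$ and using continuity of matrix multiplication gives $(I-T)S=S(I-T)=I$. Hence $I-T$ is nonsingular and $(I-T)^{-1}=S$.

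Nonsingularity can also be seen directly. If $(I-T)x=0$ with $x\neq0$, then $\|x\|=\|Tx\|\le\|T\|\,\|x\|<\|x\|$, which is a contradiction.

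\emph{Norm bound.} Finally, I would apply the triangle inequality to the partial sums and pass to the limit, using continuity of the norm:
\[
\|(I-T)^{-1}\|\le\sum_{i=0}^{\infty}\|T\|^i=\frac{1}{1-\|T\|}.
\]

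No step is a real obstacle. The only point needing care is justifying the interchange of the limit with multiplication and with the norm, and both follow from continuity in finite dimensions.
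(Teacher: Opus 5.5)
Your proof is correct and is essentially the standard argument behind the cited result. The paper gives no proof of its own, only the reference to Golub--Van Loan, whose proof uses nonsingularity via $\|x\|=\|Tx\|\le\|T\|\,\|x\|$, the telescoping identity $(I-T)S_N=I-T^{N+1}$ with $\|T^{N+1}\|\le\|T\|^{N+1}\to 0$, and the geometric-series bound. Your explicit Cauchy-sequence step supplies the convergence of the partial sums that the textbook leaves implicit, and the $i=0$ term of your bound correctly uses $\|I\|=1$, which holds for the spectral norm used throughout the paper.
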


\begin{lem}\cite[Theorem 1.1]{rudelson2009smallest}\label{lem:small_singular_gauss}
Let $G\in\mathbb{R}^{m\times n}$ with $m\ge n$ be a random matrix whose entries are independent, mean-zero, sub-Gaussian random variables with unit variance. Then, for every $\varepsilon>0$,        \begin{equation*}
     \mathbb{P}\left( \sigma_{\min}(G) \le \varepsilon\big(\sqrt{m}-\sqrt{n-1}\big) \right)
            \le  (c_1\varepsilon )^{m-n+1}+e^{-c_2m},
\end{equation*}
where $c_1,\,c_2>0$ are constants depending only polynomially on the sub-Gaussian moment parameter.
\end{lem}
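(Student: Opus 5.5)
This is the Rudelson--Vershynin bound, which the paper cites rather than proves. I would follow their geometric strategy. Write $\sigma_{\min}(G)=\min_{x\in S^{n-1}}\|Gx\|$ and split the unit sphere into two parts. The \emph{compressible} vectors are those within distance $\rho$ of a $\delta n$-sparse vector; the \emph{incompressible} vectors are all the others. I would bound the probability that the minimum is small separately on each part. Throughout, I would work on the event $\|G\|\le K\sqrt{m}$. By a standard net argument for sub-Gaussian matrices, this event fails with probability at most $e^{-c m}$, and that failure is what produces the $e^{-c_2 m}$ term in the statement.

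\textbf{Compressible part.} For a fixed unit $x$, each coordinate of $Gx$ is a sum of independent unit-variance sub-Gaussian variables. Such a sum satisfies a small-ball estimate $\mathbb{P}(|\langle g_i,x\rangle|\le c)\le 1-c'$. Tensorizing over the $m$ independent rows gives $\mathbb{P}(\|Gx\|\le c\sqrt{m})\le e^{-c''m}$. Next, take a union bound over an $\epsilon$-net of the sparse unit vectors; this net has cardinality $\binom{n}{\delta n}(3/\epsilon)^{\delta n}$. Extend to all compressible vectors using $\|G\|\le K\sqrt m$. Choosing $\delta,\rho$ small then gives $\inf_{\text{comp}}\|Gx\|\ge c\sqrt m$ with probability $1-e^{-cm}$. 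This comfortably dominates $\varepsilon(\sqrt m-\sqrt{n-1})$ whenever $\varepsilon\lesssim 1$; for larger $\varepsilon$ the claimed bound is trivial once $c_1$ is large enough.

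\textbf{Incompressible part.} An incompressible vector has at least $c n$ coordinates of size at least $c/\sqrt n$. An averaging argument therefore reduces $\|Gx\|$ small to $\mathrm{dist}(X_j,H_j)$ small for many indices $j$. Here $X_j$ denotes the $j$th column and $H_j$ the span of the remaining columns. In the tall case, however, one must not lose the factor $\sqrt m-\sqrt{n-1}$. So instead of using a single distance, I would consider the codimension-$d$ subspace $E=H_j^\perp$, where $d=m-n+1$. The goal is a small-ball bound of the form $\mathbb{P}(\|P_E X\|\le \varepsilon\sqrt{d})\le (C\varepsilon)^d+e^{-cm}$, where $X$ is a column independent of $E$. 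Averaging over $j$ and converting $\sqrt d$ into $\sqrt m-\sqrt{n-1}$ up to constants then yields the stated inequality.

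\textbf{Main obstacle.} The small-ball estimate for $P_E X$ with exponent $d$ is the hard step. A Littlewood--Offord-type inequality gives $(C\varepsilon)^d$ only if the random subspace $E$ has no arithmetic structure. Concretely, its least common denominator $\mathrm{LCD}(E)$ must be exponentially large, at least $e^{c m/d}$. I would prove this by taking a union bound over nets on level sets of the LCD. The nets exploit the fact that vectors with small LCD lie near integer-like lattice points, which makes the nets small. This is combined with the fact that $E$ is the kernel of $G^T$ restricted to the other columns, so every $v\in E$ satisfies $\|G_{-j}^Tv\|=0$. Each such structured $v$ is therefore unlikely to be nearly orthogonal to all $n-1$ independent columns. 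Balancing the net cardinality against this single-vector probability is the delicate calculation, and I would follow the Rudelson--Vershynin level-set decomposition closely there.
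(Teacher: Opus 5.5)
The paper gives no proof of this lemma; it is quoted from Rudelson--Vershynin, so the benchmark is their argument. Your norm event and compressible part agree with it in outline, as does your sketch of the distance theorem (small ball for $P_EX$ via the LCD of $E$, with the LCD of the random subspace controlled by level sets and nets on the kernel of $G_{-j}^T$).

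\textbf{The gap.} It is in the incompressible step, specifically ``converting $\sqrt d$ into $\sqrt m-\sqrt{n-1}$ up to constants.'' First, $E=H_j^\perp$ is $d$-dimensional; it is $H_j$ that has codimension $d$. So $\|P_EX_j\|=\mathrm{dist}(X_j,H_j)$, and you are still using a single column. Then $\|Gx\|\ge|x_j|\,\mathrm{dist}(X_j,H_j)$ with $|x_j|\gtrsim n^{-1/2}$ yields only
\[
\mathbb{P}\Bigl(\inf_{x\ \mathrm{incompressible}}\|Gx\|\le c\,\varepsilon\sqrt{d/n}\Bigr)\lesssim(C\varepsilon)^d+e^{-cm},
\]
whereas $\sqrt m-\sqrt{n-1}=d/(\sqrt m+\sqrt{n-1})\asymp d/\sqrt m$. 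Where the distance theorem is available ($d\le\tilde{c}m$, hence $n\asymp m$), the target threshold exceeds $\varepsilon\sqrt{d/n}$ by a factor $\asymp\sqrt d$, which is not a constant. Absorbing it into $\varepsilon$ turns $(C\varepsilon)^d$ into $(C\varepsilon\sqrt d)^d$. For $d^{-1/2}\lesssim\varepsilon\lesssim 1$ your argument then proves nothing, even though the claimed probability is exponentially small. The single-column reduction suffices only for bounded $d$, e.g.\ the square case.

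\textbf{The missing idea.} This is exactly where the rectangular argument departs from the square one: use $\asymp d$ columns at once.
\begin{enumerate}
\item For incompressible $x$, take $J$ with $|J|\asymp d$ inside the spread set of $x$, so $\|x_J\|\asymp\sqrt{d/n}$.
\item Use $\|Gx\|\ge\mathrm{dist}(G_Jx_J,H_{J^c})$, where $H_{J^c}$ is spanned by the columns outside $J$ and has codimension $d-1+|J|$.
\item Prove a \emph{uniform} distance bound: $\mathrm{dist}(G_Jz,H_{J^c})\ge\varepsilon\sqrt d\,\|z\|$ for all spread $z\in\mathbb{R}^J$ simultaneously, off an event of probability $(C\varepsilon)^d+e^{-cm}$. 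Heuristically, the surplus $|J|-1$ in codimension in the fixed-$z$ distance theorem pays for a union bound over a net of directions in $\mathbb{R}^J$.
\item Since $x$ depends on $G$, average over a random $J$ rather than taking a union over all $J$; this costs only a factor $C^d$.
\end{enumerate}
This gives $\|Gx\|\gtrsim\varepsilon\sqrt d\cdot\sqrt{d/n}\gtrsim\varepsilon(\sqrt m-\sqrt{n-1})$.

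\textbf{Two smaller corrections.}
\begin{itemize}
\item The random-subspace bound must read $\mathrm{LCD}(E)\gtrsim\sqrt m\,e^{cm/d}$, not merely $e^{cm/d}$. The factor $\sqrt m$ is what makes the LCD term $(C\sqrt d/\mathrm{LCD}(E))^d$ at most $e^{-cm}$.
\item Both that bound and the distance theorem require $d\le\tilde{c}m$. The regime $d>\tilde{c}m$ must be handled separately via $\mathbb{P}(\sigma_{\min}(G)\le c\sqrt m)\le e^{-cm}$ for proportionally tall matrices. That suffices because there the term $(c_1\varepsilon)^{m-n+1}$ only matters for $\varepsilon\gtrsim 1$.
\end{itemize}
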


\begin{lem}\cite[Corollary 7.3.3]{vershynin2018high}\label{lem:large_singular_gauss}
Let $G\in\mathbb{R}^{m\times n}$ be a random matrix whose entries are independent standard Gaussian random variables $N(0,1)$. Then, for any $t\ge 0$,
\begin{equation*}
 \mathbb{P}\left\{\|G\| \ge \sqrt{m}+\sqrt{n}+t\right\} \le 2 e^{-c_3t^2},
\end{equation*}
where $c_3>0$ is an absolute constant independent of $m, n, t$.
\end{lem}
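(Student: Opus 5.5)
This is a classical result quoted from \cite{vershynin2018high}. I would prove it in two independent parts: first bound the expectation $\mathbb{E}\|G\|\le\sqrt{m}+\sqrt{n}$, then show that $\|G\|$ concentrates around its mean with a Gaussian tail. Together these give the claim with $c_3=1/2$ and, in fact, without needing the factor $2$.

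\textbf{Step 1 (concentration).} Identify $G$ with a standard Gaussian vector in $\mathbb{R}^{mn}$ and let $f(G)=\|G\|$. Since
\[
\bigl|\,\|G\|-\|G'\|\,\bigr|\le\|G-G'\|\le\|G-G'\|_F,
\]
the function $f$ is $1$-Lipschitz with respect to the Euclidean norm on $\mathbb{R}^{mn}$. The Gaussian concentration inequality for Lipschitz functions (Tsirelson--Ibragimov--Sudakov) then gives
\[
\mathbb{P}\{\|G\|\ge\mathbb{E}\|G\|+t\}\le e^{-t^2/2}\quad\text{for all } t\ge 0.
\]

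\textbf{Step 2 (expectation via Sudakov--Fernique).} Write
\[
\|G\|=\sup_{u\in S^{m-1},\,v\in S^{n-1}}X_{u,v},\qquad X_{u,v}=u^TGv .
\]
Compare this process with $Y_{u,v}=g^Tu+h^Tv$, where $g\sim N(0,I_m)$ and $h\sim N(0,I_n)$ are independent. Both processes are centered Gaussian, and their increments are
\[
\mathbb{E}(X_{u,v}-X_{u',v'})^2=\|uv^T-u'v'^T\|_F^2=2-2ab,\qquad
\mathbb{E}(Y_{u,v}-Y_{u',v'})^2=4-2a-2b,
\]
with $a=u^Tu'$ and $b=v^Tv'$. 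The difference of the second and first quantities is $2(1-a)(1-b)\ge0$, because $|a|,|b|\le1$. The Sudakov--Fernique inequality therefore yields
\[
\mathbb{E}\|G\|\le\mathbb{E}\sup Y=\mathbb{E}\|g\|+\mathbb{E}\|h\|\le\sqrt{m}+\sqrt{n},
\]
where the last step uses Jensen's inequality, $\mathbb{E}\|g\|\le(\mathbb{E}\|g\|^2)^{1/2}=\sqrt m$, and likewise for $h$.

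\textbf{Step 3 (combine).} Inserting Step 2 into Step 1 gives
\[
\mathbb{P}\{\|G\|\ge\sqrt m+\sqrt n+t\}\le e^{-t^2/2}\le 2e^{-c_3t^2}\quad\text{with } c_3=1/2 .
\]

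The main obstacle is the increment comparison in Step 2: one must verify the elementary inequality $(1-a)(1-b)\ge0$ and justify applying Sudakov--Fernique to an index set that is not finite. The latter is handled by taking finite subsets of $S^{m-1}\times S^{n-1}$ and passing to the supremum by monotone convergence, using continuity of $(u,v)\mapsto u^TGv$.

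If a comparison-free argument were preferred, one could instead use an $\varepsilon$-net argument. Take $1/4$-nets of the two spheres, use the Gaussian tail of each fixed $u^TGv$, and apply a union bound. This also gives the bound, but with a worse absolute constant in front of $\sqrt m+\sqrt n$. That version is still sufficient for the subsequent error analysis in the paper.
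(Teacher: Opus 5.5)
Your proof is correct, and it is essentially the standard argument behind the result the paper cites from \cite{vershynin2018high} without proof: Sudakov--Fernique comparing $u^TGv$ with $g^Tu+h^Tv$ gives $\mathbb{E}\|G\|\le\sqrt m+\sqrt n$ (your increment gap $2(1-a)(1-b)\ge 0$ is right), and Gaussian concentration for the $1$-Lipschitz map $G\mapsto\|G\|$ then yields the tail with $c_3=1/2$. Only your closing remark needs correcting: the $\varepsilon$-net route gives $C(\sqrt m+\sqrt n)+t$ with $C>1$, which does not prove the lemma as stated. It would also force a corresponding change in the later condition \eqref{eq:event_con}, so it is not a drop-in replacement.
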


Let $\mathcal{E}$ denote the approximation error of \Cref{algorithm:Q}. From \eqref{eq:CEC_approx} and \eqref{eq:Nystrom}, we have
\begin{equation}\label{eq:err}
         \mathcal{E} = \Delta_E- \widehat{\Delta}_E = \Delta_E-\Delta_E\Omega(\Omega^T\Delta_E\Omega)^{\dagger}(\Delta_E\Omega)^T.
\end{equation}
Let the eigendecomposition of $\Delta_E$ be given by
     \begin{equation}\label{eq:eigendecomp}
    \Delta_E = U\Lambda U^T
    = \left( \begin{matrix}
        U_1 & U_2 
    \end{matrix} \right)
    \left( \begin{matrix}
        \Lambda_1 & 0  \\
        0 & \Lambda_2  \\
    \end{matrix} \right)
    \left( \begin{array}{c}
	    U_{1}^{T}\\
	    U_{2}^{T}\\
    \end{array} \right),
    \end{equation}
where $U\in \mathbb{R}^{m \times m}$ is orthogonal, $\Lambda={\rm diag}\{\lambda_1(\Delta_E),\ldots,\lambda_m(\Delta_E)\}$, with eigenvalues of $\Delta_E$ ordered such that  $|\lambda_1(\Delta_E)|\ge|\lambda_2(\Delta_E)|\ge\ldots\ge|\lambda_m(\Delta_E)|$, and $U_1\in\mathbb{R}^{m \times k}$ and $\Lambda_1\in\mathbb{R}^{k\times k}$ with the sketch size $k$. Throughout the remainder of this section, we assume that $\lambda_k(\Delta_E) \neq 0$ and that $\Omega \in \mathbb{R}^{m \times k}$ is a standard Gaussian random matrix. The condition $\lambda_k(\Delta_E) \neq 0$ ensures that $\Lambda_1$ is nonsingular. Let $\Omega_1=U_1^T\Omega \in \mathbb{R}^{k \times k}$ and $\Omega_2=U_2^T\Omega \in \mathbb{R}^{(m-k) \times k}$. By rotational invariance of Gaussian matrices, $\Omega_1$ and $\Omega_2$ are independent standard Gaussian matrices \cite{halko2011finding}. Moreover, $\Omega_1$ is nonsingular almost surely \cite{tao2023topics,vershynin2018high}.

\begin{lem}\label{lem:dagger}
   Let $K=\Omega_2 \Omega_1^{-1}$ and $\widehat{K}=\Lambda_1+K^T\Lambda_2 K$. Define the event $\Psi = \left\{ \| \left| \Lambda _2 \right|^{\frac{1}{2}}K \| ^{2}\le 0.5\left| \lambda _k\left( \Delta _E \right) \right| \right\}$. Then, conditioned on $\Psi$, both $\widehat{K}$ and $\Omega^{T}\Delta_E\Omega$ are nonsingular almost surely. 
\end{lem}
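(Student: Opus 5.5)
The plan is to factor $\Omega^T\Delta_E\Omega$ through $\Omega_1$ and reduce nonsingularity of $\Omega^T\Delta_E\Omega$ to that of $\widehat{K}$. We then show that $\widehat{K}$ is a small perturbation of the nonsingular diagonal matrix $\Lambda_1$ on the event $\Psi$, and conclude with \Cref{lem:Nue}.

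\textbf{Factorization.} Using the eigendecomposition \eqref{eq:eigendecomp} and $U^T\Omega=(\Omega_1;\Omega_2)$, we write
\begin{equation*}
\Omega^T\Delta_E\Omega=\Omega_1^T\Lambda_1\Omega_1+\Omega_2^T\Lambda_2\Omega_2 .
\end{equation*}
Since $\Omega_1$ is nonsingular almost surely, we have $\Omega_2=K\Omega_1$. Factoring $\Omega_1$ out on both sides gives
\begin{equation*}
\Omega^T\Delta_E\Omega=\Omega_1^T\bigl(\Lambda_1+K^T\Lambda_2K\bigr)\Omega_1=\Omega_1^T\widehat{K}\,\Omega_1 .
\end{equation*}
Hence, almost surely, $\Omega^T\Delta_E\Omega$ is nonsingular if and only if $\widehat{K}$ is. It therefore suffices to prove that $\widehat{K}$ is nonsingular on $\Psi$.

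\textbf{Perturbation argument.} The matrix $\Lambda_1$ is nonsingular by the standing assumption $\lambda_k(\Delta_E)\neq0$. Moreover, by the ordering of the eigenvalues, $\|\Lambda_1^{-1}\|=1/|\lambda_k(\Delta_E)|$. We write
\begin{equation*}
\widehat{K}=\Lambda_1\bigl(I-T\bigr),\qquad T=-\Lambda_1^{-1}K^T\Lambda_2K .
\end{equation*}
To bound $T$, we split $\Lambda_2=|\Lambda_2|^{\frac12}\,\mathrm{sgn}(\Lambda_2)\,|\Lambda_2|^{\frac12}$, where $\mathrm{sgn}(\Lambda_2)$ is diagonal with entries in $\{-1,0,1\}$ and hence has norm at most $1$. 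This gives
\begin{equation*}
\|K^T\Lambda_2K\|\le\bigl\||\Lambda_2|^{\frac12}K\bigr\|^2 .
\end{equation*}
On $\Psi$ the right-hand side is at most $0.5|\lambda_k(\Delta_E)|$. Therefore
\begin{equation*}
\|T\|\le\frac{0.5\,|\lambda_k(\Delta_E)|}{|\lambda_k(\Delta_E)|}=\frac12<1 .
\end{equation*}

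\textbf{Conclusion.} By \Cref{lem:Nue}, $I-T$ is nonsingular, with $\|(I-T)^{-1}\|\le 2$. Consequently $\widehat{K}$ is nonsingular, and by the factorization so is $\Omega^T\Delta_E\Omega$, almost surely on $\Psi$. As a by-product, $\|\widehat{K}^{-1}\|\le 2/|\lambda_k(\Delta_E)|$, which is likely to be useful in the subsequent error analysis.

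\textbf{Main obstacle.} The delicate point is the indefiniteness of $\Lambda_2$. One cannot use the PSD square-root arguments, so the bound on $K^T\Lambda_2K$ must go through $|\Lambda_2|^{\frac12}$ together with the sign matrix, exactly as the event $\Psi$ is formulated. Beyond that, we only need to track the "almost surely" qualifier, which comes solely from the invertibility of $\Omega_1$.
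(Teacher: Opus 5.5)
Your proposal is correct and follows essentially the same route as the paper: the factorization $\Omega^T\Delta_E\Omega=\Omega_1^T\widehat{K}\Omega_1$, then writing $\widehat{K}=\Lambda_1(I+\Lambda_1^{-1}K^T\Lambda_2K)$ and applying \Cref{lem:Nue} with the bound $\|\Lambda_1^{-1}K^T\Lambda_2K\|\le 1/2$ on $\Psi$. Your sign-matrix justification of $\|K^T\Lambda_2K\|\le\||\Lambda_2|^{\frac12}K\|^2$ and the by-product $\|\widehat{K}^{-1}\|\le 2/|\lambda_k(\Delta_E)|$ are exactly what the paper uses, the latter appearing later in the proof of \Cref{theorem:err}.
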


     \begin{proof}
          Note that $\Omega_2=K\Omega_1$ and $\widehat{K}=\Lambda_1+K^T\Lambda_2 K$, it follows from \eqref{eq:eigendecomp} that
     \begin{align}
             &\Omega^T\Delta_E\Omega  = \Omega^T  U \Lambda U^T \Omega = \left( \begin{matrix} \Omega^TU_1 & \Omega^TU_2  \end{matrix} \right) 
             \left( \begin{matrix}
              \Lambda_1 & 0  \\
               0 & \Lambda_2  \\
               \end{matrix} \right)
               \left( \begin{array}{c}
               U_{1}^{T}\Omega\\
               U_{2}^{T}\Omega\\
               \end{array} \right) \nonumber\\
               & = \Omega_1^T \Lambda_1 \Omega_1 + \Omega_2^T \Lambda_2 \Omega_2  =  \Omega_1^T \Lambda_1 \Omega_1 + \Omega_1^T K^T \Lambda_2 K \Omega_1 = \Omega_1^T \widehat{K} \Omega_1.\label{eq:Omgea_Delta}
     \end{align}
Since $\Omega_1$ is nonsingular almost surely, it suffices to show that $\widehat{K}$ is nonsingular. Conditioning on $\Psi$, we obtain
    \begin{align*}
        \|\Lambda _{1}^{-1} K^T\Lambda _2K \| & \le \| \Lambda _{1}^{-1} \|  \, \| K^T\Lambda_2K \| \le \| \Lambda _{1}^{-1} \| \,  \| \left| \Lambda _2 \right|^{\frac{1}{2}}K \|^{2} \\
        & \le \frac{1}{|\lambda_k(\Delta_E)|}\cdot 0.5|\lambda_k(\Delta_E)|=0.5<1.
    \end{align*}
This, together with \Cref{lem:Nue}, yields that $I + \Lambda_{1}^{-1} K^T \Lambda_{2} K$ is nonsingular. Using $\widehat{K} = \Lambda_1 + K^T \Lambda_2 K = \Lambda_1 (I + \Lambda_{1}^{-1} K^T \Lambda_{2} K)$ and the fact that $\Lambda_1$ is nonsingular, we conclude that $\widehat{K}$ is nonsingular, which completes the proof.
\end{proof}

\begin{lem} \label{lem:project}
Conditioned on the event $\Psi$, let $P=\Lambda U^T \Omega(\Omega^T\Delta_E\Omega)^{-1}\Omega^TU$. Then $U^T\mathcal{E}U = (I-P)\Lambda(I-U^T\Omega M)$ holds for any $M \in \mathbb{R}^{k \times m}$, where $\Psi$, $\mathcal{E}$, and $U$ are defined in \Cref{lem:dagger}, \eqref{eq:err} and \eqref{eq:eigendecomp}, respectively.
\end{lem}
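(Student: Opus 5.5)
Conditioned on $\Psi$, \Cref{lem:dagger} shows that $\Omega^T\Delta_E\Omega$ is nonsingular almost surely, so the pseudoinverse in \eqref{eq:err} becomes an ordinary inverse. The plan is to rotate $\mathcal{E}$ into the eigenbasis and then check two algebraic identities. Write $\Gamma:=U^T\Omega\in\mathbb{R}^{m\times k}$ and $G:=\Omega^T\Delta_E\Omega=\Gamma^T\Lambda\Gamma$. Then $P=\Lambda\Gamma G^{-1}\Gamma^T$. Since $U$ is orthogonal and $\Delta_E=U\Lambda U^T$, \eqref{eq:err} gives
\begin{equation*}
U^T\mathcal{E}U=\Lambda-\Lambda\Gamma G^{-1}\Gamma^T\Lambda=(I-P)\Lambda .
\end{equation*}
This is exactly the claimed identity in the special case $M=0$.

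For general $M\in\mathbb{R}^{k\times m}$ it then suffices to prove $(I-P)\Lambda\Gamma M=0$, and for that it is enough to show $(I-P)\Lambda\Gamma=0$. This is the key step. Expanding,
\begin{equation*}
P\Lambda\Gamma=\Lambda\Gamma G^{-1}\Gamma^T\Lambda\Gamma=\Lambda\Gamma G^{-1}G=\Lambda\Gamma ,
\end{equation*}
so $(I-P)\Lambda\Gamma=\Lambda\Gamma-\Lambda\Gamma=0$. Consequently
\begin{equation*}
(I-P)\Lambda(I-\Gamma M)=(I-P)\Lambda-(I-P)\Lambda\Gamma M=(I-P)\Lambda=U^T\mathcal{E}U,
\end{equation*}
which is the statement.

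The only real issue is justifying that $G^{-1}$ exists, and this is precisely what \Cref{lem:dagger} provides on $\Psi$ (almost surely, via $G=\Omega_1^T\widehat{K}\Omega_1$). I would state explicitly that the identity holds almost surely on $\Psi$. I would also note that $P$ is an oblique projector, since $P^2=P$ follows from the same cancellation $\Gamma^T\Lambda\Gamma G^{-1}=I$. This observation is what makes the freedom in choosing $M$ useful later, for example with $M=\Omega_1^{-1}U_1^T$.
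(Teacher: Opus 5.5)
Your proof is correct and follows the paper's argument: both derive $U^T\mathcal{E}U=(I-P)\Lambda$ and $(I-P)\Lambda U^T\Omega=0$ from the cancellation $\Omega^TU\Lambda U^T\Omega=\Omega^T\Delta_E\Omega$, which makes the term $(I-P)\Lambda U^T\Omega M$ vanish for every $M$. One side remark of yours is wrong, though it does not affect this lemma: $M=\Omega_1^{-1}U_1^T$ does not give the useful block form, because $M$ multiplies $U^T\Omega$, which is already in eigen-coordinates; the choice used in \Cref{theorem:err} is $M=\bigl(\Omega_1^{-1}\ \ 0\bigr)$, for which $U^T\Omega M$ has block rows $(I\ \ 0)$ and $(K\ \ 0)$.
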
     

     \begin{proof}
      From \eqref{eq:eigendecomp}, we get 
         \begin{align*}
             (I-P)\Lambda U^T \Omega & = \Lambda U^T\Omega-P\Lambda U^T \Omega = \Lambda U^T\Omega- \Lambda U^T \Omega(\Omega^T\Delta_E\Omega)^{-1}\Omega^TU\Lambda U^T \Omega\\
             & = \Lambda U^T\Omega-\Lambda U^T\Omega=0.
         \end{align*}
         This implies that $(I-P)\Lambda U^T\Omega M=0$ holds for any $M \in \mathbb{R}^{k \times m}$. Using \eqref{eq:err} and \eqref{eq:eigendecomp}, it leads to
         \begin{align*}
             \mathcal{E} &= \Delta_E-\Delta_E\Omega(\Omega^T\Delta_E\Omega)^{-1}(\Delta_E\Omega)^T = U\Lambda U^T-U\Lambda U^T\Omega(\Omega^T\Delta_E\Omega)^{-1}\Omega^TU\Lambda U^T\\
             & = U \left[\Lambda -\Lambda U^T\Omega(\Omega^T\Delta_E\Omega)^{-1}\Omega^TU\Lambda \right] U^T = U (\Lambda - P \Lambda) U^T = U (I - P ) \Lambda U^T.
         \end{align*}
          Then we can derive that 
          \begin{equation*}
              U^T\mathcal{E}U = (I - P ) \Lambda = (I-P)\Lambda(I-U^T\Omega M),
          \end{equation*}
          which follows the result.
     \end{proof}

     \begin{lem}\label{lem:Neumann}
Let $K$ and the event $\Psi$ be defined in \Cref{lem:dagger}. Then, conditioned on $\Psi$, we have $\| ( I+K^T\Lambda _2K\Lambda _{1}^{-1} ) ^{-1} \| <2$.
\end{lem}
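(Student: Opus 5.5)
We will argue exactly as in the proof of \Cref{lem:dagger}, but with the factors in the opposite order. Set $T = -K^T\Lambda_2 K\Lambda_1^{-1}$, so that $I+K^T\Lambda_2K\Lambda_1^{-1} = I - T$. The aim is to show $\|T\|\le 0.5$ on the event $\Psi$. \Cref{lem:Nue} then gives that $I-T$ is nonsingular and
\begin{equation*}
\|(I-T)^{-1}\| \le \frac{1}{1-\|T\|}\le \frac{1}{1-0.5} = 2 .
\end{equation*}

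\textbf{Bounding $\|T\|$.} First use submultiplicativity:
\begin{equation*}
\|T\| \le \|K^T\Lambda_2K\|\,\|\Lambda_1^{-1}\| .
\end{equation*}

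\begin{itemize}[label=$\bullet$]
\item The eigenvalues are ordered by modulus and $\lambda_k(\Delta_E)\neq 0$. Hence $\Lambda_1$ is diagonal with entries of modulus at least $|\lambda_k(\Delta_E)|$, and so $\|\Lambda_1^{-1}\| = 1/|\lambda_k(\Delta_E)|$.
\item For the middle factor, write $\Lambda_2 = |\Lambda_2|^{\frac12}\,\mathrm{sgn}(\Lambda_2)\,|\Lambda_2|^{\frac12}$, where $\mathrm{sgn}(\Lambda_2)$ is a diagonal matrix of entries in $\{-1,0,1\}$ with norm at most $1$. This gives
\begin{equation*}
\|K^T\Lambda_2K\| \le \bigl\||\Lambda_2|^{\frac12}K\bigr\|^2 .
\end{equation*}
This is the same step already used in the proof of \Cref{lem:dagger}.
\end{itemize}

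Combining the two bounds with the defining inequality of $\Psi$ yields
\begin{equation*}
\|T\|\le \frac{0.5\,|\lambda_k(\Delta_E)|}{|\lambda_k(\Delta_E)|} = 0.5 .
\end{equation*}

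\textbf{Obtaining the strict inequality.} The bound from \Cref{lem:Nue} only gives $\le 2$, so one more observation is needed to get $<2$. I would use the Neumann series directly:
\begin{equation*}
\|(I-T)^{-1}\| \le \sum_{i\ge 0}\|T\|^i = \frac{1}{1-\|T\|},
\end{equation*}
which is strictly less than $2$ whenever $\|T\|<0.5$. In the boundary case $\|T\|=0.5$, the matrix $\Omega_2$ is Gaussian and independent of $\Omega_1$, so equality in the defining inequality of $\Psi$ occurs with probability zero. The statement then holds almost surely, consistent with the "almost surely" qualifiers already used in \Cref{lem:dagger}.

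\textbf{Main obstacle.} The only real difficulty is this strict-versus-nonstrict point. I would handle it with the measure-zero remark above, or else simply note that the stated bound is meant with $\le$.
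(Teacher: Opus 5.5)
Your proof is correct and is the paper's argument: on $\Psi$, $\|K^T\Lambda_2K\Lambda_1^{-1}\|\le\||\Lambda_2|^{\frac12}K\|^2/|\lambda_k(\Delta_E)|\le 0.5$, and \Cref{lem:Nue} finishes the proof. The paper's own proof only reaches $\le 2$ and never addresses strictness, which genuinely can fail on the boundary of $\Psi$ (e.g.\ $k=1$, $\lambda_1(\Delta_E)<0$, $\Lambda_2$ nonnegative gives exactly $2$). Your almost-sure remark is therefore a correct patch rather than a deviation: it holds because $\||\Lambda_2|^{\frac12}\Omega_2\Omega_1^{-1}\|^2$ is homogeneous of degree two in the Gaussian $\Omega_2$, so the equality set is null.
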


\begin{proof}
Since $\| \Lambda _{1}^{-1} \| = |\lambda_k(\Delta_E)|^{-1}$ and $\| K^T\Lambda _2K \|  \le\| \left| \Lambda _2 \right|^{\frac{1}{2}}K \|^{2}$, conditioned on the event $\Psi$, it holds that
\begin{equation*}
    \| K^T\Lambda _2K\Lambda _{1}^{-1} \| \le \| K^T\Lambda _2K \|  \, \| \Lambda _{1}^{-1} \|  \le 0.5.
\end{equation*}
According to \Cref{lem:Nue}, we have
\begin{equation*}
    \| ( I+K^T\Lambda _2K\Lambda _{1}^{-1} ) ^{-1}  \| \le \frac{1}{1- \| K^T\Lambda _2K\Lambda _{1}^{-1} \| }
    \le \frac{1}{1-0.5}=2,
\end{equation*}
which completes the proof.
     \end{proof}

Leveraging the error characterization in \eqref{eq:Nystrom}, the following theorem establishes a bound for the error incurred by \Cref{algorithm:Q} in approximating $\Delta_E$.

\begin{theorem}\label{theorem:err}
Let $\Psi$ be the event defined in \Cref{lem:dagger}, and let $\Lambda_2$ be given by \eqref{eq:eigendecomp}. Conditioned on the event $\Psi$, the approximation error $\mathcal{E}$ generated by \Cref{algorithm:Q} satisfies
\begin{align*}
\mathbb{E} \left[  \| \mathcal{E} \| \,|\,\Psi \right] & \le\left| \lambda_k\left( \Delta_E \right) \right| + 2\sqrt{2} \| \left| \Lambda_2 \right|^{\frac{1}{2}} \| \sqrt{\left| \lambda_k \left( \Delta_E \right) \right|}+2 \| \Lambda_2 \|.
\end{align*}
\end{theorem}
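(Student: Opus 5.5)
The plan is to derive a deterministic bound for $\|\mathcal{E}\|$ that holds on the event $\Psi$. The conditional expectation bound then follows immediately by taking $\mathbb{E}[\,\cdot\,|\,\Psi]$. The tool is \Cref{lem:project}, applied with a specific $M$ that annihilates the dominant block. Since $U^T\Omega = (\Omega_1;\Omega_2)$ and $\Omega_1$ is a.s.\ nonsingular, I will take $M = (\Omega_1^{-1}\ \ 0)\in\mathbb{R}^{k\times m}$. Then $U^T\Omega M = \left(\begin{smallmatrix} I & 0\\ K & 0\end{smallmatrix}\right)$, and therefore
\begin{equation*}
U^T\mathcal{E}U=(I-P)X,\qquad X:=\Lambda\left(\begin{matrix} 0&0\\ -K&I\end{matrix}\right)=\left(\begin{matrix} 0&0\\ -\Lambda_2K&\Lambda_2\end{matrix}\right).
\end{equation*}
This step removes $\Lambda_1$ from $X$, so only $\Lambda_2$ and the combination $\Lambda_2 K$ remain.

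Next I will write $P$ explicitly. By \eqref{eq:Omgea_Delta} we have $\Omega^T\Delta_E\Omega=\Omega_1^T\widehat K\Omega_1$, which is invertible on $\Psi$ by \Cref{lem:dagger}. Together with $\Omega^TU=\Omega_1^T(I\ \ K^T)$ this gives
\begin{equation*}
P=\left(\begin{matrix}\Lambda_1\\ \Lambda_2K\end{matrix}\right)\widehat K^{-1}\left(\begin{matrix} I & K^T\end{matrix}\right),\qquad PX=\left(\begin{matrix}\Lambda_1\\ \Lambda_2K\end{matrix}\right)\widehat K^{-1}\left(\begin{matrix} -K^T\Lambda_2K & K^T\Lambda_2\end{matrix}\right).
\end{equation*}
I will also use $\widehat K=(I+K^T\Lambda_2K\Lambda_1^{-1})\Lambda_1$. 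This yields $\Lambda_1\widehat K^{-1}=(I+K^T\Lambda_2K\Lambda_1^{-1})^{-1}$, whose norm is at most $2$ by \Cref{lem:Neumann}. It also yields $\|\widehat K^{-1}\|\le 2/|\lambda_k(\Delta_E)|$.

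Then I will bound $\|U^T\mathcal{E}U\|=\|\mathcal{E}\|$ block by block, using two basic inequalities. The first is $\|\Lambda_2 K\|\le\||\Lambda_2|^{1/2}\|\,\||\Lambda_2|^{1/2}K\|\le\||\Lambda_2|^{1/2}\|\sqrt{0.5|\lambda_k(\Delta_E)|}$. The second is $\|K^T\Lambda_2K\|\le\||\Lambda_2|^{1/2}K\|^2\le 0.5|\lambda_k(\Delta_E)|$. Both hold on $\Psi$.
\begin{itemize}
\item The top block row of $U^T\mathcal{E}U$ is $-(I+K^T\Lambda_2K\Lambda_1^{-1})^{-1}(-K^T\Lambda_2K\ \ K^T\Lambda_2)$. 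Its norm is at most $2(0.5|\lambda_k|+\sqrt{0.5|\lambda_k|}\,\||\Lambda_2|^{1/2}\|)=|\lambda_k|+\sqrt2\,\||\Lambda_2|^{1/2}\|\sqrt{|\lambda_k|}$.
\item The bottom block row is $(-\Lambda_2K\ \ \Lambda_2)-\Lambda_2K\widehat K^{-1}(-K^T\Lambda_2K\ \ K^T\Lambda_2)$. For it I will use the splitting $(-\Lambda_2K\ \ \Lambda_2)=(-\Lambda_2K\ \ 0)+(0\ \ \Lambda_2)$ and collect terms.
\end{itemize}
Summing the contributions by the triangle inequality should reproduce $|\lambda_k|+2\sqrt2\,\||\Lambda_2|^{1/2}\|\sqrt{|\lambda_k|}+2\|\Lambda_2\|$.

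The main obstacle is the bookkeeping in the bottom block. The bare quantity $\|K\|$ is not controlled on $\Psi$, so every occurrence of $K$ must be paired with a factor $|\Lambda_2|^{1/2}$. The second term should be grouped as $\Lambda_2 K\widehat K^{-1}K^T\Lambda_2(-K\ \ I)$ only after distributing, never with $(-K\ \ I)$ factored out. Each product of the form $\Lambda_2K\,\widehat K^{-1}K^T\Lambda_2$ is bounded by $\||\Lambda_2|^{1/2}\|^2\cdot 0.5|\lambda_k|\cdot 2/|\lambda_k|=\|\Lambda_2\|$. The product $\Lambda_2K\widehat K^{-1}K^T\Lambda_2K$ is bounded by $\||\Lambda_2|^{1/2}\|\sqrt{0.5|\lambda_k|}$ via the same pairing.

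If the constants do not match exactly, I would try the alternative decomposition $U^T\mathcal{E}U=X-PX$. In that version $\|X\|\le\|\Lambda_2\|+\|\Lambda_2K\|$ is bounded first, and then $PX$ is bounded as a whole using the factorization of $P$ above. The constants in the statement, $|\lambda_k|$, $2\sqrt2$ and $2$, suggest exactly this type of count.
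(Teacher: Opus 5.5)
Your proposal is correct and follows essentially the paper's proof. It uses the same choice $M=(\Omega_1^{-1}\ \ 0)$ in \Cref{lem:project}, the same explicit form of $P$ through $\widehat K$, and the same device of pairing every $K$ with $|\Lambda_2|^{1/2}$ together with \Cref{lem:Neumann} and $\|\widehat K^{-1}\|\le 2/|\lambda_k(\Delta_E)|$. The only cosmetic difference is that you bound the two block rows, whereas the paper bounds the four blocks $\mathcal{E}_1,\dots,\mathcal{E}_4$ separately; both give exactly the stated constants, so your fallback $X-PX$ decomposition is not needed.
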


     \begin{proof}
         From \eqref{eq:Omgea_Delta}, it is easy to compute             \begin{align}\label{eq:comput_P}
                 P & = \Lambda U^T \Omega(\Omega^T\Delta_E\Omega)^{-1}\Omega^TU = \Lambda U^T \Omega(\Omega_1^T \widehat{K} \Omega_1)^{-1}\Omega^TU \nonumber\\
                 & = \Lambda U^T \Omega \Omega_1^{-1}\widehat{K}^{-1}\Omega_1^{-T}\Omega^TU 
                 =  \left( \begin{matrix}
                 \Lambda _1&		0\\
                 0&		\Lambda _2\\
                 \end{matrix} \right) \left( \begin{array}{c}
                 \Omega _1\\
                 \Omega _2\\
                 \end{array} \right) \Omega _{1}^{-1}\widehat{K}^{-1}\Omega _{1}^{-T}\left( \begin{matrix}
                 \Omega _{1}^{T}&		\Omega _{2}^{T}\\
                 \end{matrix} \right) \nonumber \\
                 & = \left( \begin{array}{c}
                 \Lambda _1\\
                 \Lambda _2K\\
                 \end{array} \right) \widehat{K}^{-1}\left( \begin{matrix}
                 I&		K^T\\
                 \end{matrix} \right) = \left( \begin{matrix}
                 \Lambda _1\widehat{K}^{-1} &		\Lambda_1\widehat{K}^{-1}K^T \\
                 \Lambda_2K\widehat{K}^{-1} &		\Lambda_2K\widehat{K}^{-1}K^T\\ \end{matrix} \right). 
            \end{align}
         Let $M = \left( \begin{matrix} \Omega _{1}^{-1}& 0\\ \end{matrix} \right) \in \mathbb{R}^{k \times m}$, then we have
         \begin{equation*}
             U^T\Omega M = \left( \begin{array}{c} 
             \Omega _1\\
             \Omega _2\\
             \end{array} \right) \left( \begin{matrix} \Omega _{1}^{-1}& 0\\ \end{matrix} \right) = \left( \begin{matrix}
	           I&		0\\
	           K&		0\\
            \end{matrix} \right).
         \end{equation*}
This, together with \Cref{lem:project} and \eqref{eq:comput_P}, yields that
             \begin{align}\label{eq:UEU}
                 U^T\mathcal{E}U  &= (I-P)\Lambda(I-U^T\Omega M) \nonumber \\
                 & = \left( \begin{matrix}
                 \Lambda _1-\Lambda _1\widehat{K}^{-1}\Lambda _1&		-\Lambda _1\widehat{K}^{-1}K^T\Lambda _2\\
                 -\Lambda _2K\widehat{K}^{-1}\Lambda _1&		\Lambda _2-\Lambda _2K\widehat{K}^{-1}K^T\Lambda _2\\ \end{matrix} \right)  \left( \begin{matrix}
	                0&		0\\
	                -K&		I\\
                 \end{matrix} \right) \nonumber \\
                 & = \left( \begin{matrix}
                 \Lambda _1\widehat{K}^{-1}K^T\Lambda _2K&		-\Lambda _1\widehat{K}^{-1}K^T\Lambda _2\\
                 -\Lambda _2K+\Lambda _2K\widehat{K}^{-1}K^T\Lambda _2K&		\Lambda _2 -\Lambda _2K\widehat{K}^{-1}K^T\Lambda _2\\ \end{matrix} \right) := \left( \begin{matrix} \mathcal{E} _1&		\mathcal{E} _2\\ 
                 \mathcal{E} _3&		\mathcal{E} _4\\ \end{matrix} \right).
             \end{align}
By the properties of the spectral norm and the fact that $\Lambda_2$ is diagonal, it gives
\begin{equation}\label{norm2-p1}
    \| K^T \Lambda _2K \| = \| K^T |\Lambda _2|^{\frac{1}{2}} \operatorname{sgn} (\Lambda_2) |\Lambda _2|^{\frac{1}{2}}  K \| \le \| |\Lambda _2|^{\frac{1}{2}}K \|^2
\end{equation}
and
\begin{equation}\label{norm2-p2}
\| K^T\Lambda _2 \| = \| \Lambda _2 K \|
= \| |\Lambda _2|^{\frac{1}{2}}\operatorname{sgn} (\Lambda_2)|\Lambda _2|^{\frac{1}{2}}K   \| \le  \|  |\Lambda _2|^{\frac{1}{2}}\|\,\| |\Lambda _2|^{\frac{1}{2}}K \|.
\end{equation}
Here and in what follows, $\operatorname{sgn}(\Lambda_2)$ denotes the diagonal matrix of the same dimension as $\Lambda_2$, whose diagonal entries are given by the signs of the corresponding diagonal entries of $\Lambda_2$.

In view of $\widehat{K}=\Lambda_1+K^T\Lambda_2K$, by \eqref{eq:UEU}, \eqref{norm2-p1}, and \Cref{lem:Neumann}, it implies that, conditioned on the event $\Psi$,
             \begin{align}\label{eq:E1}
	               \| \mathcal{E} _1 \| & =  \| \Lambda_1\widehat{K}^{-1}K^T\Lambda_2K \| =  \| \Lambda _1 ( \Lambda _1+K^T\Lambda _2K ) ^{-1}K^T\Lambda _2K \|\nonumber\\
	               & = \| ( I+K^T\Lambda _2K\Lambda _{1}^{-1} ) ^{-1}K^T\Lambda _2K \|\nonumber\\
                   & \le \| ( I+K^T\Lambda _2K\Lambda _{1}^{-1} ) ^{-1} \| \,  \| K^T\Lambda _2K \| \le 2 \| |\Lambda _2|^{\frac{1}{2}}K  \|^{2}.
              \end{align}
Similarly, from \eqref{norm2-p2}, we obtain
        \begin{align} \label{eq:E2}
              \| \mathcal{E} _2 \|  & = \| -\Lambda _1\widehat{K}^{-1}K^T\Lambda _2\| =  \| \Lambda _1 ( \Lambda _1+K^T\Lambda _2K ) ^{-1}K^T\Lambda _2 \| \nonumber\\
	          & \le \| ( I+K^T\Lambda _2K\Lambda _{1}^{-1} ) ^{-1} \| \, \| K^T\Lambda _2 \| 
              \le  2 \| |\Lambda _2|^{\frac{1}{2}} \| \,  \| |\Lambda _2|^{\frac{1}{2}}K \|. 
        \end{align}
Using \Cref{lem:Neumann} again, it gives
\begin{align*}
 \| \widehat{K}^{-1}\|
=\|\Lambda_1^{-1} ( I+K^T\Lambda_2K\Lambda_1^{-1} ) ^{-1}\|
\le \|\Lambda_1^{-1}\| \, \|( I+K^T\Lambda_2K\Lambda_1^{-1} ) ^{-1}\| 
\le 2 \|\Lambda_1^{-1}\|.
\end{align*}
This, along with \eqref{eq:UEU}, \eqref{norm2-p1} and \eqref{norm2-p2}, leads to
\begin{align*} 
\| \mathcal{E} _3 \| & 
\le \| \Lambda _2K \| +  \| \Lambda _2K \widehat{K}^{-1}K^T\Lambda _2K \|
\le \| \Lambda _2K \| +  \| \Lambda _2K\| \,\|\widehat{K}^{-1}\| \,\|K^T\Lambda _2K \|\\
&\le \| |\Lambda _2|^{\frac{1}{2}} \| \, \| |\Lambda _2|^{\frac{1}{2}}K \| + 2 \|\Lambda_1^{-1}\| \, \| |\Lambda _2|^{\frac{1}{2}}\| \,  \| |\Lambda _2|^{\frac{1}{2}}K\|^{3}
\end{align*}
and
\begin{align*}
\| \mathcal{E} _4 \| & \le  \| \Lambda _2 \| +  \| \Lambda _2K\widehat{K}^{-1}K^T\Lambda _2 \|
\le  \| \Lambda _2 \| +  \| \Lambda _2K\|\,\|\widehat{K}^{-1}\|\,\| K^T\Lambda _2 \|\\
& \le \| \Lambda _2 \| + 2 \|\Lambda_1^{-1}\| \, \| |\Lambda _2|^{\frac{1}{2}} \| ^{2}  \,  \| |\Lambda _2|^{\frac{1}{2}}K \|^{2}.
\end{align*}
Therefore, combining the above two inequalities with \eqref{eq:UEU}, \eqref{eq:E1}, \eqref{eq:E2}, and conditioning on the event $\Psi$, we arrive at
\begin{align*}
&\mathbb{E}  \left[ \| \mathcal{E} \|\,|\,\Psi \right]  \le \mathbb{E} \left[ \| \mathcal{E}_1 \|\,|\,\Psi \right] + \mathbb{E} \left[  \| \mathcal{E}_2 \| \,|\,\Psi \right] + \mathbb{E} \left[ \| \mathcal{E}_3 \| \,|\,\Psi \right] + \mathbb{E} \left[ \| \mathcal{E}_4 \| \,|\,\Psi \right]\\
& \le \left| \lambda _k\left( \Delta _E \right) \right| + 2 \| |\Lambda _2|^{\frac{1}{2}} \| \sqrt{0.5\left| \lambda _k\left( \Delta _E \right) \right|} +  \| |\Lambda_2|^{\frac{1}{2}}\|\sqrt{0.5 \left| \lambda _k\left( \Delta _E \right) \right|} \\
&\quad +2 \|\Lambda_1^{-1}\| \, \| |\Lambda _2|^{\frac{1}{2}} \| \big(0.5\left| \lambda _k\left( \Delta _E \right) \right|\big)^{\frac{3}{2}} + \| \Lambda _2 \| + \|\Lambda_1^{-1}\| \| |\Lambda _2|^{\frac{1}{2}}\|^2 \left| \lambda _k\left( \Delta _E \right) \right|\\
& = \left| \lambda_k\left( \Delta_E \right) \right| + 2\sqrt{2} \| \left| \Lambda_2 \right|^{\frac{1}{2}} \| \sqrt{\left| \lambda_k \left( \Delta_E \right) \right|}+2 \| \Lambda_2 \| ,
\end{align*}
where the last equality holds by $\| \Lambda_{1}^{-1} \|=\left| \lambda_k ( \Delta_E ) \right|^{-1}$. This completes the proof.
\end{proof}

\begin{rem}
As shown in \Cref{theorem:err}, the approximation error admits a favorable norm bound whenever the tail singular values of $\Delta_E$ decay sufficiently rapidly.
\end{rem}

We now turn to the probability that the event $\Psi$ occurs. For any given constant $t>0$, define the following two events: 
$$\Psi_1 = \left\{ \| \Omega _{1}^{-1} \| \le t \right\} \qquad {\rm and} \qquad 
\Psi_2 = \left\{ \| \Omega _2 \| \le \frac{\sqrt{0.5\left| \lambda _k\left( \Delta _E \right) \right|}}{t \| \left| \Lambda _2 \right|^{\frac{1}{2}} \|} \right\}.$$
Note that $\Omega_1\in\mathbb{R}^{k \times k}$ is a standard Gaussian matrix. Then, by \Cref{lem:small_singular_gauss}, we obtain
    \begin{equation*}
        \mathbb{P} \left( \sigma _{\min}\left( \Omega _1 \right) \le \frac{1}{t} \right) \le \frac{c_1}{t\left( \sqrt{k}-\sqrt{k-1} \right)}+e^{-c_2k}.
    \end{equation*}    
This along with $\| \Omega _{1}^{-1} \|=\frac{1}{\sigma_{\min}(\Omega_1)}$ leads to
        \begin{align}\label{eq:psi_1}
            \mathbb{P} (\Psi_1 ) & =  \mathbb{P}\left( \| \Omega _{1}^{-1} \| \le t \right) =  \mathbb{P} \left(\frac{1}{\sigma_{\min}(\Omega_1)} \le t \right) = \mathbb{P} \left( \sigma_{\min}(\Omega_1) \ge \frac{1}{t} \right) \nonumber\\
            & = 1 - \mathbb{P} \left( \sigma _{\min}\left( \Omega _1 \right) \le \frac{1}{t} \right) \ge 1 - \frac{c_1}{t\left( \sqrt{k}-\sqrt{k-1} \right)}-e^{-c_2k}.
        \end{align}
In addition, it follows from \Cref{lem:large_singular_gauss} and the fact that $\Omega_2 \in \mathbb{R}^{(m-k)\times k}$ is also a standard Gaussian matrix that $\mathbb{P} \left( \left\| \Omega _2 \right\| \le \sqrt{m-k}+\sqrt{k}+t \right) \ge 1-2e^{-c_3t^2}$. Hence,
    \begin{equation}\label{eq:psi_2}
        \mathbb{P} (\Psi_2 )  =  \mathbb{P}\left( \| \Omega _2 \| \le \frac{\sqrt{0.5\left| \lambda _k\left( \Delta _E \right) \right|}}{t \| \left| \Lambda _2 \right|^{\frac{1}{2}} \|} \right) \ge 1-2e^{-c_3t^2}
    \end{equation}
provided by
    \begin{equation}\label{eq:event_con}
        \sqrt{0.5\left| \lambda _k\left( \Delta _E \right) \right|}\ge t\left( \sqrt{m-k}+\sqrt{k}+t \right) \| \left| \Lambda _2 \right|^{\frac{1}{2}}\|.
    \end{equation}
According to $K=\Omega_2\Omega _{1}^{-1}$ in \Cref{lem:dagger}, we have $\| \left| \Lambda_2 \right|^{\frac{1}{2}}K \|=  \| \left| \Lambda_2 \right|^{\frac{1}{2}}\Omega_2\Omega _{1}^{-1} \|   \le \| \left| \Lambda_2 \right|^{\frac{1}{2}} \| \, \left\| \Omega _2 \right\| \, \| \Omega _{1}^{-1} \|$, which implies $\Psi_1 \cap \Psi_2 \subseteq \Psi$. Therefore, if \eqref{eq:event_con} holds, by \eqref{eq:psi_1}, \eqref{eq:psi_2}, and the independence of $\Omega_1$ and $\Omega_2$, we know that
        \begin{align*}
            \mathbb{P} ( \Psi ) & \ge \mathbb{P}  ( \Psi _1 ) \, \mathbb{P}  ( \Psi _2 ) \ge \left(1 - \frac{c_1}{t\left( \sqrt{k}-\sqrt{k-1} \right)}-e^{-c_2k} \right) \left(1-2e^{-c_3t^2} \right) \\
            & \ge 1 - \frac{c_1}{t\left( \sqrt{k}-\sqrt{k-1} \right)}-e^{-c_2k}-2e^{-c_3t^2}.
        \end{align*}
This indicates that the event $\Psi$ holds with high probability under rapid decay of the tail singular values of $\Delta_E$.

\begin{rem}
    To further illustrate the applicability of condition $\Psi$, we perform a Monte Carlo study for a class of symmetric indefinite matrices. Specifically, we construct an $m=500$-dimensional matrix $\Delta_E=U\Lambda U^T$, where $U$ is a random orthogonal matrix. The eigenvalues are generated as follows: for $1\leq i\leq395$, $\lambda_i$ is assigned either $(0.05)^{i-1}$ or $-(0.05)^{i-1}$ with equal probability, resulting in a spectrum with both positive and negative eigenvalues and rapidly decaying magnitudes. To avoid numerical underflow caused by extremely small eigenvalues, we set $\lambda_i=0$ for $396\leq i\leq500$. We define $\mathrm{ratio}= \big\| |\Lambda_2|^{1/2}K \big\|^2  / |\lambda_k(\Delta_E)|$, so that condition $\Psi$ is satisfied if $\mathrm{ratio}\leq0.5$.

A Monte Carlo simulation is conducted with a fixed random seed $\mathrm{rng}(0)$. We generate $1000$ independent Gaussian sketch matrices and retain the samples satisfying $\sigma_{\min}(\Omega_1)\geq0.2$ to ensure that $\Omega_1$ is reasonably well-conditioned. For $k=10$, approximately $57\%$ of the retained samples satisfy $\mathrm{ratio}\leq0.5$, demonstrating that condition $\Psi$ can occur with non-negligible probability. We further consider $1\leq k\leq50$ and compute the empirical probability $P_k$ of event $\Psi$ for each $k$. The histogram of $\mathrm{ratio}$ for $k=10$ and the variation of $P_k$ with respect to $k$ are presented in \Cref{fig:event}.
    
As expected, the empirical probability gradually decreases as $k$ increases, since $|\lambda_k(\Delta_E)|$ becomes smaller for larger $k$. Nevertheless, event $\Psi$ occurs with positive probability over the entire tested range of $k$, indicating that condition $\Psi$ is not overly restrictive. 
    \begin{figure}[htb]
    	\centering
    	\subfigbottomskip=2pt
    	\subfigure{
    		\includegraphics[width=0.47 \linewidth]{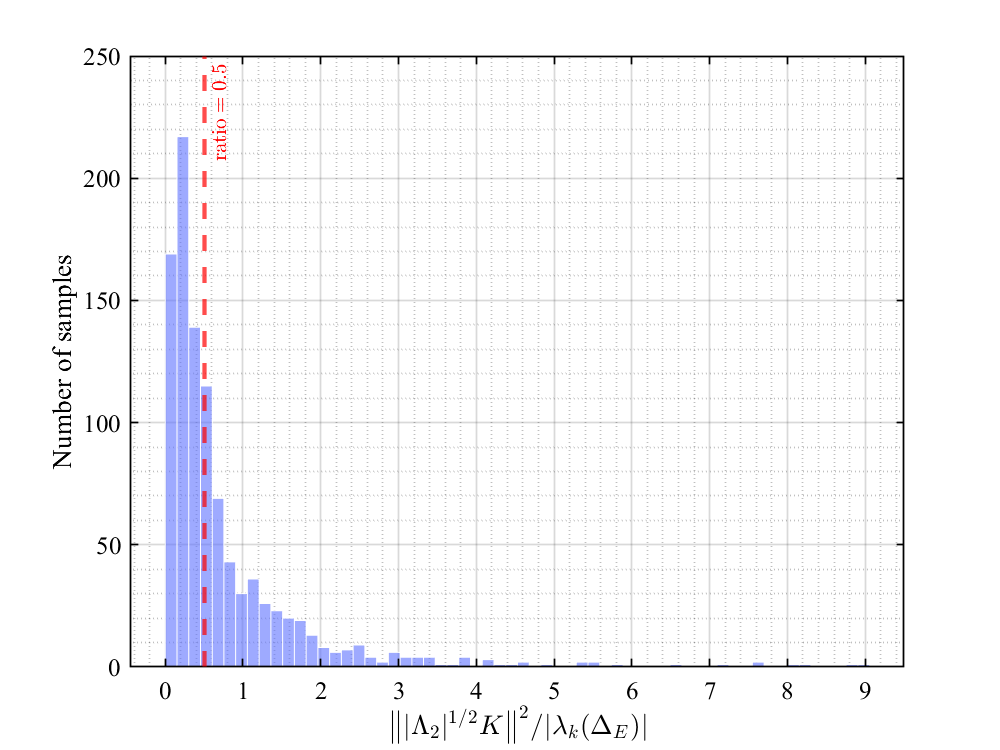}
    	}
    	\subfigure{
    		\includegraphics[width=0.47 \linewidth]{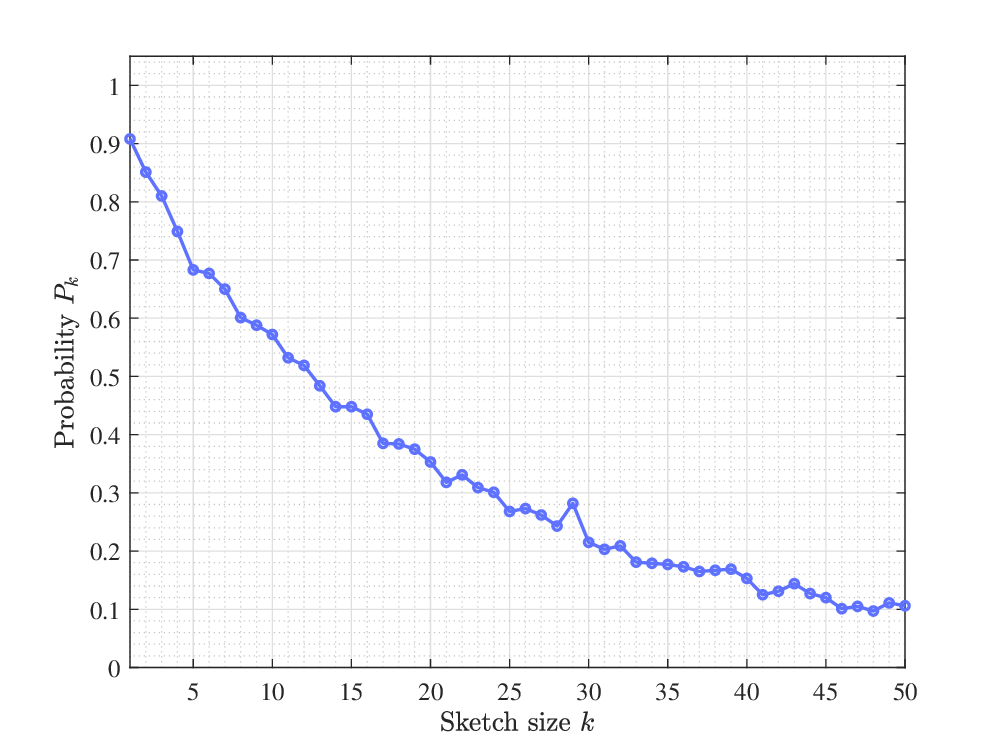}
    	}
    	\caption{Monte Carlo results: the distribution of $\mathrm{ratio}$ for $k=10$ (left) and the empirical probability of event $\Psi$ for different values of $k$ (right).}
    	\label{fig:event}
    \end{figure}
    \end{rem}

\section{Numerical experiments}\label{sec:numres}
	
In this section, numerical experiments are conducted to assess the effectiveness and robustness of the proposed preconditioners through comparisons with existing methods. Furthermore, the viability of the randomized subblock selection strategy presented in \Cref{algorithm:Q} is examined and validated. All experiments were performed on a laptop equipped with an Intel(R) Core(TM) i7-10750H CPU (2.60 GHz) and 16.0 GB of RAM, and all computations were implemented in MATLAB R2020a.
	
In our experiments, the preconditioned GMRES method is used to solve the saddle-point systems \eqref{eq:double saddle}-\eqref{eq:double saddle_equal} and \eqref{eq:double saddle_equal2}. The test problems are selected in accordance with the problem classes for which the preconditioners in the literature were originally designed. We compare our preconditioner $\calP$ in \eqref{eq:pre_inexact} (denoted ``IMD'') and its randomized subblock selection strategy (denoted ``RIMD'') with $\calP_{\rm DS}$ in \eqref{eq:pre_DS} (denoted ``DS''), $\calP_{\rm RDF}$ proposed in \cite{benzi2011relaxed} (denoted ``RDF''), $\calP_{\rm GSS}$ in \cite{ahmad2026class} (denoted ``GSS''), $\calP_{\rm BD}$ in \eqref{eq:pre_BD} (denoted ``BD''), and $\calP_{\rm BPP}$ studied in \cite{bergamaschi2025spectral} (denoted ``BPP''), where
	\begin{align*}
		\calP_{\rm RDF}&\!=\!\frac{1}{\alpha} \! \left( \begin{matrix}
			A&		0&		B^T\\
			0&		\alpha I&		0\\
			-B&		0&		\alpha I\\
		\end{matrix} \right) \! \left( \begin{matrix}
			\alpha I&		0&		0\\
			0&		E&		C\\
			0&		-C^T&		\alpha I\\
		\end{matrix} \right)\!, \, \calP_{\rm GSS} \!=\!\left( \begin{matrix}
			\alpha P+\omega A&		0&		\omega B^T\\
			0&		\beta Q+\omega E&		\omega C\\
			-\omega B&		-\omega C^T&		\tau R\\
		\end{matrix} \right),\\
		\calP_{\rm BPP} &=\left( \begin{matrix}
			\widehat{A}&		0&		0\\
			B&		-\widehat{S}&		0\\
			0&		C&		\widehat{X}\\
		\end{matrix} \right) \left( \begin{matrix}
			\widehat{A}&		0&		0\\
			0&		\widehat{S}&		0\\
			0&		0&		\widehat{X}\\
		\end{matrix} \right)^{-1}\left( \begin{matrix}
			\widehat{A}&		B^T&		0\\
			0&		-\widehat{S}&		C^T\\
			0&		0&		\widehat{X}\\
		\end{matrix} \right).
	\end{align*}	
Here, $\alpha$, $\beta$, $\tau$, $\omega$ are positive parameters, and $P$, $Q$, $R$ are SPD matrices. In $\calP_{\rm BPP}$, $\widehat{A}$, $\widehat{S}$, and $\widehat{X}$ being SPD approximations of $A$, $S=B\widehat{A}^{-1}B^T$, and $X = E + C\widehat{S}^{-1}C^T$, respectively. Moreover, to verify the necessity of designing preconditioners by exploiting the special structure of $\calA$, we also compare our preconditioners with $\calP_{\rm HSS}$ (denoted ``HSS'') in \cite{benzi2004preconditioner}, $\calP_{\rm D}$ (denoted ``DIAG'') in \cite{benzi2008some}, $\calP_{\rm T}$ (denoted ``TBD'') in \cite{benzi2008some}, and $\calP_{\rm TPSS}$ (denoted ``TPSS'') in \cite{song2022two} that are developed for the $2 \times 2$ block saddle-point system in \eqref{eq:2*2}. Here,
\begin{align*}
    & \calP_{\rm HSS} =\frac{1}{2\alpha}\left(
     \begin{matrix}
      \alpha I+A_1 & 0\\
      0 & \alpha I
      \end{matrix}\right)
      \left(\begin{matrix}
      \alpha I & B_1^T\\
      -B_1 & \alpha I
      \end{matrix}\right), \\
      & \calP_{\rm D}=\left( \begin{matrix}
	          A_1&		0\\
	        0&		S_1\\
        \end{matrix} \right), \quad \calP_{\rm T} = \left( \begin{matrix}
	        A_1&		B_1^T\\
	         0&		S_1\\
        \end{matrix} \right),
    \quad \text{and} \quad
        \calP_{\rm TPSS} = \left( \begin{matrix}
	          P_1+\tau A_1&		tB_1^T\\
	        tB_1&		-Q_1\\
        \end{matrix} \right)
\end{align*}
    with $S_1=B_1A^{-1}B_1^T$, $P_1$ and $Q_1$ being SPD, and $\alpha$, $\tau$, and $t$ being positive constants.

In all tests, we set $\widehat{A} = A_LA^T_L$ in $\calP$ and $\calP_{\rm BPP}$, where $A_L$ is the drop tolerance-based incomplete Cholesky factorization of $A$ produced by the MATLAB function \textit{ichol(A, struct(`type', `ict', `droptol', 1e-02, `michol', `on'))}. Similarly, we take $\widehat{E} = E_LE^T_L$ in $\calP$, where $E_L$ denotes the incomplete Cholesky factor of $E$, generated by the MATLAB function \textit{`ichol'}, with the parameters in the function being identical to those used for computing $A_L$. For the construction of $Q$ in $\mathcal{P}$, IMD directly sets $Q = S + C^T \mathrm{diag}(E)^{-1}C$, whereas in RIMD, $Q = S + C^TE_D^{-1}C +  VHV^T$, where $C^TE_D^{-1}C +  VHV^T$ is computed according to \Cref{algorithm:Q} with $k=10$, $\varepsilon=10^{-8}$, and $\Omega$ chosen as a sparse Gaussian random matrix. To facilitate efficient evaluation of $H$, we slightly relax the accuracy requirement in Step 6 of \Cref{algorithm:Q} and set $H = M(Z+\varepsilon I)^{-1}$. Following \cite{ahmad2026class}, the parameters in $\mathcal{P}_{\rm GSS}$ are chosen as $\alpha = \beta = 10^{-2}$, $\tau = 10^{-3}$, $P=A$, $Q = CC^T$, and $R = I$. For $\mathcal{P}_{\rm TPSS}$, we take $P_1 = 0.5{\rm diag}(A_1)$, $Q_1 = 5B_1 {\rm diag}(A_1)^{-1}B^T$, and $\tau = t = 1$ according to \cite{song2022two}. All involved SPD linear subsystems, such as those with coefficient matrices $\alpha I + A$, $\alpha I + E$, $\alpha I + S$, $\alpha I + B(\alpha I + A)^{-1}B^T$, $\alpha I + C^T(\alpha I + E)^{-1}C$, $C^T E^{-1} C$, and $E + C S^{-1} C^T$, are solved via Cholesky factorization.
	
The zero vector is used as the initial guess, and the iteration is terminated when either the iteration count exceeds $10^5$ or the relative residual at the $k$-th iteration satisfies
${\rm RES}:= \left\| b-Aw_k \right\| /\left\| b \right\| \le 10^{-8}$.
For brevity, we denote the number of iterations, CPU time (in seconds), and relative residual by ``IT'', ``CPU'', and ``RES'', respectively.

\begin{ex}\label{ex1}
\rm{\textbf{The Poisson control problem \cite{rees2010optimal,ahmad2026class,bradley2023eigenvalue}.}} Consider the distributed Poisson control problem:
\begin{align}
	\min_{u,f} \quad &\frac{1}{2} \|u - \widehat{u}\|_{L_2(\varGamma)}^2 + \beta \|f\|_{L_2(\varGamma)}^2 \label{eq:ex1_obj}\\
	\text{s.t.} \quad &-\nabla^2 u = f \quad \text{in } \varGamma, \label{eq:ex1_cons1}\\
	&\qquad~\, u = g \quad \text{on } \partial\varGamma \label{eq:ex1_cons2}, 
\end{align}
		where function $\widehat{u}$ is the known desired state, $f$ is the control, $\varGamma = [0,1] \times [0,1]$ is the domain with boundary $\partial\varGamma$, and the regularization parameter $\beta$ is set to $10^{-2}$, $10^{-3}$, and $10^{-5}$. We aim to find a state $u$ that satisfies the constraints \eqref{eq:ex1_cons1}-\eqref{eq:ex1_cons2} and is as close as possible to $\widehat{u}$ in the sense of the $L_2$ norm. For systems \eqref{eq:ex1_obj}-\eqref{eq:ex1_cons2} with Dirichlet boundary conditions, we discretize them using the Galerkin finite element method and apply the Lagrange multiplier method to obtain the following linear system:
		\begin{equation}\label{eq:ex1_dis}
			 \left( \begin{matrix}
				2\beta M&		0&		-M\\
				0&		M&		K^T\\
				-M&		K&	0\\
			\end{matrix} \right) \left( \begin{array}{c}
				\mathbf{f}\\
				\mathbf{u}\\
				\lambda \\
			\end{array} \right) =\left( \begin{array}{c}
				0\\
				\mathbf{b}\\
				\mathbf{d}\\
			\end{array} \right),	
		\end{equation}    
		where $M$ and $K$ are SPD mass matrix and stiffness matrix, $\mathbf{u}$ and $\mathbf{f}$ are the discrete forms of $u$ and $f$, respectively; $\lambda$ denotes the Lagrange multiplier; $\mathbf{b}$ and $\mathbf{d}$ are the constant vectors derived from $\widehat{u}$ and the boundary conditions, respectively. The MATLAB codes in \cite{tyronerees} are adopted to generate the test matrices. In the tests, we consider the mesh refinement level $\ell$ and the corresponding problem size $n+p+m$ as follows:
        \begin{center}
	    \begin{tabular}{ c r r r r}
		\toprule
		$\ell$ & $n$ & $p$ & $m$ & $n+p+m$\\
		\midrule
		$2^5$ & 961 & 961 & 961 & 2,883\\
		$2^6$ & 3,969 & 3,969 & 3,969 & 11,907\\
		$2^7$ & 16,129 & 16,129 & 16,129 & 48,387\\
		$2^8$ & 65,025 & 65,025 & 65,025 & 195,075\\
		$2^9$ & 261,121 & 261,121 &  261,121 & 783,363\\
		\bottomrule
	\end{tabular}
\end{center}
		\end{ex}

        \begin{table}[H]\small
			\caption{Numerical results for saddle-point system from \Cref{ex1} with $\beta = 10^{-2}$.}
			\centering
			\label{table:ex1_result}
			\setlength{\tabcolsep}{2.5mm}{
				\begin{tabular}{@{}cccccccccccc@{}}
					\toprule
					\multirow{2}{*}{Methods} & \multirow{2}{*}{} & \multicolumn{5}{c}{Mesh refinement level}                        \\ \cmidrule(l){3-7}
					&    & $2^5$  & $2^6$  & $2^7$  & $2^8$  & $2^9$    \\ \midrule
                    HSS 
                    & IT & 54 & 42 & 56 & 172 & - \\ 
                    & CPU & 0.13 & 1.19 & 16.84 & 253.88 & - \\
                    & RES & 5.42e-09 & 7.47e-09 & 8.04e-09 & 4.78e-09 & - \\
                    DIAG 
                    & IT & 3 & 3 & 3 & - & - \\ 
                    & CPU & 0.12 & 3.91 & 114.67 & - & - \\
                    & RES & 4.94e-12 & 3.36e-11 & 5.47e-10 & - & - \\
                    TBD
                    & IT & 2 & 2 & 2 & - & - \\
                    & CPU & 0.12 & 3.27 & 109.75 & - & - \\
                    & RES & 8.57e-12 & 6.60e-11 & 1.06e-09 & - & - \\
                    TPSS
                    & IT & 44 & 44 & - & - & - \\
                    & CPU & 0.79 & 24.31 & - & - & - \\
                    & RES & 8.97e-09 & 7.74e-09 & - & - & - \\
                    DS
                    & IT & 28 & 30 & 58 & - & - \\
                    & CPU & 0.35 & 15.81 & 734.17 & - & - \\
                    & RES & 9.26e-09 & 5.71e-09 & 6.93e-09 & - & - \\
                    RDF
                    & IT & 24 & 17 & 13 & - & - \\
                    & CPU & 0.19 & 4.51 & 125.15 & - & - \\
                    & RES & 5.52e-09 & 3.25e-09 & 4.29e-09 & - & - \\
                    GSS
                    & IT & 4 & 4 & 4 & - & - \\
                    & CPU & 0.27 & 5.12 & 198.94 & - & - \\
                    & RES & 3.04e-10 & 2.78e-10 & 2.74e-10 & - & - \\
                    BD
                    & IT & 14 & 14 & 14 & - & - \\
                    & CPU & 0.15 & 3.69 & 131.21 & - & - \\
                    & RES & 2.53e-09 & 2.44e-09 & 2.38e-09 & - & - \\
                    BPP
                    & IT & 2 & 2 & 3 & - & - \\
                    & CPU & 0.17 & 3.19 & 113.71 & - & - \\
                    & RES & 4.26e-11 & 3.68e-11 & 8.10e-12 & - & - \\
                    IMD
                    & IT & 28 & 28 & 28 & 28 & 28 \\
                    & CPU & 0.07 & 0.45 & 5.72 & 62.71 & 687.89 \\
                    & RES & 7.26e-09 & 8.51e-09 & 8.37e-09 & 7.98e-09 & 7.50e-09 \\
                    RIMD
                    & IT & 39 & 39 & 39 & 39 & 39 \\
                    & CPU & 0.16 & 0.61 & 2.99 & 29.57 & 161.01 \\
                    & RES & 5.86e-09 & 6.93e-09 & 6.46e-09 & 5.87e-09 & 5.50e-09 \\        \bottomrule
					\multicolumn{7}{l}{\footnotesize \textsuperscript{*}Note: Here and throughout, the symbol ``-'' denotes that the corresponding method failed.}
			\end{tabular}}
		\end{table}

        \begin{figure}[H]
			\centering
			\subfigbottomskip=2pt
			\subfigure{
				\includegraphics[width=0.45 \linewidth]{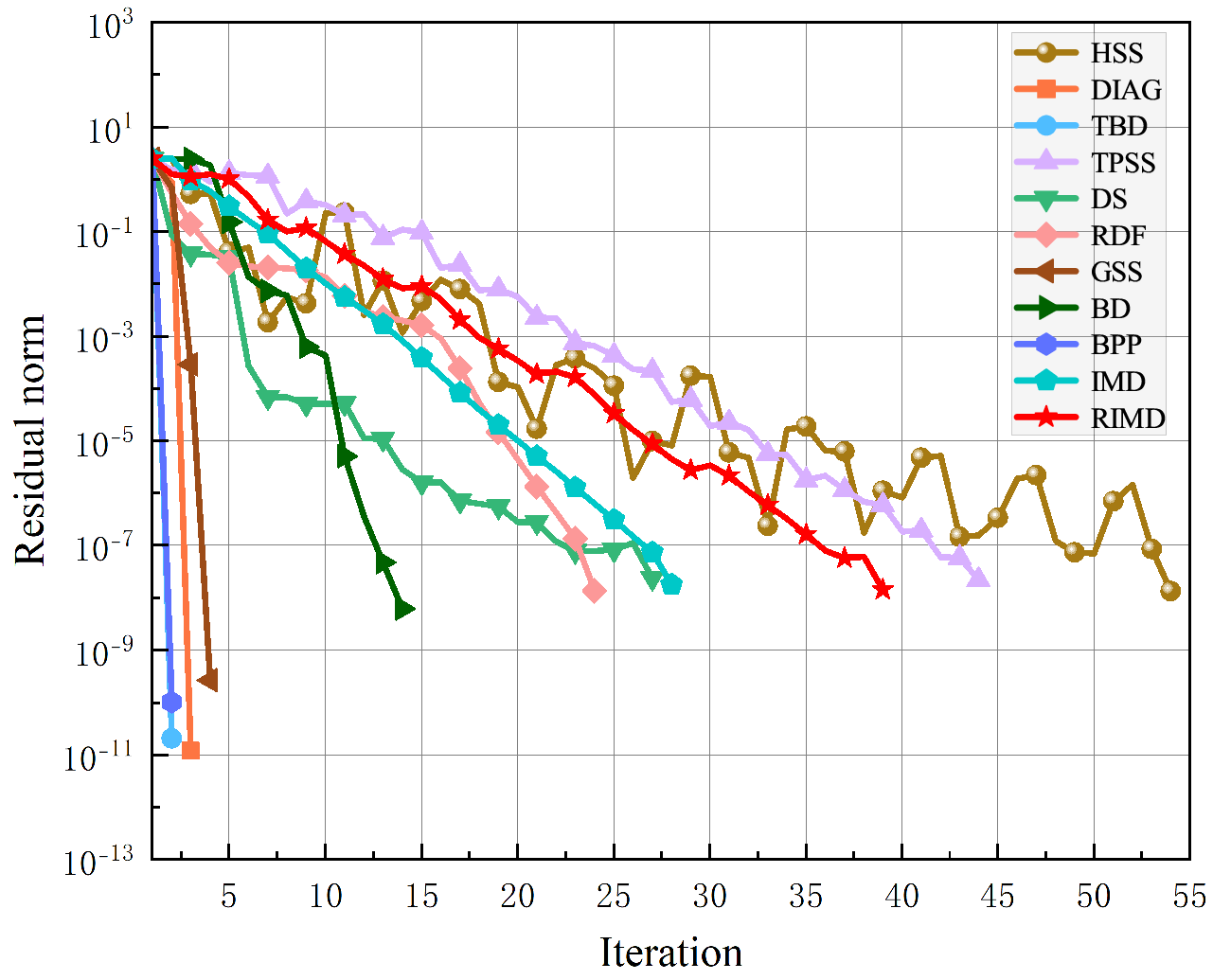}
			}
			\subfigure{
				\includegraphics[width=0.45 \linewidth]{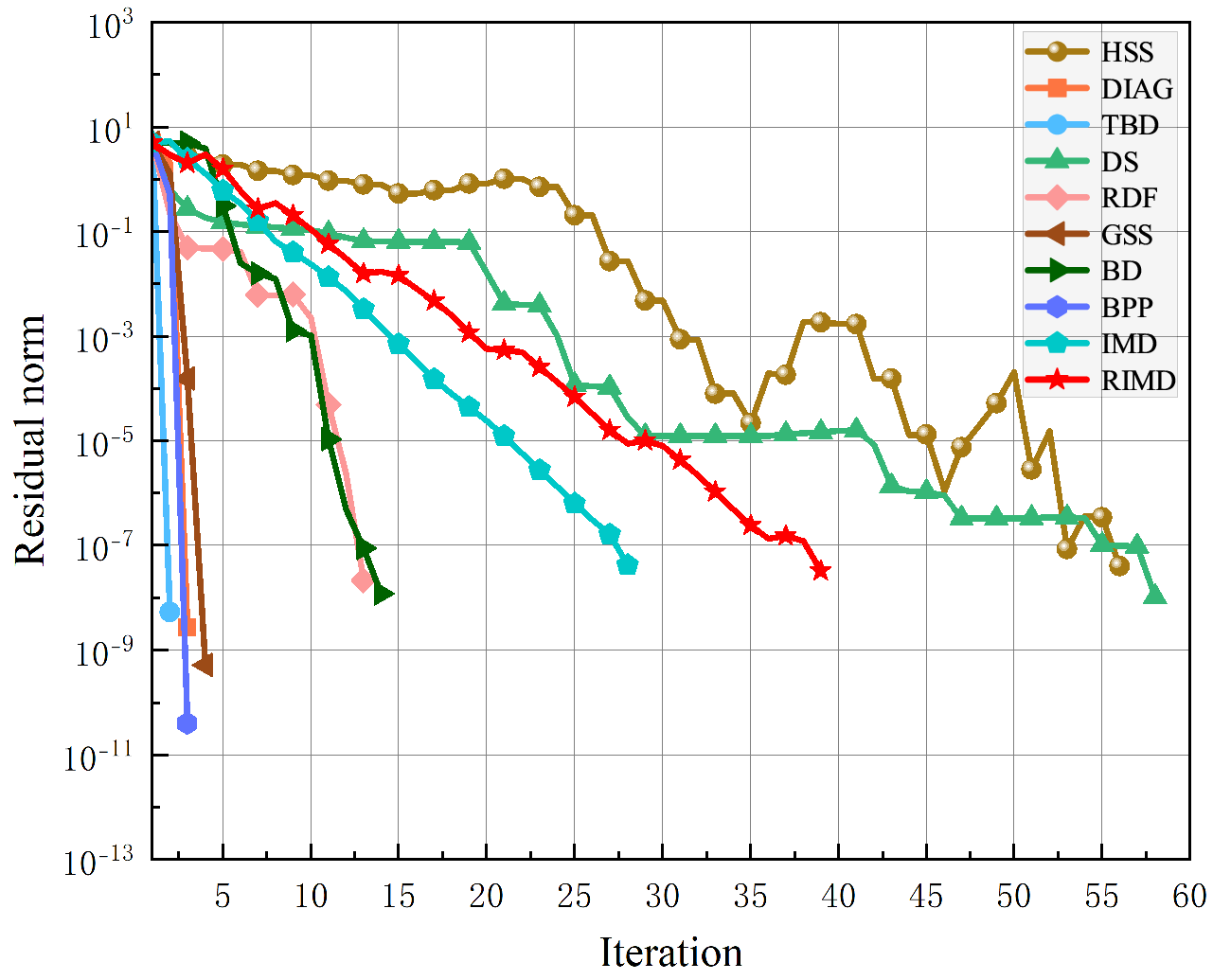}
			}
			\caption{Convergence curves of different methods for solving the double saddle-point system from \Cref{ex1} with $\beta=10^{-2}$ and $\ell$ being $2^5$ (left) and $2^7$ (right).}
			\label{fig:res_ex1}
		\end{figure}

        The parameter $\alpha$ in $\calP_{\rm HSS}$ is determined by numerical tests. With the regularization parameter fixed at $\beta=10^{-2}$ and the mesh refinement level $\ell=2^5$, ten logarithmically‑uniform sample points are taken over the interval $[10^{-7},10^{2}]$. The value yielding the minimal CPU runtime is selected, which gives $\alpha=10^{-2}$. This parameter is adopted for all tests in \Cref{ex1}. Since $S = BA^{-1}B^T = \frac{1}{2\beta}M$ can be easily obtained by discretization in \Cref{ex1}, we take $\widehat{S} = S$ and $\widehat{X} = E + CS^{-1}C^T$ in $\calP_{\rm BPP}$.  The parameter $\alpha$ in $\calP_{\rm DS}$ and $\calP_{\rm RDF}$ is set to $10^{-2}$. Numerical results for \Cref{ex1} are given in \Cref{table:ex1_result,table:ex1_result2,table:ex1_result3}. The CPU time reported in \Cref{table:ex1_result,table:ex1_result2,table:ex1_result3} includes the time required to construct and compute the corresponding preconditioner. The residual norms versus the number of iterations for different methods are shown in \Cref{fig:res_ex1}. We plot the eigenvalue distributions of the original coefficient matrix and the preconditioned matrices in \Cref{fig:eigenvalue_ex1}. As the eigenvalue distribution of $\calP^{-1}\calA$ is almost the same under the two tested strategies for selecting $Q$, we show only one of them in \Cref{fig:eigenvalue_ex1}. 

         \begin{table}[H]\small
			\caption{Numerical results for saddle-point system from \Cref{ex1} with $\beta = 10^{-3}$.}
			\centering
			\label{table:ex1_result2}
			\setlength{\tabcolsep}{2.5mm}{
				\begin{tabular}{@{}cccccccccccc@{}}
					\toprule
					\multirow{2}{*}{Methods} & \multirow{2}{*}{} & \multicolumn{5}{c}{Mesh refinement level}                        \\ \cmidrule(l){3-7}
					&    & $2^5$  & $2^6$  & $2^7$  & $2^8$  & $2^9$    \\ \midrule
                    HSS
                    & IT & 62 & 53 & 77 & 197 & - \\ 
                    & CPU & 0.19 & 1.68 & 23.34 & 307.20 & - \\
                    & RES & 3.89e-09 & 4.27e-09 & 7.15e-09 & 4.40e-09 & - \\
                    DIAG    & IT & 3 & 3 & 3 & - & - \\
                    & CPU & 0.11 & 4.60 & 115.86 & - & - \\
                    & RES & 2.89e-12 & 2.13e-11 & 3.33e-10 & - & - \\
                    TBD
                    & IT & 2 & 2 & 2 & - & - \\
                    & CPU & 0.13 & 3.17 & 111.87 & - & - \\
                    & RES & 8.63e-12 & 6.09e-11 & 7.46e-10 & - & - \\
                    TPSS
                    & IT & 46 & 46 & - & - & - \\
                    & CPU & 0.81 & 28.50 & - & - & - \\
                    & RES & 8.85e-09 & 6.33e-09 & - & - & - \\
                    DS
                    & IT & 30 & 36 & 69 & - & - \\
                    & CPU & 0.42 & 19.16 & 771.37 & - & - \\
                    & RES & 7.46e-09 & 2.07e-09 & 2.10e-09 & - & - \\
                    RDF
                    & IT & 29 & 28 & 22 & - & - \\
                    & CPU & 0.20 & 3.66 & 141.33 & - & - \\
                    & RES & 8.42e-09 & 6.82e-09 & 3.29e-09 & - & - \\
                    GSS
                    & IT & 4 & 4 & 4 & - & - \\
                    & CPU & 0.37 & 8.33 & 221.78 & - & - \\
                    & RES & 1.07e-09 & 9.63e-10 & 9.19e-10 & - & - \\
                    BD
                    & IT & 21 & 21 & 21 & - & - \\
                    & CPU & 0.16 & 5.36 & 133.75 & - & - \\
                    & RES & 3.88e-09 & 4.07e-09 & 8.58e-09 & - & - \\
                    BPP
                    & IT & 2 & 2 & 2 & - & - \\
                    & CPU & 0.11 & 4.61 & 113.94 & - & - \\
                    & RES & 4.14e-12 & 2.22e-10 & 8.29e-10 & - & - \\
                    IMD
                    & IT & 30 & 30 & 30 & 30 & 30 \\
                    & CPU & 0.06 & 0.55 & 6.20 & 67.09 & 710.18 \\
                    & RES & 7.19e-09 & 8.99e-09 & 9.43e-09 & 9.52e-09 & 9.50e-09 \\
                    RIMD
                    & IT & 43 & 45 & 43 & 43 & 43 \\
                    & CPU & 0.18 & 0.73 & 6.76 & 34.12 & 181.22 \\
                    & RES & 8.07e-09 & 7.14e-09 & 7.14e-09 & 7.09e-09 & 6.88e-09 \\\hline 
			\end{tabular}}
		\end{table}

        \begin{table}[htb]\small
			\caption{Numerical results for saddle-point system from \Cref{ex1} with $\beta = 10^{-5}$.}
			\centering
			\label{table:ex1_result3}
			\setlength{\tabcolsep}{2.5mm}{
				\begin{tabular}{@{}cccccccccccc@{}}
					\toprule
					\multirow{2}{*}{Methods} & \multirow{2}{*}{} & \multicolumn{5}{c}{Mesh refinement level}                        \\ \cmidrule(l){3-7}
					&    & $2^5$  & $2^6$  & $2^7$  & $2^8$  & $2^9$    \\ \midrule
                    HSS
                    & IT & 122 & 96 & 132 & 276 & - \\ 
                    & CPU & 0.80 & 3.97 & 39.43 & 428.34 & - \\
                    & RES & 5.26e-09 & 1.90e-09 & 9.21e-09 & 7.30e-09 & - \\
                    DIAG
                    & IT & 3 & 3 & 3 & - & -\\
                    & CPU & 0.11 & 4.19 & 113.70 & - & -\\
                    & RES & 6.94e-14 & 1.66e-13 & 7.79e-12 & - & -\\
                    TBD
                    & IT & 2 & 2 & 2 & - & -\\
                    & CPU & 0.12 & 3.95 & 113.42 & - & -\\
                    & RES & 1.02e-13 & 3.73e-13 & 1.68e-11 & - & -\\
                    TPSS
                    & IT & 48 & 51 & - & - & -\\
                    & CPU & 1.15 & 30.94 & - & - & -\\
                    & RES & 6.49e-09 & 6.65e-09 & - & - & -\\
                    DS
                    & IT & 64 & 61 & 95 & - & -\\
                    & CPU & 0.58 & 24.74 & 725.28 & - & -\\
                    & RES & 4.70e-09 & 6.42e-09 & 6.73e-09 & - & -\\
                    RDF
                    & IT & 27 & 27 & 28 & - & -\\
                    & CPU & 0.19 & 6.21 & 142.52 & - & -\\
                    & RES & 7.58e-09 & 9.72e-09 & 9.75e-09 & - & -\\
                    GSS
                    & IT & 5 & 5 & 4 & - & -\\
                    & CPU & 0.32 & 8.63 & 224.34 & - & -\\
                    & RES & 1.64e-09 & 4.41e-10 & 2.15e-09 & - & -\\
                    BD
                    & IT & 50 & 49 & 47 & - & -\\
                    & CPU & 0.30 & 6.97 & 164.43 & - & -\\
                    & RES & 9.20e-09 & 4.51e-09 & 5.04e-09 & - & -\\
                    BPP
                    & IT & 2 & 2 & 2 & - & -\\
                    & CPU & 0.12 & 3.08 & 115.96 & - & -\\
                    & RES & 4.41e-13 & 2.28e-11 & 3.05e-11 & - & -\\
                    IMD
                    & IT & 48 & 48 & 51 & 51 & 51\\
                    & CPU & 0.08 & 0.71 & 9.61 & 107.93 & 1212.34\\
                    & RES & 3.45e-09 & 4.85e-09 & 3.80e-09 & 4.10e-09 & 3.77e-09\\
                    RIMD
                    & IT & 62 & 63 & 62 & 62 & 62\\
                    & CPU & 0.28 & 0.98 & 9.02 & 53.21 & 271.09\\
                    & RES & 4.49e-09 & 7.76e-09 & 6.43e-09 & 7.28e-09 & 8.24e-09\\   \hline
			\end{tabular}}
		\end{table}

        From the numerical results in \Cref{table:ex1_result,table:ex1_result2,table:ex1_result3}, we can observe that IMD and RIMD exhibit a significant advantage in terms of CPU time, especially when the problem size is large. The superiority becomes even more pronounced as the problem scale increases, while the other methods fail to solve large-scale problems (when the mesh refinement level is greater than $2^7$ or $2^8$). Based on the results obtained from HSS, DIAG, TBD, and TPSS, it is evident that the CPU time of preconditioners developed for standard saddle-point system grows rapidly as the problem size expands. TPSS fails when the mesh refinement level is greater than $2^6$, and all three methods DIAG, TBD, and TPSS fail when the mesh refinement level is greater than $2^7$. The number of iterations required by HSS varies significantly with $\ell$, showing poor $\ell$-robustness. These results illustrate the necessity of investigating preconditioning techniques by taking advantage of the special $3 \times 3$ block structure of the double saddle-point system \eqref{eq:double saddle} for improved computational efficiency. A comparison between IMD and RIMD shows that RIMD is superior to IMD in CPU time when solving large-scale problems, which verifies that our selection strategy for $Q$ in \Cref{sec:select} is effective and that the randomized low-rank approximation approach is feasible for constructing inexact preconditioners. As $\beta$ decreases, the number of iterations required by almost all tested methods increases within a certain range. With a fixed value of $\beta$, the iteration number of the RIMD is nearly unchanged with the increase of mesh refinement level $\ell$, which demonstrates the $\ell$-robustness of the proposed method. Moreover, it can be seen that the inexact preconditioner yields better performance when we compare the numerical results of IMD (or RIMD) and BPP with those of other methods. \Cref{fig:res_ex1} shows that all the tested methods converge within a finite number of iterations. We can observe from \Cref{fig:eigenvalue_ex1} that all the preconditioners tested improve the spectral distribution of the original coefficient matrix, leading to a clustered eigenvalue distribution of the preconditioned matrices.

        \begin{figure}[H]
			\centering
			\subfigbottomskip=1pt
			\subfigure[$\calA$]{
				\includegraphics[width=0.31 \linewidth]{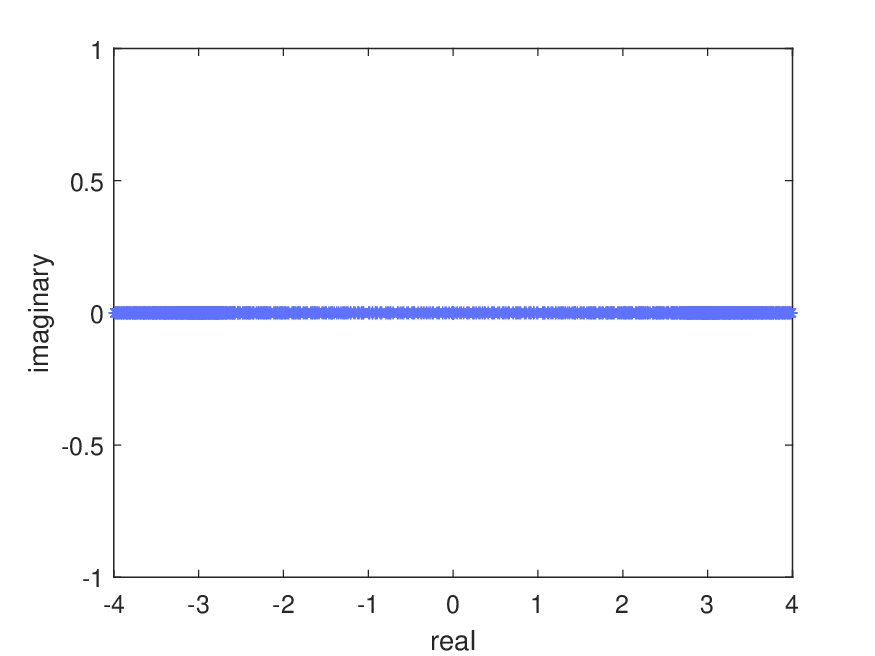}
				}
                \subfigure[$\calP_{\rm HSS}^{-1}\cal{A}$]{
				\includegraphics[width=0.31 \linewidth]{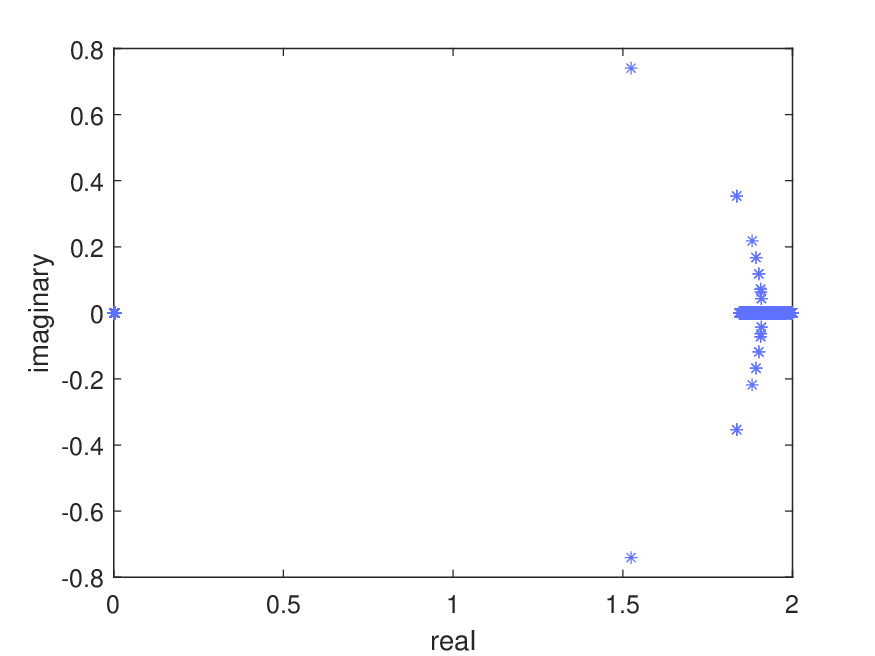}
				}
                
                \subfigure[$\calP_{\rm DIAG}^{-1}\cal{A}$]{
				\includegraphics[width=0.31 \linewidth]{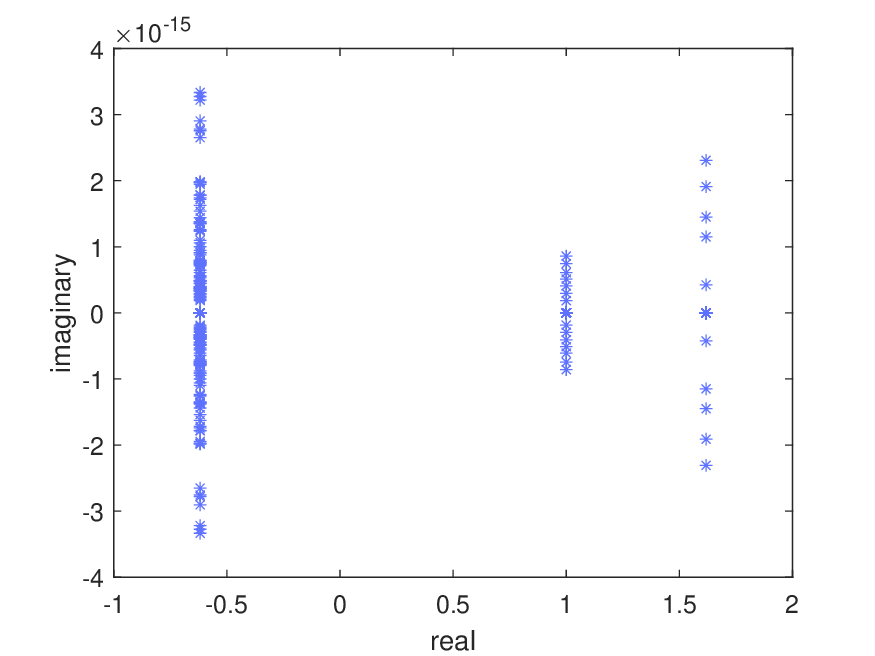}
				}
                \subfigure[$\calP_{\rm TBD}^{-1}\cal{A}$]{
				\includegraphics[width=0.31 \linewidth]{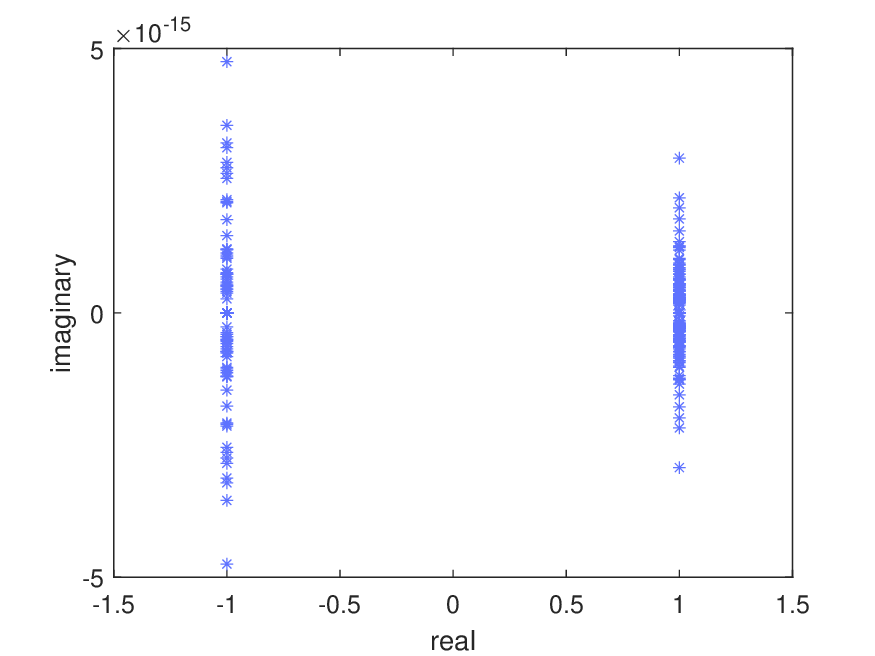}
				}
                \subfigure[$\calP_{\rm TPSS}^{-1}\cal{A}$]{
				\includegraphics[width=0.31 \linewidth]{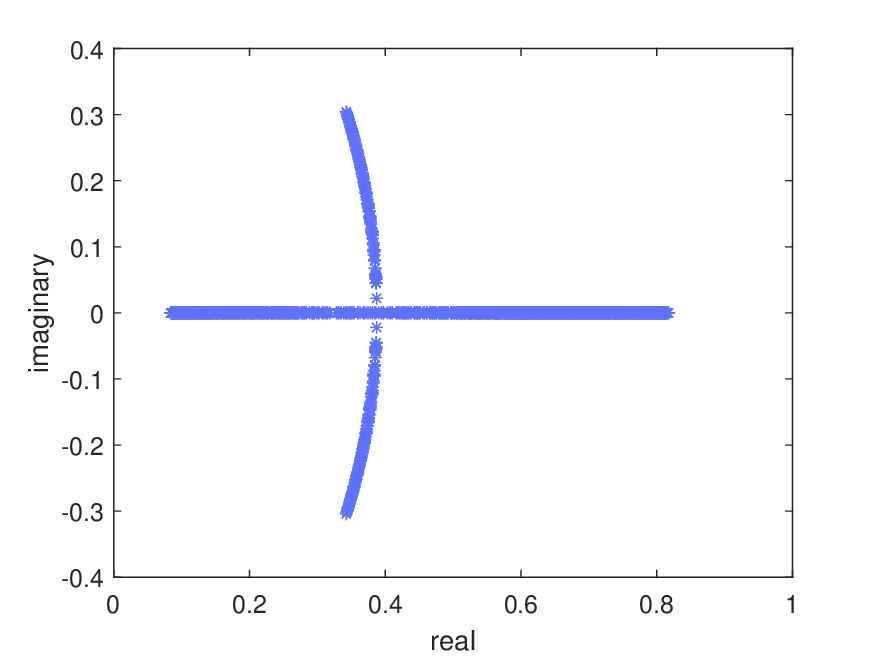}
				}
                
                \subfigure[$\calP_{\rm DS}^{-1}\mathcal{B}$]{
				\includegraphics[width=0.31 \linewidth]{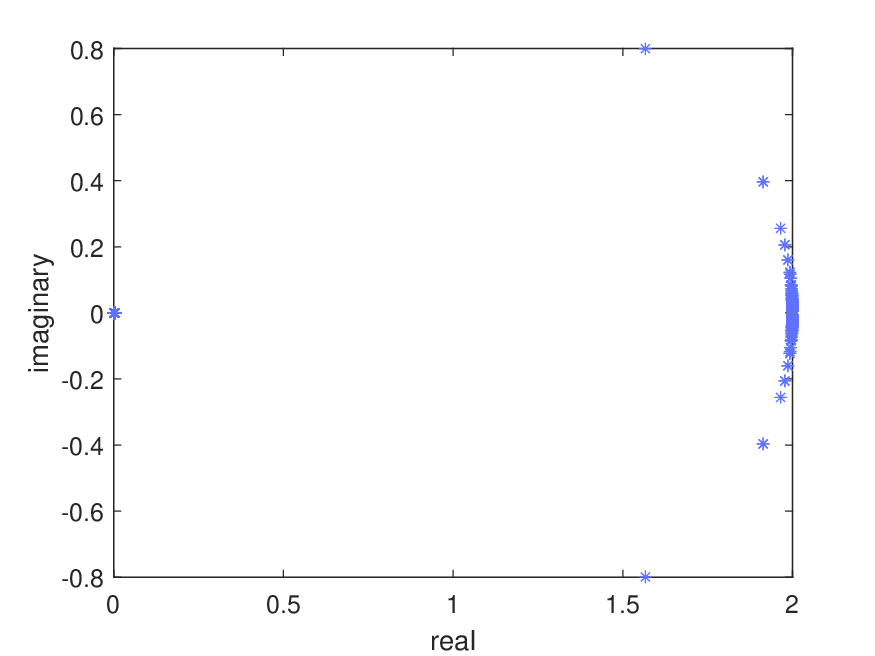}
				}
			\subfigure[$\calP_{\rm RDF}^{-1}\mathcal{B}$]{
				\includegraphics[width=0.31 \linewidth]{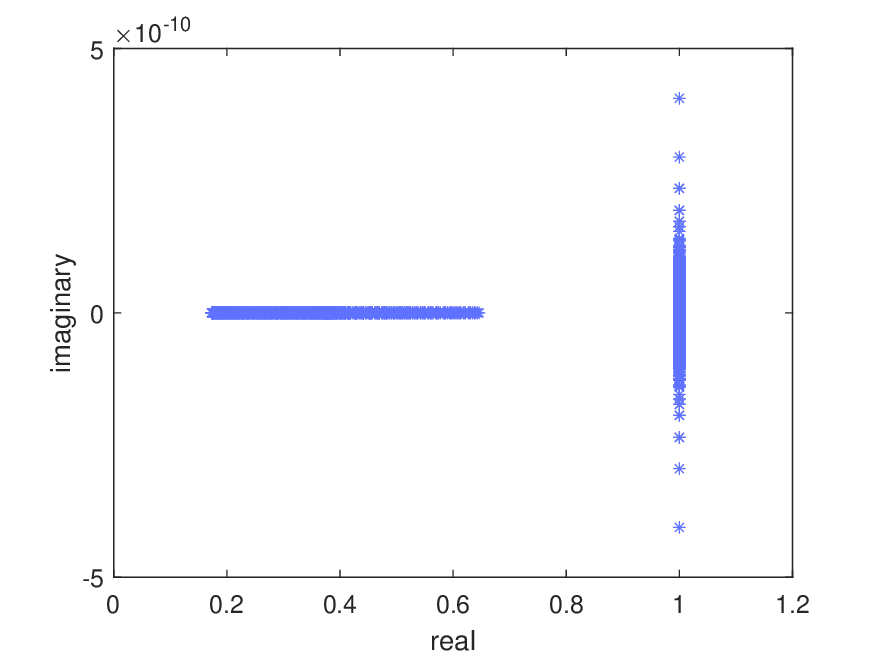}
				}                
			\subfigure[$\calP_{\rm GSS}^{-1}\mathcal{B}$]{
				\includegraphics[width=0.31 \linewidth]{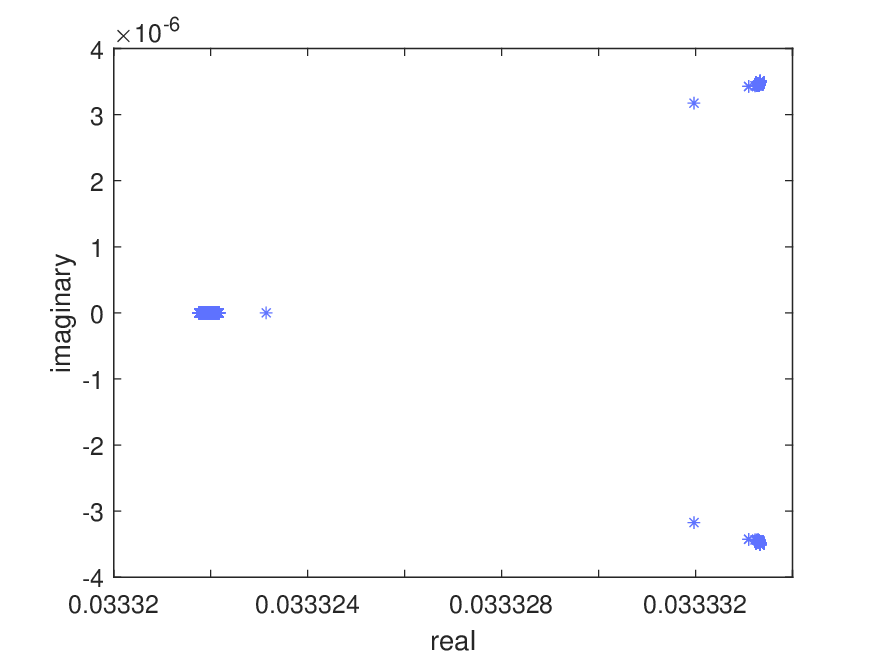}
				}
                
			\subfigure[$\calP_{\rm BD}^{-1}\mathcal{C}$]{
				\includegraphics[width=0.31 \linewidth]{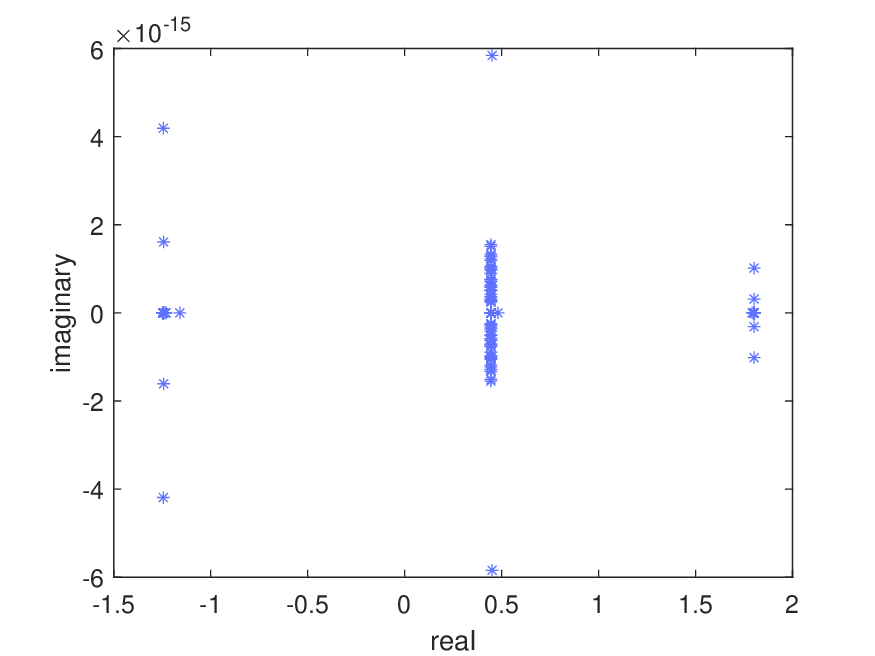}
			}
			\subfigure[$\calP_{\rm BPP}^{-1}\mathcal{C}$]{
				\includegraphics[width=0.31 \linewidth]{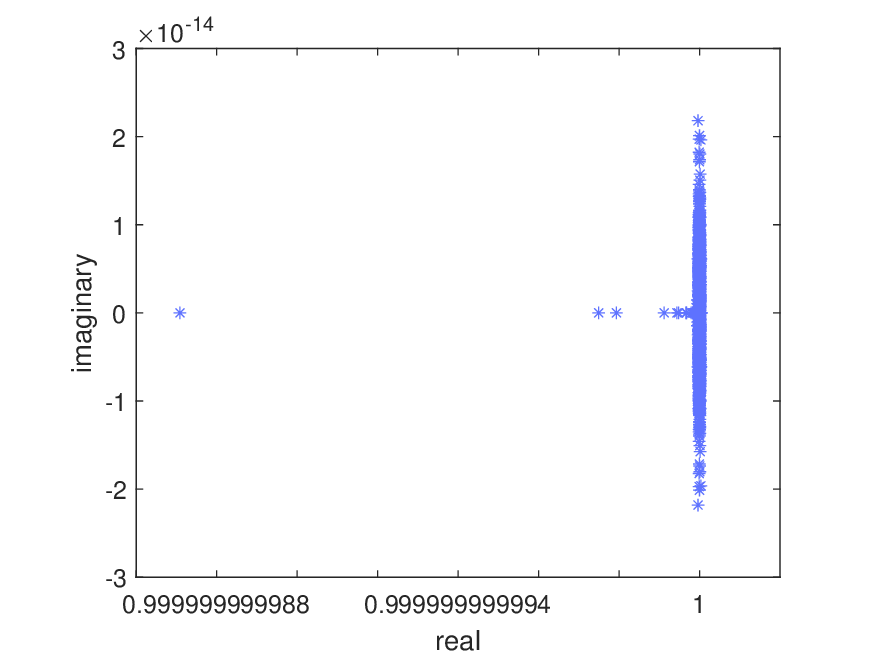}
			}
            \subfigure[$\calP^{-1}\calA$]{
				\includegraphics[width=0.31 \linewidth]{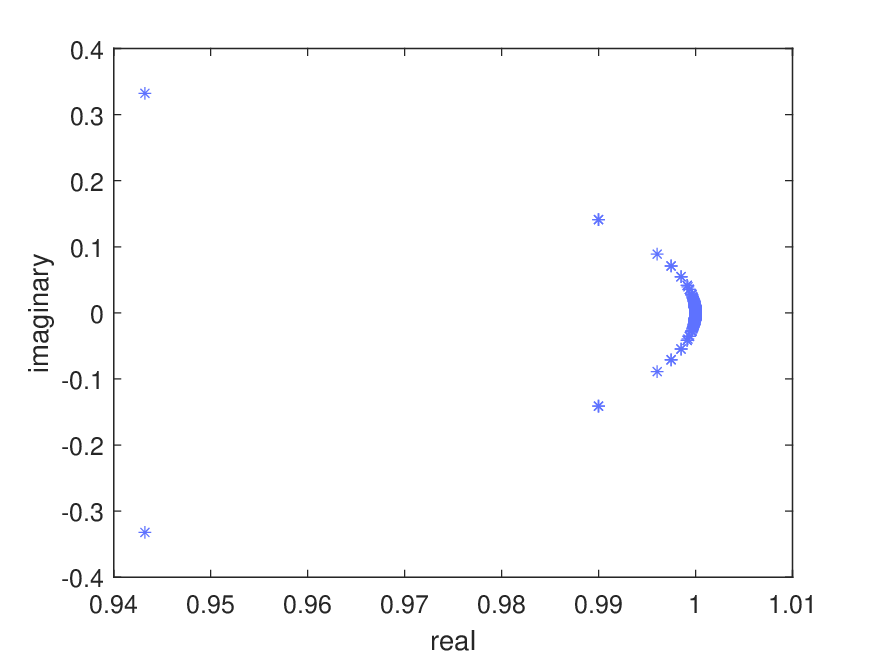}
			}
			\caption{Eigenvalue distributions of original coefficient matrix and the preconditioned matrices for \Cref{ex1} with $\beta=10^{-2}$ and $\ell=2^5$.}
			\label{fig:eigenvalue_ex1}
		\end{figure}

    \begin{table}[H]\small
			\caption{Numerical results for saddle-point system from \Cref{ex2} with $\gamma=10^{-3}$.}
			\centering
			\label{table:ex2_result}
			\setlength{\tabcolsep}{2.5mm}{
				\begin{tabular}{@{}ccccccccccccccc@{}}
					\toprule
					\multirow{2}{*}{Methods} & \multirow{2}{*}{} & \multicolumn{5}{c}{$h$}                        \\ \cmidrule(l){3-7}
					&    & $2^{-3}$  & $2^{-4}$  & $2^{-5}$  & $2^{-6}$  & $2^{-7}$   \\ \midrule
                    HSS 
                    & IT & 99 & 86 & 127 & - & - \\ 
                    & CPU & 0.05 & 0.16 & 2.55 & - & - \\
                    & RES & 4.80e-09 & 5.38e-09 & 9.99e-09 & - & - \\
					DIAG
                    & IT & 2 & 2 & 2 & 2 & 2\\
                    & CPU & 0.01 & 0.04 & 0.20 & 5.89 & 153.62\\
                    & RES & 7.07e-14 & 8.62e-13 & 2.28e-11 & 5.07e-10 & 9.18e-09\\
                    TBD
                    & IT & 1 & 1 & 1 & 1 & 1\\
                    & CPU & 0.01 & 0.02 & 0.20 & 5.56 & 156.17\\
                    & RES & 5.55e-14 & 8.47e-13 & 2.29e-11 & 5.08e-10 & 9.19e-09\\
                    TPSS
                    & IT & 32 & 32 & 30 & 33 & -\\
                    & CPU & 0.02 & 0.11 & 1.63 & 56.10 & -\\
                    & RES & 4.99e-09 & 2.64e-09 & 5.82e-09 & 4.52e-09 & -\\
                    DS
                    & IT & 22 & 24 & 27 & - & -\\
                    & CPU & 0.03 & 0.08 & 0.52 & - & -\\
                    & RES & 4.67e-09 & 8.64e-09 & 9.58e-09 & - & -\\
                    RDF
                    & IT & 22 & 24 & 27 & - & -\\
                    & CPU & 0.03 & 0.05 & 0.33 & - & -\\
                    & RES & 3.11e-09 & 7.90e-09 & 4.57e-09 & - & -\\
                    GSS
                    & IT & 5 & 5 & 5 & - & -\\
                    & CPU & 0.03 & 0.05 & 0.35 & - & -\\
                    & RES & 6.89e-10 & 1.84e-09 & 7.57e-09 & - & -\\
                    BD
                    & IT & 25 & 25 & 25 & 25 & -\\
                    & CPU & 0.03 & 0.06 & 0.33 & 11.67 & -\\
                    & RES & 2.01e-09 & 3.99e-09 & 4.84e-09 & 5.14e-09 & -\\
                    BPP
                    & IT & 15 & 16 & 16 & 18 & 26\\
                    & CPU & 0.02 & 0.03 & 0.16 & 0.76 & 3.89\\
                    & RES & 3.06e-09 & 2.40e-09 & 3.29e-09 & 4.89e-09 & 6.25e-09\\
                    IMD
                    & IT & 25 & 26 & 29 & 29 & 34\\
                    & CPU & 0.01 & 0.03 & 0.09 & 0.38 & 2.76\\
                    & RES & 3.46e-09 & 9.75e-09 & 4.53e-09 & 9.75e-09 & 9.18e-09\\
                    RIMD
                    & IT & 27 & 28 & 29 & 29 & 34\\
                    & CPU & 0.02 & 0.03 & 0.09 & 0.38 & 2.23\\
                    & RES & 3.77e-09 & 3.39e-09 & 2.43e-09 & 8.91e-09 & 9.05e-09\\
					\hline
			\end{tabular}}
		\end{table}

	\begin{ex}\label{ex2}
		\rm{\textbf{Full Observation Problem \cite{bergamaschi2025spectral}.}} We focus on the PDE-constrained optimization problem of the form:
        \begin{equation}\label{eq:ex2_obj}
            \begin{aligned}
			\min_{y,f} \quad &\tfrac{1}{2} \|y - \widehat{y}\|_{L_2(\varGamma)}^2 + \tfrac{\gamma}{2} \|f\|_{L_2(\varGamma)}^2 \\
			\text{s.t.} \quad &-\Delta y + y + f = 0 \quad \text{in } \varGamma, \\
			&\qquad\qquad~~\,\frac{\partial y}{\partial \mathbf{n}} = g \quad \text{on } \partial\varGamma, 
		\end{aligned}
        \end{equation}
		where $\widehat{y}$ is the provided desired state, $f$ is the control, $\varGamma = (0,1)^2$ is the domain with boundary $\partial\varGamma$, $\mathbf{n}$ is the unit outward normal vector of $\partial\varGamma$, and $\gamma>0$ is a regularization parameter. We set $\gamma$ is $10^{-3}$, $10^{-4}$, and $10^{-5}$. Discretizing \eqref{eq:ex2_obj} using P1 finite elements yields the following linear system in the form of a double saddle-point system:
		\begin{equation}\label{eq:ex2_dis}
			\left( \begin{matrix}
				\gamma M&		0&		M\\
				0&		M&		L\\
				M&		L&	0\\
			\end{matrix} \right) \left( \begin{array}{c}
				f_d\\
				y_d\\
				p_d \\
			\end{array} \right) =\left( \begin{array}{c}
				0\\
				\widehat{y}_d\\
				0\\
			\end{array} \right),	
		\end{equation}    
		where $M$ corresponds to the mass matrix, $L$ represents the sum of a stiffness matrix and a mass matrix. The unknown vectors $f_d$, $y_d$, and $p_d$ denote the discretized control, state, and adjoint variable. The constant vector $\widehat{y}_d$ is derived from $\widehat{y}$ and we set it as a Gaussian function, that is, $\widehat{y}_d = {\rm exp[-50((x_1-\frac{1}{2})^2+(x_2-\frac{1}{2})^2)]}$ with the spatial coordinates $x_1$ and $x_2$. In \Cref{ex2}, we test the mesh size $h$ and the corresponding problem size $n+p+m$ as follows:
        \begin{center}
	    \begin{tabular}{ c r r r r}
		\toprule
		$h$ & $n$ & $p$ & $m$ & $n+p+m$\\
		\midrule
		$2^{-3}$ & 81 & 81 & 81 & 243\\
		$2^{-4}$ & 289 & 289 & 289 & 867 \\
		$2^{-5}$ & 1,089 & 1,089 & 1,089 & 3,267 \\
		$2^{-6}$ & 4,255 & 4,225 & 4,225 & 12,675\\
		$2^{-7}$ & 16,641 & 16,641 &  16,641 & 49,923\\
		\bottomrule
	\end{tabular}
\end{center}        
        \end{ex}

        \begin{figure}[H]
			\centering
			\subfigbottomskip=2pt
			\subfigure{
				\includegraphics[width=0.45 \linewidth]{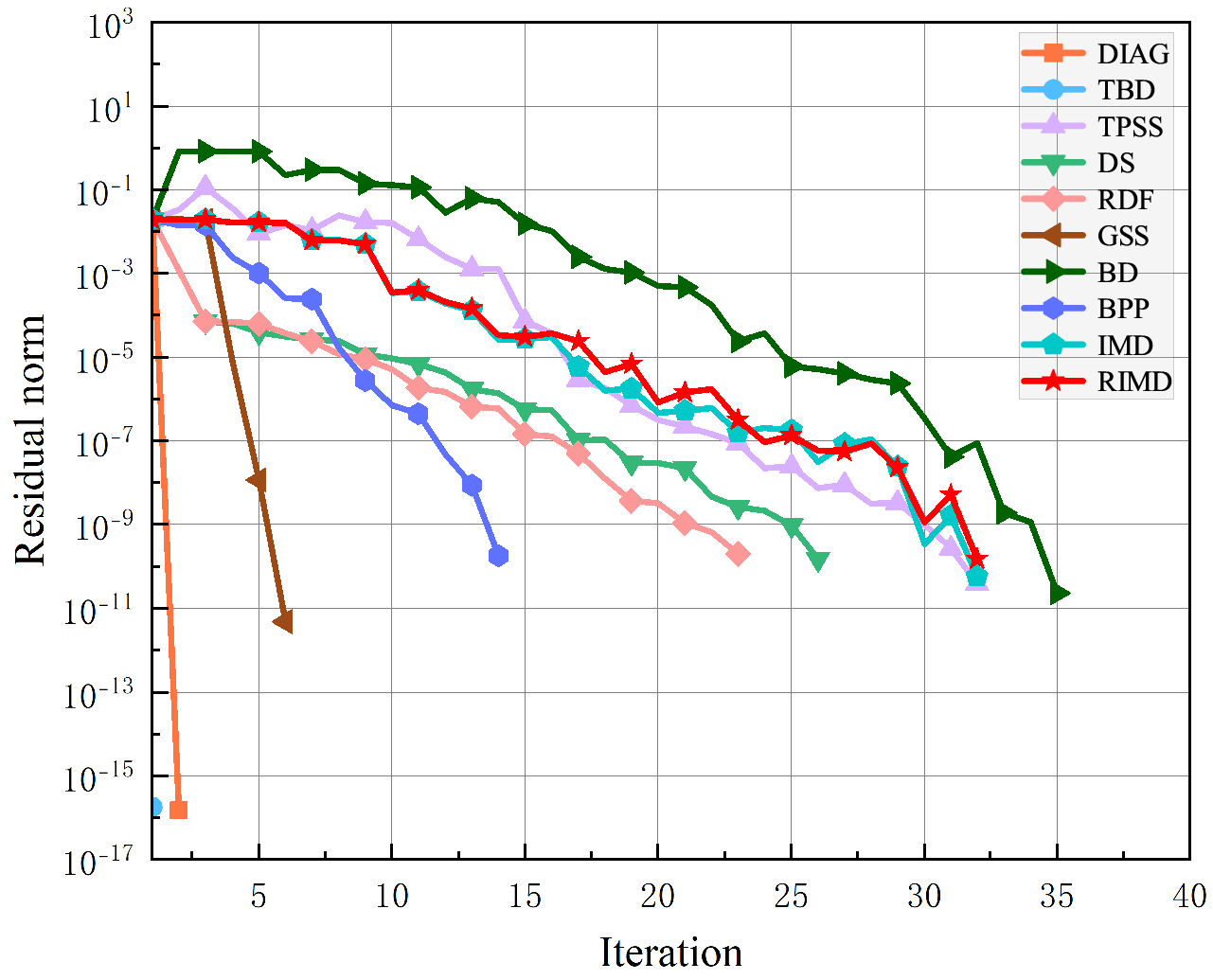}
			}
			\subfigure{
				\includegraphics[width=0.45 \linewidth]{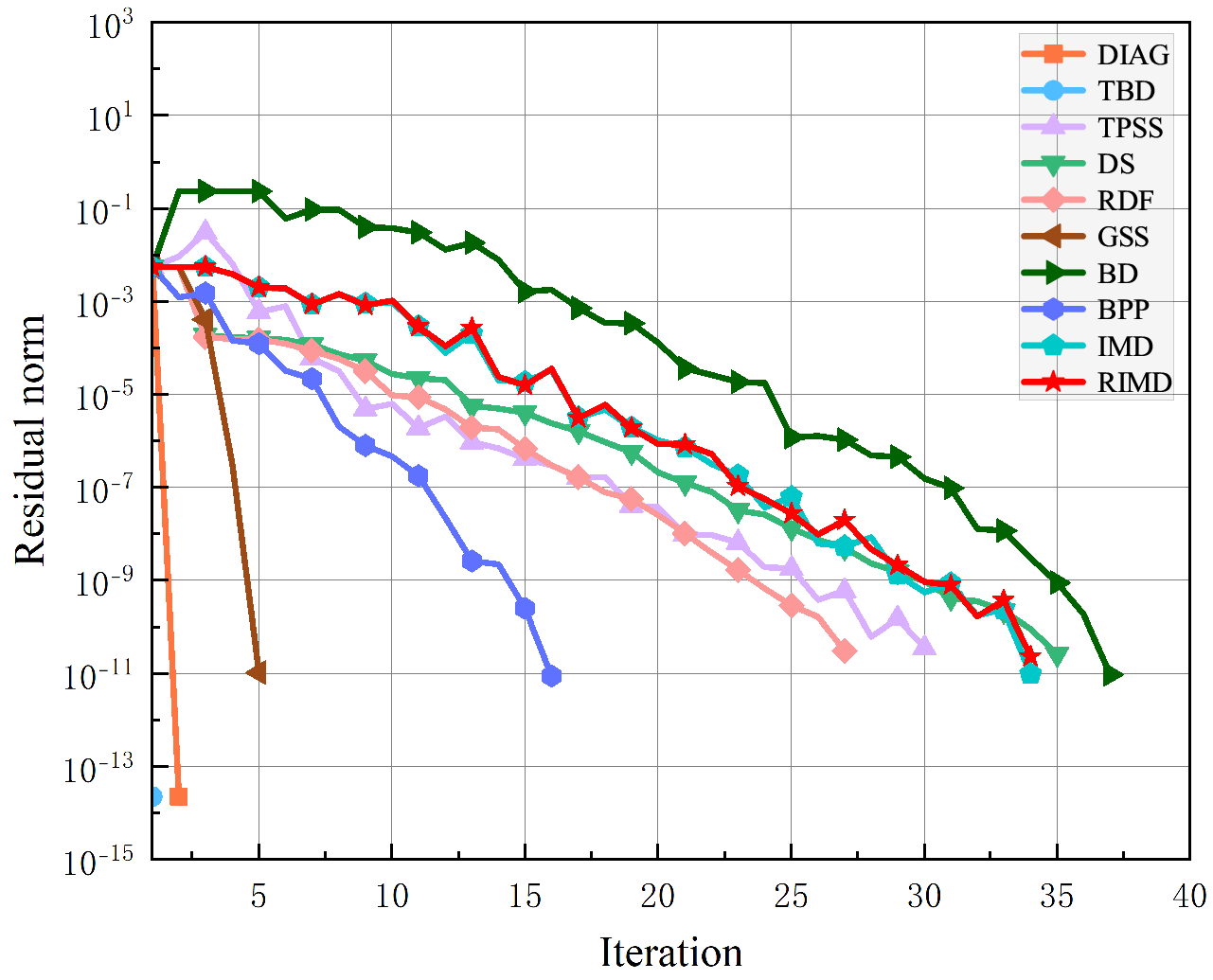}
			}
			\caption{Convergence curves of different methods for solving the double saddle-point system from \Cref{ex2} with $\gamma=10^{-4}$ and $h$ being $2^{-3}$ (left) and $2^{-5}$ (right).}
			\label{fig:res_ex2}
		\end{figure}

        In \Cref{ex2}, we set $\alpha=10^{-5}$ in $\calP_{\rm HSS}$, following a similar selection strategy as \Cref{ex1}. $\widehat{S}$ and $\widehat{X}$ are taken as $\widehat{S} = S_LS_L^T$ and $\widehat{X} = X_LX_L^T$ in $\calP_{\rm BPP}$, where $S_L$ and $X_L$ are the incomplete Cholesky factorization of $S$ and $E + C \widehat{S}^{-1}C^T$, computed by MATLAB's function ``ichol($\cdot$, opts)'' with opts.type=`ict', opts.michol=`on', and opts.droptol=1e-2 (for $S_L$) or opts.droptol=1e-6 (for $X_L$), respectively.  The parameter $\alpha$ in $\calP_{\rm DS}$ and $\calP_{\rm RDF}$ is set to $10^{-8}$. We report the numerical results for \Cref{ex2} in \Cref{table:ex2_result,table:ex2_result2,table:ex2_result3} and plot the convergence curves of the tested methods for $h = 2^{-3}$ and $h = 2^{-5}$ in \Cref{fig:res_ex2}. Since the HSS method requires a large number of iterations to satisfy the stopping criterion, we omit its convergence curve in \Cref{fig:res_ex2} for better visual clarity. \Cref{fig:eigenvalue_ex2} depicts the eigenvalue distributions of the original matrix and preconditioned matrices. As was done in \Cref{ex1}, we only show the eigenvalue distribution of $\calP^{-1}\calA$ corresponding to $Q = S + C^T{\rm diag}(E)^{-1}C$ in \Cref{fig:eigenvalue_ex2}.

        \begin{figure}[H]
			\centering
			\subfigbottomskip=2pt
			\subfigure[$\calA$]{
				\includegraphics[width=0.31 \linewidth]{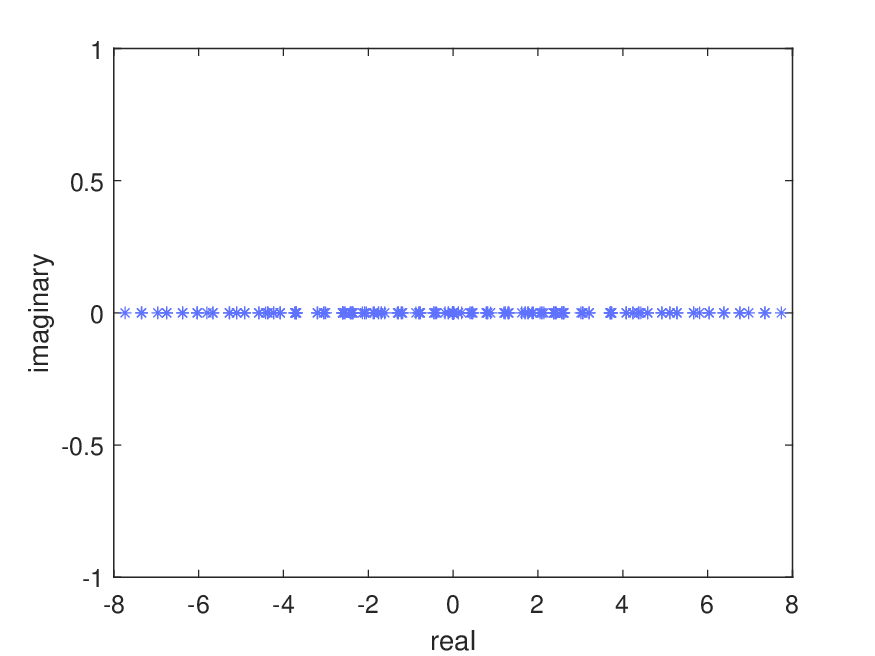}
				}
                \subfigure[$\calP_{\rm HSS}^{-1}\cal{A}$]{
				\includegraphics[width=0.31 \linewidth]{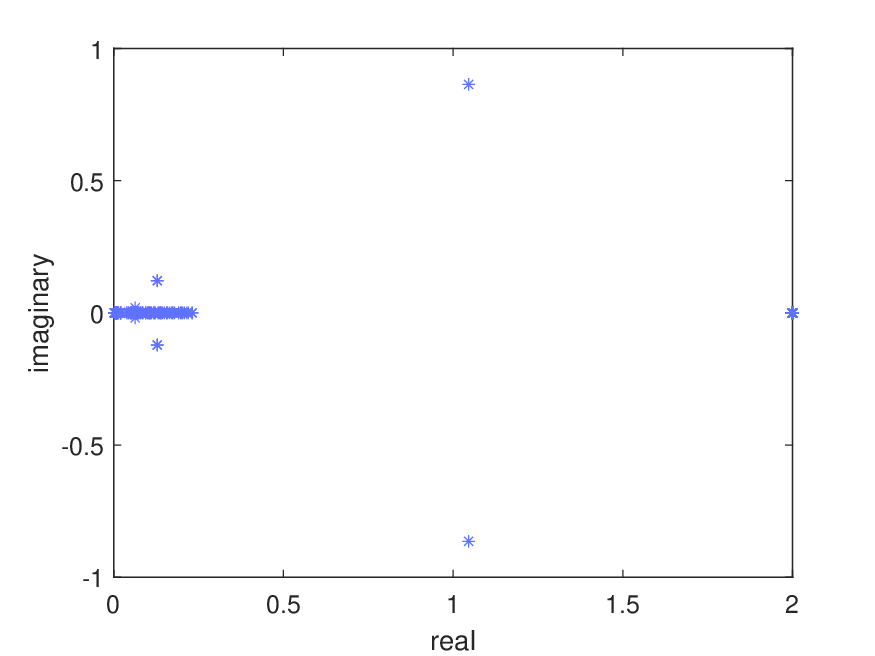}
				}
                
                \subfigure[$\calP_{\rm DIAG}^{-1}\cal{A}$]{
				\includegraphics[width=0.31 \linewidth]{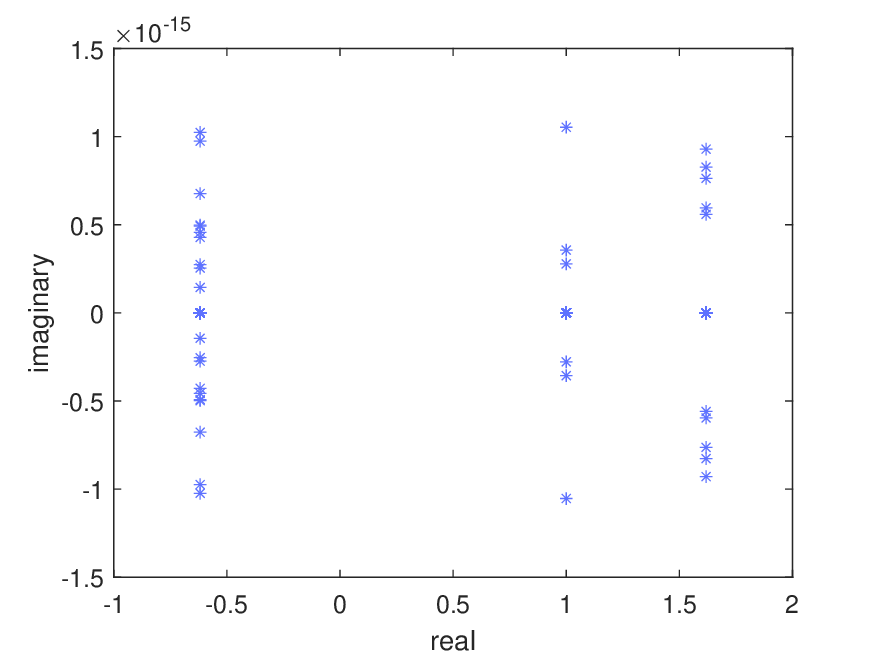}
				}
                \subfigure[$\calP_{\rm TBD}^{-1}\cal{A}$]{
				\includegraphics[width=0.31 \linewidth]{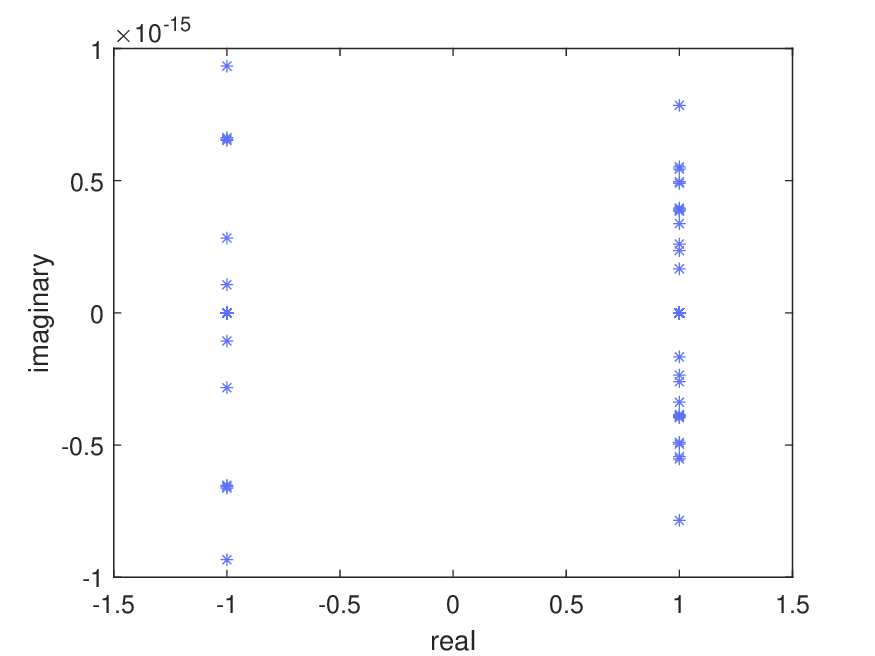}
				}
                \subfigure[$\calP_{\rm TPSS}^{-1}\cal{A}$]{
				\includegraphics[width=0.31 \linewidth]{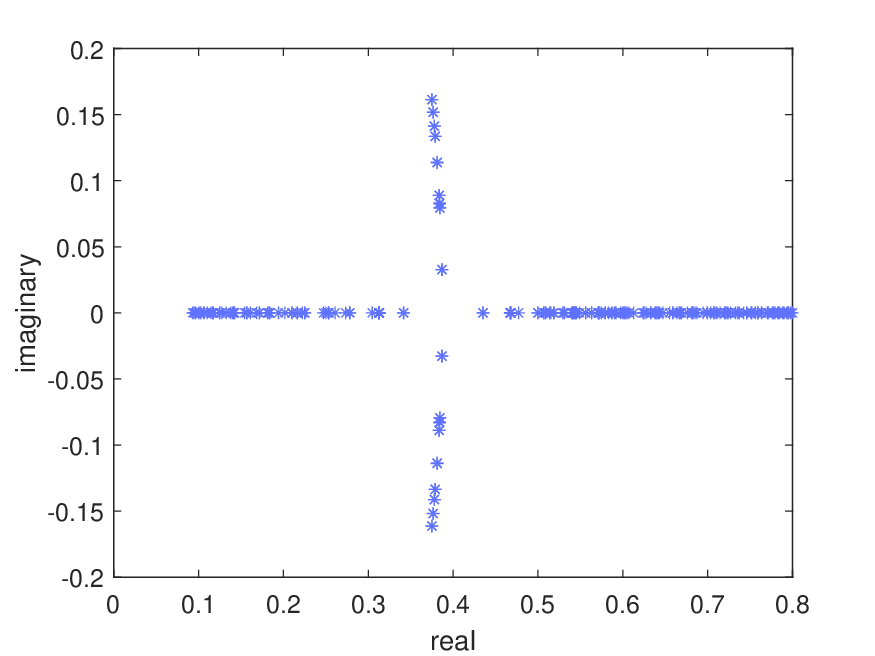}
				}
                
                \subfigure[$\calP_{\rm DS}^{-1}\mathcal{B}$]{
				\includegraphics[width=0.31 \linewidth]{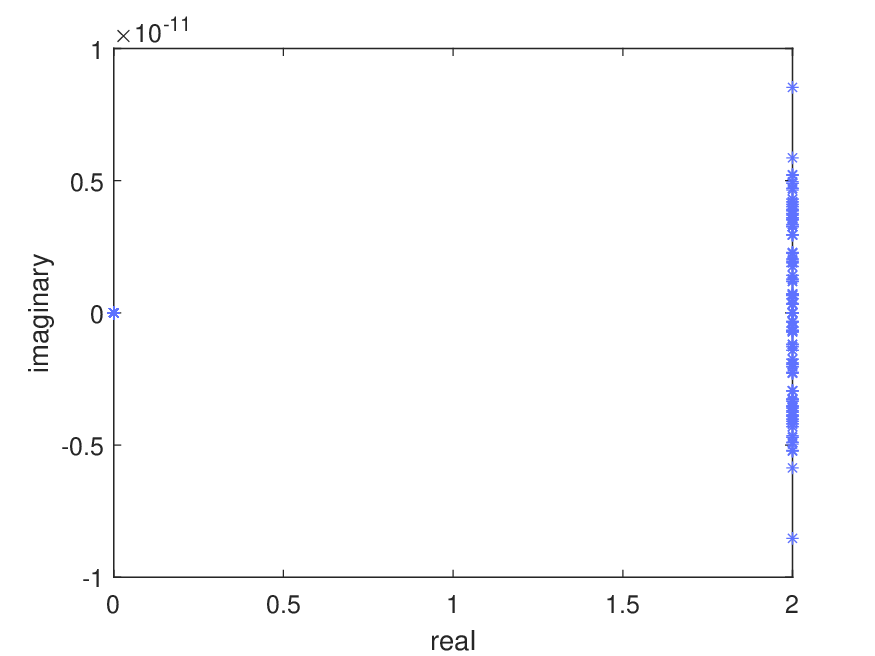}
				}
			\subfigure[$\calP_{\rm RDF}^{-1}\mathcal{B}$]{
				\includegraphics[width=0.31 \linewidth]{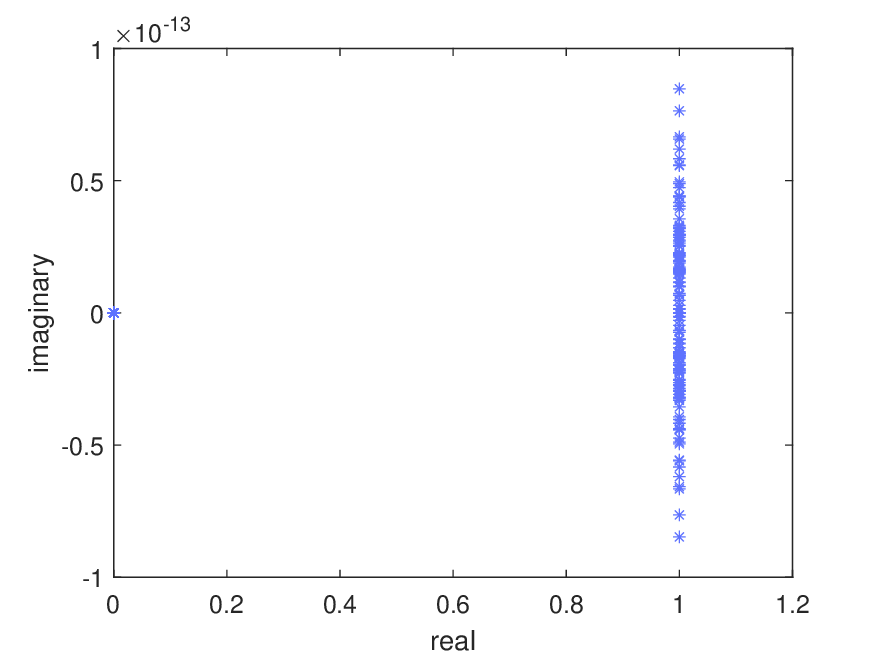}
				}
			\subfigure[$\calP_{\rm GSS}^{-1}\mathcal{B}$]{
				\includegraphics[width=0.31 \linewidth]{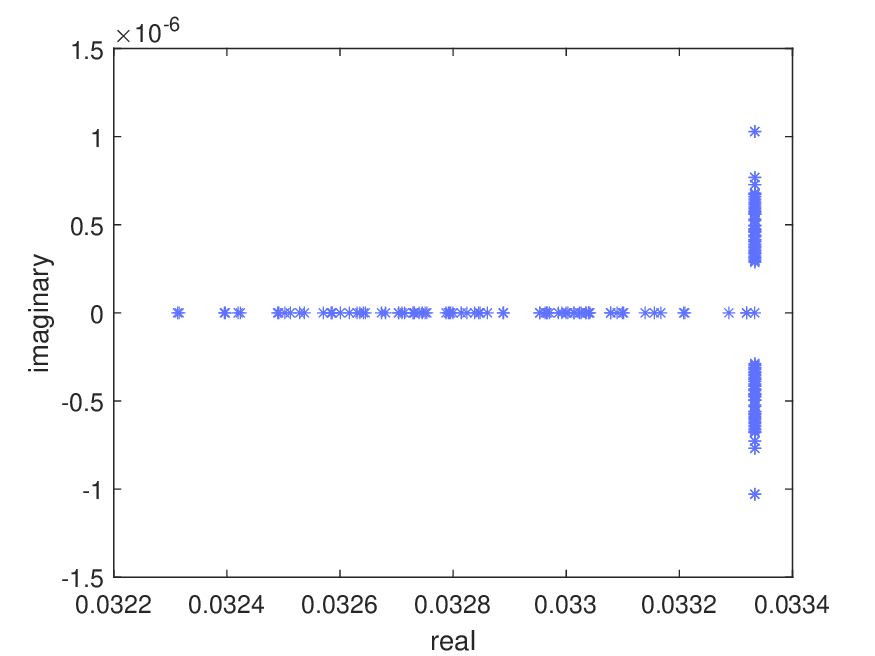}
				}
                
			\subfigure[$\calP_{\rm BD}^{-1}\mathcal{C}$]{
				\includegraphics[width=0.31 \linewidth]{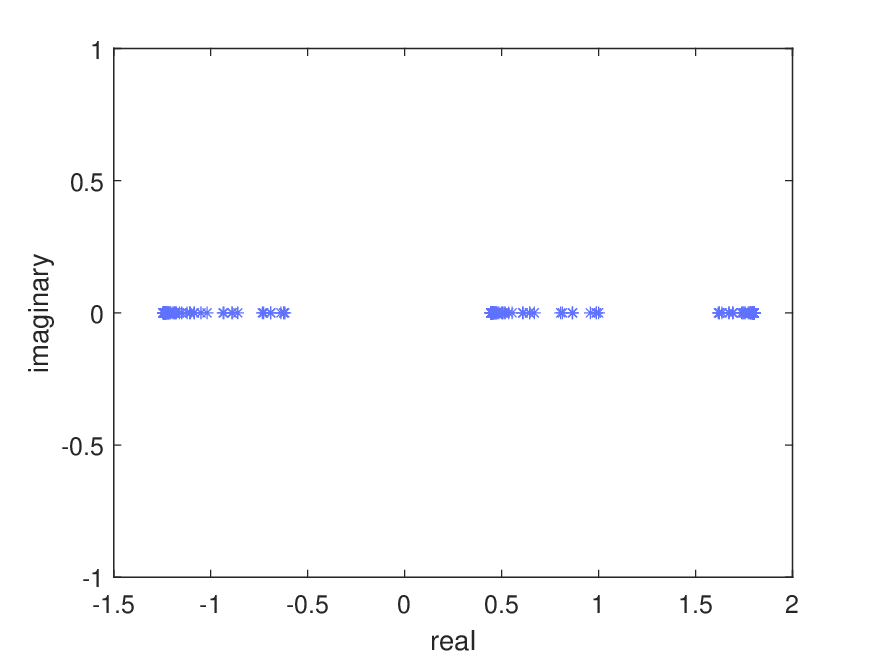}
			}
			\subfigure[$\calP_{\rm BPP}^{-1}\mathcal{C}$]{
				\includegraphics[width=0.31 \linewidth]{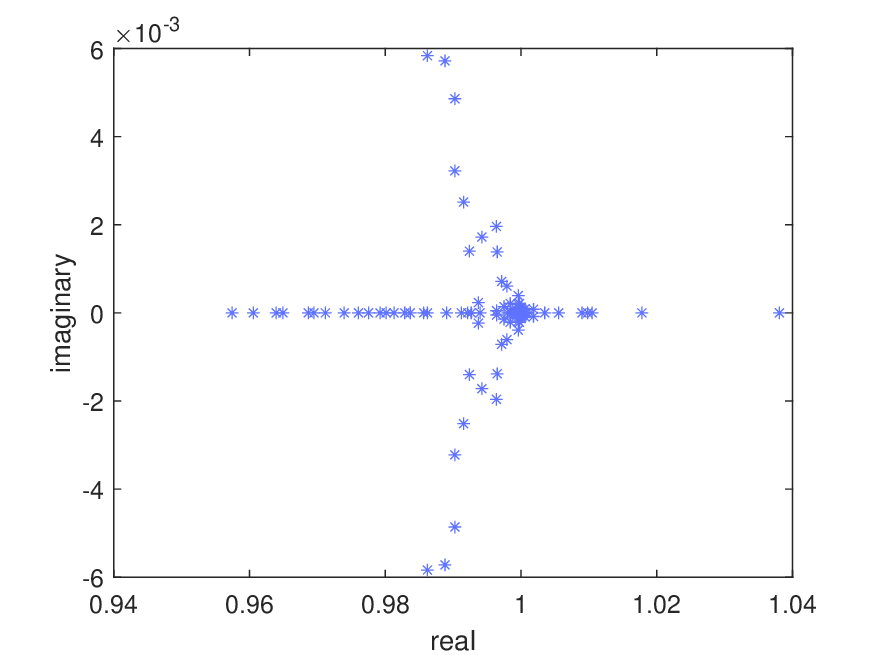}
			}
            \subfigure[$\calP^{-1}\calA$]{
				\includegraphics[width=0.31 \linewidth]{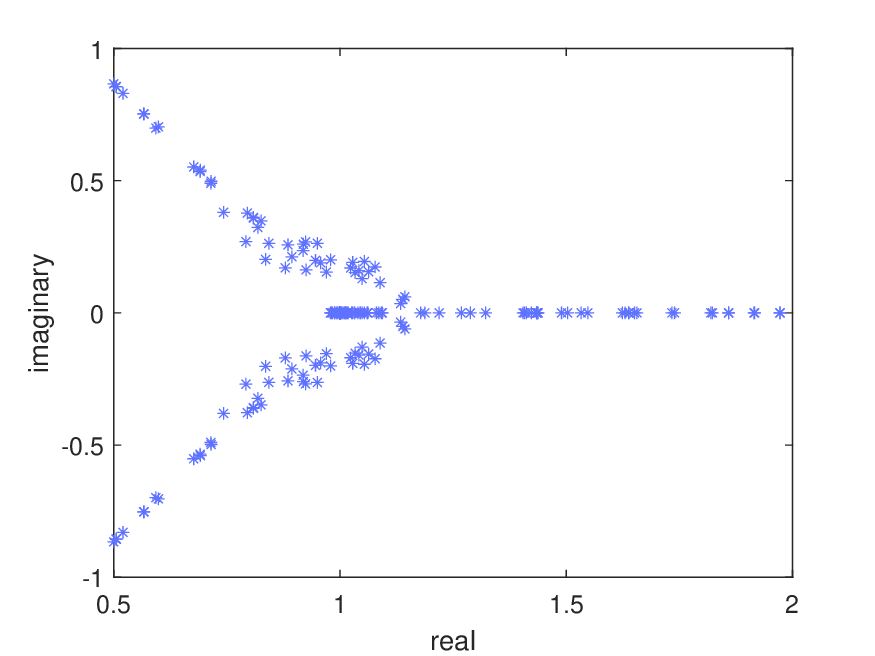}
			}
			\caption{Eigenvalue distributions of original coefficient matrix and the preconditioned matrices for \Cref{ex2} with $\gamma=10^{-4}$ and $h = 2^{-3}$.}
			\label{fig:eigenvalue_ex2}
		\end{figure}

        \Cref{table:ex2_result,table:ex2_result2,table:ex2_result3} illustrate that IMD and RIMD outperforms the other tested methods in terms of CPU time. Similar to the results in \Cref{ex1}, the randomized low-rank approximation after diagonal approximation performs well, demonstrating the feasibility and effectiveness of the randomized approach during preconditioner construction. Although DIAG and TBD exhibit good robustness with respect to the parameters, the efficiency of HSS, DIAG, TBD, and TPSS is far inferior to that of BPP, IMD, and RIMD, and this difference gradually becomes more evident as the scale of the problem increases. DS and RDF fail to solve the double saddle-point problem from \Cref{ex2} when $h \le 2^{-6}$, and TPSS fails when $h \le 2^{-7}$. While GSS performs favorably in terms of the number of iterations, it suffers from a clear disadvantage in CPU time relative to other methods. IMD, RIMD, and BPP enjoy substantial advantages in CPU time over other tested methods. This demonstrates that the inexact preconditioning technique, which avoids the explicit computation and storage of the Schur complement, is highly effective for solving double saddle-point system. Furthermore, it can be found that RIMD exhibits $h$-robustness. From \Cref{fig:res_ex2}, it is clear that the residual norms of all tested methods exhibit an overall decreasing trend with iterations and satisfy the stopping criterion in finite steps. We can see from \Cref{fig:eigenvalue_ex2} that the eigenvalues of all preconditioned matrices are more clustered than those of the original coefficient matrix.

        \begin{table}[htb]\small
			\caption{Numerical results for saddle-point system from \Cref{ex2} with $\gamma=10^{-4}$.}
			\centering
			\label{table:ex2_result2}
			\setlength{\tabcolsep}{2.5mm}{
				\begin{tabular}{@{}ccccccccccccccc@{}}
					\toprule
					\multirow{2}{*}{Methods} & \multirow{2}{*}{} & \multicolumn{5}{c}{$h$}                        \\ \cmidrule(l){3-7}
					&    & $2^{-3}$  & $2^{-4}$  & $2^{-5}$  & $2^{-6}$  & $2^{-7}$   \\ \midrule
                    HSS 
                    & IT & 81 & 104 & 323 & - & - \\ 
                    & CPU & 0.05 & 0.16 & 9.06 & - & - \\
                    & RES & 5.40e-09 & 5.12e-09 & 9.19e-09 & - & - \\
					DIAG
                    & IT & 2 & 2 & 2 & 2 & 2\\
                    & CPU & 0.09 & 0.17 & 0.23 & 3.73 & 147.17\\
                    & RES & 7.55e-15 & 1.57e-13 & 4.08e-12 & 8.36e-11 & 1.45e-09\\
                    TBD
                    & IT & 1 & 1 & 1 & 1 & 1\\
                    & CPU & 0.02 & 0.06 & 0.16 & 3.40 & 133.46\\
                    & RES & 9.01e-15 & 1.51e-13 & 4.14e-12 & 8.39e-11 & 1.45e-09\\
                    TPSS
                    & IT & 32 & 29 & 30 & 30 & -\\
                    & CPU & 0.03 & 0.12 & 1.66 & 50.99 & -\\
                    & RES & 1.98e-09 & 9.87e-09 & 6.47e-09 & 7.47e-09 & -\\
                    DS
                    & IT & 26 & 32 & 35 & - & -\\
                    & CPU & 0.02 & 0.08 & 0.52 & - & -\\
                    & RES & 7.61e-09 & 6.76e-09 & 4.78e-09 & - & -\\
                    RDF
                    & IT & 23 & 26 & 27 & - & -\\
                    & CPU & 0.02 & 0.06 & 0.31 & - & -\\
                    & RES & 1.00e-08 & 6.00e-09 & 5.70e-09 & - & -\\
                    GSS
                    & IT & 6 & 5 & 5 & 5 & 6\\
                    & CPU & 0.02 & 0.10 & 0.37 & 13.23 & 357.78\\
                    & RES & 2.44e-10 & 1.54e-09 & 1.91e-09 & 8.40e-09 & 3.18e-10\\
                    BD
                    & IT & 35 & 37 & 37 & 37 & 37\\
                    & CPU & 0.03 & 0.10 & 0.45 & 12.87 & 293.99\\
                    & RES & 1.16e-09 & 3.15e-09 & 1.75e-09 & 1.66e-09 & 2.92e-09\\
                    BPP
                    & IT & 14 & 16 & 16 & 16 & 20\\
                    & CPU & 0.01 & 0.03 & 0.13 & 0.63 & 3.17\\
                    & RES & 9.11e-09 & 2.43e-09 & 1.64e-09 & 1.66e-09 & 9.56e-09\\
                    IMD
                    & IT & 32 & 32 & 34 & 38 & 41\\
                    & CPU & 0.02 & 0.03 & 0.08 & 0.52 & 2.34\\
                    & RES & 2.89e-09 & 8.10e-09 & 1.78e-09 & 4.99e-09 & 8.58e-09\\
                    RIMD
                    & IT & 32 & 34 & 34 & 36 & 41\\
                    & CPU & 0.02 & 0.02 & 0.08 & 0.48 & 2.26\\
                    & RES & 7.64e-09 & 2.14e-09 & 4.25e-09 & 9.64e-09 & 8.58e-09\\
					\hline
			\end{tabular}}
		\end{table}

        \begin{table}[htb]\small
			\caption{Numerical results for saddle-point system from \Cref{ex2} with $\gamma=10^{-5}$.}
			\centering
			\label{table:ex2_result3}
			\setlength{\tabcolsep}{2.5mm}{
				\begin{tabular}{@{}ccccccccccccccc@{}}
					\toprule
					\multirow{2}{*}{Methods} & \multirow{2}{*}{} & \multicolumn{5}{c}{$h$}                        \\ \cmidrule(l){3-7}
					&    & $2^{-3}$  & $2^{-4}$  & $2^{-5}$  & $2^{-6}$  & $2^{-7}$   \\ \midrule
                    HSS 
                    & IT & 108 & 272 & 576 & - & - \\ 
                    & CPU & 0.07 & 0.71 & 18.60 & - & - \\
                    & RES & 8.06e-09 & 9.76e-09 & 9.15e-09 & - & - \\
					DIAG
                    & IT & 2 & 2 & 2 & 2 & 2\\
                    & CPU & 0.03 & 0.11 & 0.16 & 5.87 & 156.04\\
                    & RES & 1.29e-15 & 2.15e-14 & 4.83e-13 & 1.06e-11 & 2.01e-10\\
                    TBD
                    & IT & 1 & 1 & 1 & 1 & 1\\
                    & CPU & 0.02 & 0.04 & 0.15 & 5.06 & 148.13\\
                    & RES & 1.42e-15 & 2.19e-14 & 4.85e-13 & 1.06e-11 & 2.02e-10\\
                    TPSS
                    & IT & 32 & 30 & 25 & 25 & -\\
                    & CPU & 0.05 & 0.10 & 1.76 & 57.30 & -\\
                    & RES & 3.78e-09 & 2.94e-09 & 7.18e-09 & 4.19e-09 & -\\
                    DS
                    & IT & 28 & 40 & 50 & - & -\\
                    & CPU & 0.12 & 0.22 & 0.70 & - & -\\
                    & RES & 6.39e-09 & 9.09e-09 & 8.01e-09 & - & -\\
                    RDF
                    & IT & 24 & 29 & 31 & - & -\\
                    & CPU & 0.03 & 0.10 & 0.34 & - & -\\
                    & RES & 8.07e-09 & 8.07e-09 & 8.59e-09 & - & -\\
                    GSS
                    & IT & 7 & 6 & 5 & - & -\\
                    & CPU & 0.03 & 0.10 & 0.36 & - & -\\
                    & RES & 2.43e-09 & 1.70e-09 & 1.64e-09 & - & -\\
                    BD
                    & IT & 48 & 49 & 48 & 48 & 48\\
                    & CPU & 0.03 & 0.06 & 0.53 & 15.73 & 343.51\\
                    & RES & 2.25e-09 & 9.72e-09 & 4.39e-09 & 3.67e-09 & 3.63e-09\\
                    BPP
                    & IT & 16 & 16 & 16 & 16 & 16\\
                    & CPU & 0.02 & 0.03 & 0.19 & 0.67 & 3.46\\
                    & RES & 2.55e-09 & 9.59e-09 & 1.01e-09 & 2.23e-09 & 8.91e-09\\
                    IMD
                    & IT & 29 & 33 & 35 & 39 & 43\\
                    & CPU & 0.02 & 0.06 & 0.13 & 0.58 & 2.64\\
                    & RES & 8.09e-09 & 7.01e-09 & 4.58e-09 & 3.16e-09 & 7.42e-09\\
                    RIMD
                    & IT & 29 & 32 & 35 & 39 & 43\\
                    & CPU & 0.03 & 0.03 & 0.09 & 0.53 & 2.57\\
                    & RES & 8.13e-09 & 9.41e-09 & 4.55e-09 & 2.69e-09 & 7.42e-09\\
					\hline
			\end{tabular}}
		\end{table}

\section{Conclusions}\label{sec:conclusion}
	
This work developed efficient preconditioning strategies for double saddle-point systems by exploiting their hierarchical block structure. The proposed framework enables the construction of effective inexact block triangular preconditioners while reducing computational and storage costs. Theoretical analysis and numerical results demonstrate the robustness of the proposed approaches and highlight the potential of randomized approximation techniques for large-scale saddle-point problems.

Future work will focus on adaptive approximation strategies to further balance accuracy and efficiency. It is also of interest to extend the proposed framework to broader classes of saddle-point systems and explore more advanced randomized techniques for preconditioner design.

	\vspace{1cm}
	
	\noindent\textbf{Acknowledgements} We are grateful to Professor Andreas Potschka for providing the codes for generating the test matrices of Example 2 in our numerical experiments.

	\vspace{1cm}
		
	\noindent\textbf{Data Availability} No new experimental data were generated in this work. All data used are available from the corresponding author upon reasonable request.
	
	\section*{Declarations}

	\noindent\textbf{Conflict of interest} The authors have not disclosed any competing interests.

	\bibliographystyle{abbrvnat}
	\bibliography{references}

@string{bit = {BIT Numer. Math.}}

@string{siam = {SIAM J. Appl. Math.}}

@article{rees2010optimal,
	title={Optimal solvers for {PDE}-constrained optimization},
	author={Rees, Tyrone and Dollar, H Sue and Wathen, Andrew J},
	journal={SIAM J. Sci. Comput.},
	volume={32},
	number={1},
	pages={271--298},
	year={2010},
	publisher={SIAM}
}

@article{ahmad2026class,
	title={A class of generalized shift-splitting preconditioners for double saddle point problems},
	author={Ahmad, Sk Safique and Khatun, Pinki},
	journal={Appl. Math. Comput.},
	volume={509},
	pages={129658},
	year={2026},
	publisher={Elsevier}
}

@article{bradley2023eigenvalue,
	title={Eigenvalue bounds for double saddle-point systems},
	author={Bradley, Susanne and Greif, Chen},
	journal={IMA J. Numer. Anal.},
	volume={43},
	number={6},
	pages={3564--3592},
	year={2023},
	publisher={Oxford University Press}
}

@misc{tyronerees,
	author={Tyrone Rees},
	title={Github-tyronerees/poisson-control},
	year={2010},
	howpublished={\url{https://github.com/tyronerees/poisson-control}},
	note={Retrieved: 2026/1/29},
}

@article{bergamaschi2025spectral,
	title={Spectral analysis of block preconditioners for double saddle-point linear systems with application to {PDE}-constrained optimization},
	author={Bergamaschi, Luca and Mart{\'\i}nez, {\'A}ngeles and Pearson, John W and Potschka, Andreas},
	journal={Comput. Optim. Appl.},
	volume={91},
	number={2},
	pages={423--455},
	year={2025},
	publisher={Springer}
}

@article{benzi2011dimensional,
	title={A dimensional split preconditioner for {Stokes} and linearized {Navier--Stokes} equations},
	author={Benzi, Michele and Guo, Xueping},
	journal={Appl. Numer. Math.},
	volume={61},
	number={1},
	pages={66--76},
	year={2011},
	publisher={Elsevier}
}

@article{benzi2011relaxed,
	title={A relaxed dimensional factorization preconditioner for the incompressible {Navier--Stokes} equations},
	author={Benzi, Michele and Ng, Michael and Niu, Qiang and Wang, Zhen},
	journal={J. Comput. Phys.},
	volume={230},
	number={16},
	pages={6185--6202},
	year={2011},
	publisher={Elsevier}
}

@article{ai2024multi,
	title={Multi-parameter dimensional split preconditioner for three-by-three block system of linear equations},
	author={Yang, Aili and Zhu, Junli and Wu, Yujiang},
	journal={Numer. Algor.},
	volume={95},
	number={2},
	pages={721--745},
	year={2024},
	publisher={Springer}
}

@article{pearson2024symmetric,
	title={On symmetric positive definite preconditioners for multiple saddle-point systems},
	author={Pearson, John W and Potschka, Andreas},
	journal={IMA J. Numer. Anal.},
	volume={44},
	number={3},
	pages={1731--1750},
	year={2024},
	publisher={Oxford University Press}
}

@article{rhebergen2015three,
	title={Three-field block preconditioners for models of coupled magma/mantle dynamics},
	author={Rhebergen, Sander and Wells, Garth N and Wathen, Andrew J and Katz, Richard F},
	journal={SIAM J. Sci. Comput.},
	volume={37},
	number={5},
	pages={A2270--A2294},
	year={2015},
	publisher={SIAM}
}

@book{elman2014finite,
	title={Finite elements and fast iterative solvers: with applications in incompressible fluid dynamics},
	author={Elman, Howard C and Silvester, David J and Wathen, Andrew J},
	year={2014},
	publisher={Oxford university press}
}

@article{han2013local,
	title={Local linear convergence of the alternating direction method of multipliers for quadratic programs},
	author={Han, Deren and Yuan, Xiaoming},
	journal={SIAM J. Numer. Anal.},
	volume={51},
	number={6},
	pages={3446--3457},
	year={2013},
	publisher={SIAM}
}

@article{huang2016consensus,
	title={{Consensus-ADMM} for general quadratically constrained quadratic programming},
	author={Huang, Kejun and Sidiropoulos, Nicholas D},
	journal={IEEE T. Signal Proces.},
	volume={64},
	number={20},
	pages={5297--5310},
	year={2016},
	publisher={IEEE}
}

@article{yuan1996numerical,
	title={Numerical methods for generalized least squares problems},
	author={Yuan, Jin Yun},
	journal={J. Comput. Appl. Math.},
	volume={66},
	pages={571--584},
	year={1996},
}

@article{bojanczyk2003equality,
	title={The equality constrained indefinite least squares problem: theory and algorithms},
	author={Bojanczyk, Adam and Higham, Nicholas J and Patel, Harikrishna},
	journal={BIT Numer. Math.},
	volume={43},
	number={3},
	pages={505--517},
	year={2003},
	publisher={Springer}
}

@book{bjorck2024numerical,
	title={Numerical methods for least squares problems},
	author={Bj{\"o}rck, {\AA}ke},
	year={2024},
	publisher={SIAM}
}

@article{ramage2013preconditioned,
	title={A preconditioned nullspace method for liquid crystal director modeling},
	author={Ramage, Alison and Gartland Jr, Eugene C},
	journal={SIAM J. Sci. Comput.},
	volume={35},
	number={1},
	pages={B226--B247},
	year={2013},
	publisher={SIAM}
}

@article{cai2009preconditioning,
	title={Preconditioning techniques for a mixed {Stokes/Darcy} model in porous media applications},
	author={Cai, Mingchao and Mu, Mo and Xu, Jinchao},
	journal={J. Comput. Appl. Math.},
	volume={233},
	number={2},
	pages={346--355},
	year={2009},
	publisher={Elsevier}
}

@article{ali2018iterative,
	title={Iterative methods for double saddle point systems},
	author={Beik, Fatemeh Panjeh Ali and Benzi, Michele},
	journal={SIAM J. Matrix Anal. Appl.},
	volume={39},
	number={2},
	pages={902--921},
	year={2018},
	publisher={SIAM}
}

@article{huang2023gsor,
	title={On {GSOR}, the generalized successive overrelaxation method for double saddle-point problems},
	author={Huang, Na and Dai, Yuhong and Orban, Dominique and Saunders, Michael A},
	journal={SIAM J. Sci. Comput.},
	volume={45},
	number={5},
	pages={A2185--A2206},
	year={2023},
	publisher={SIAM}
}

@article{dou2023class,
	title={A class of block alternating splitting implicit iteration methods for double saddle point linear systems},
	author={Dou, Yan and Liang, Zhaozheng},
	journal={Numer. Linear Algebra Appl.},
	volume={30},
	number={1},
	pages={e2455},
	year={2023},
	publisher={Wiley Online Library}
}

@article{beik2022preconditioning,
	title={Preconditioning techniques for the coupled {Stokes--Darcy} problem: spectral and field-of-values analysis},
	author={Beik, Fatemeh Panjeh Ali and Benzi, Michele},
	journal={Numer. Math.},
	volume={150},
	number={2},
	pages={257--298},
	year={2022},
	publisher={Springer}
}

@article{cao2019shift,
	title={Shift-splitting preconditioners for a class of block three-by-three saddle point problems},
	author={Cao, Yang},
	journal={Appl. Math. Lett.},
	volume={96},
	pages={40--46},
	year={2019},
	publisher={Elsevier}
}

@article{zhang2022lopsided,
	title={Lopsided shift-splitting preconditioner for saddle point problems with three-by-three structure.},
	author={Zhang, Na and Li, Ruixia and Li, Jian},
	journal={Comput. Appl. Math.},
	volume={41},
	number={6},
    pages={261},
	year={2022},
    publisher={Springer}
}

@article{ahmad2025robust,
	title={A robust parameterized enhanced shift-splitting preconditioner for three-by-three block saddle point problems},
	author={Ahmad, Sk Safique and Khatun, Pinki},
	journal={J. Comput. Appl. Math.},
	volume={459},
	pages={116358},
	year={2025},
	publisher={Elsevier}
}

@article{huang2019uzawa,
	title={Uzawa methods for a class of block three-by-three saddle-point problems},
	author={Huang, Na and Dai, Yuhong and Hu, QiYa},
	journal={Numer. Linear Algebra Appl.},
	volume={26},
	number={6},
	pages={e2265},
	year={2019},
	publisher={Wiley Online Library}
}

@article{huang2020variable,
	title={Variable parameter {Uzawa} method for solving a class of block three-by-three saddle point problems},
	author={Huang, Na},
	journal={Numer. Algor.},
	volume={85},
	number={4},
	pages={1233--1254},
	year={2020},
	publisher={Springer}
}

@book{saad2003iterative,
	title={Iterative methods for sparse linear systems},
	author={Saad, Yousef},
	year={2003},
	publisher={SIAM}
}

@article{huang2019spectral,
	title={Spectral analysis of the preconditioned system for the 3 $\times$ 3 block saddle point problem},
	author={Huang, Na and Ma, Changfeng},
	journal={Numer. Algor.},
	volume={81},
	number={2},
	pages={421--444},
	year={2019},
	publisher={Springer}
}

@article{abdolmaleki2022new,
	title={A new block-diagonal preconditioner for a class of 3 $\times$ 3 block saddle point problems},
	author={Abdolmaleki, Maryam and Karimi, Saeed and Salkuyeh, Davod Khojasteh},
	journal={Mediterr. J. Math.},
	volume={19},
	number={1},
	pages={43},
	year={2022},
	publisher={Springer}
}

@article{aslani2023block,
	title={A block triangular preconditioner for a class of three-by-three block saddle point problems},
	author={Aslani, Hamed and Salkuyeh, Davod Khojasteh},
	journal={Jpn. J. Ind. Appl. Math.},
	volume={40},
	number={2},
	pages={1015--1030},
	year={2023},
	publisher={Springer}
}

@article{balani2024some,
	title={Some preconditioning techniques for a class of double saddle point problems},
	author={Balani Bakrani, Fariba and Bergamaschi, Luca and Mart{\'\i}nez, {\'A}ngeles and Hajarian, Masoud},
	journal={Numer. Linear Algebra Appl.},
	volume={31},
	number={4},
	pages={e2551},
	year={2024},
	publisher={Wiley Online Library}
}

@article{liang2024improvement,
	title={On the improvement of shift-splitting preconditioners for double saddle point problems},
	author={Liang, Zhaozheng and Zhu, Muzheng},
	journal={J. Appl. Math. Comput.},
	volume={70},
	number={2},
	pages={1339--1363},
	year={2024},
	publisher={Springer}
}

@article{salkuyeh2021alternating,
	title={An alternating positive semidefinite splitting preconditioner for the three-by-three block saddle point problems},
	author={Salkuyeh, Davod Khojasteh and Aslani, Hamed and Liang, Zhaozheng},
	journal={Math. Commun.},
	volume={26},
	number={2},
	pages={177--195},
	year={2021},
	publisher={Sveu{\v{c}}ili{\v{s}}te Josipa Jurja Strossmayera u Osijeku, Odjel za matematiku}
}

@article{li2025uzawa,
	title={The {Uzawa-type} shift-splitting preconditioners for double saddle point problems},
	author={Li, Chengliang and Xu, Ying and Ma, Changfeng},
	journal={J. Appl. Math. Comput.},
	volume={71},
	number={5},
	pages={7837--7861},
	year={2025},
	publisher={Springer}
}

@article{pearson2014preconditioners,
	title={Preconditioners for state-constrained optimal control problems with {Moreau--Yosida} penalty function},
	author={Pearson, John W and Stoll, Martin and Wathen, Andrew J},
	journal={Numer. Linear Algebra Appl.},
	volume={21},
	number={1},
	pages={81--97},
	year={2014},
	publisher={Wiley Online Library}
}

@article{fan2024preconditioners,
	title={Preconditioners based on matrix splitting for the structured systems from elliptic {PDE-constrained} optimization problems},
	author={Fan, Hongtao and Li, Yajing and Zhang, Hongbing and Zhu, Xinyun},
	journal={Appl. Math. Comput.},
	volume={463},
	pages={128341},
	year={2024},
	publisher={Elsevier}
}

@article{ke2018some,
	title={Some preconditioners for elliptic {PDE-constrained} optimization problems},
	author={Ke, Yifen and Ma, Changfeng},
	journal={Comput. Math. Appl.},
	volume={75},
	number={8},
	pages={2795--2813},
	year={2018},
	publisher={Elsevier}
}

@article{rees2010block,
	title={Block-triangular preconditioners for {PDE-constrained} optimization},
	author={Rees, Tyrone and Stoll, Martin},
	journal={Numer. Linear Algebra Appl.},
	volume={17},
	number={6},
	pages={977--996},
	year={2010},
	publisher={Wiley Online Library}
}

@article{zhang2014block,
	title={On block preconditioners for {PDE-constrained} optimization problems},
	author={Zhang, Xiaoying and Huang, Yumei},
	journal={J. Comput. Math.},
    volume={32},
	pages={272--283},
	year={2014},
	publisher={JSTOR}
}

@article{pearson2012new,
	title={A new approximation of the {Schur} complement in preconditioners for {PDE-constrained} optimization},
	author={Pearson, John W and Wathen, Andrew J},
	journal={Numer. Linear Algebra Appl.},
	volume={19},
	number={5},
	pages={816--829},
	year={2012},
	publisher={Wiley Online Library}
}

@article{liang2025inexact,
	title={Inexact block triangular preconditioners for double saddle-point systems arising from coupled {Stokes--Darcy} model},
	author={Liang, Siqi and Huang, Na},
	journal={J. Comput. Appl. Math.},
	volume={476},
	pages={117079},
	year={2026},
	publisher={Elsevier}
}

@article{halko2011finding,
	title={Finding structure with randomness: {Probabilistic} algorithms for constructing approximate matrix decompositions},
	author={Halko, Nathan and Martinsson, Per-Gunnar and Tropp, Joel A},
	journal={SIAM Rev.},
	volume={53},
	number={2},
	pages={217--288},
	year={2011},
	publisher={SIAM}
}

@article{martinsson2020randomized,
	title={Randomized numerical linear algebra: {Foundations} and algorithms},
	author={Martinsson, Per-Gunnar and Tropp, Joel A},
	journal={Acta Numer.},
	volume={29},
	pages={403--572},
	year={2020},
	publisher={Cambridge University Press}
}

@article{murray2023randomized,
	title={Randomized numerical linear algebra: {A} perspective on the field with an eye to software},
	author={Murray, Riley and Demmel, James and Mahoney, Michael W and Erichson, N Benjamin and Melnichenko, Maksim and Malik, Osman Asif and Grigori, Laura and Luszczek, Piotr and Derezi{\'n}ski, Micha{\l} and Lopes, Miles E and others},
	journal={arXiv preprint arXiv:2302.11474},
	year={2023}
}

@article{frangella2023randomized,
	title={Randomized {Nystr{\"o}m} preconditioning},
	author={Frangella, Zachary and Tropp, Joel A and Udell, Madeleine},
	journal={SIAM J. Matrix Anal. Appl.},
	volume={44},
	number={2},
	pages={718--752},
	year={2023},
	publisher={SIAM}
}

@article{diaz2023robust,
	title={Robust, randomized preconditioning for kernel ridge regression},
	author={D{\'\i}az, Mateo and Epperly, Ethan N and Frangella, Zachary and Tropp, Joel A and Webber, Robert J},
	journal={arXiv preprint arXiv:2304.12465},
	year={2023}
}

@article{balabanov2025preconditioning,
	title={Preconditioning via randomized range deflation {(RandRAND)}},
	author={Balabanov, Oleg and Ju, Caleb and He, Kaiwen and Jeendgar, Aryaman and Mahoney, Michael W},
	journal={arXiv preprint arXiv:2509.19747},
	year={2025}
}

@article{woodruff2014sketching,
	title={Sketching as a tool for numerical linear algebra},
	author={Woodruff, David P},
	journal={Foundations Trends Theor. Comput. Sci.},
	volume={10},
	number={1},
	pages={1--157},
	year={2014},
    publisher={Now Publishers Boston-Delft}
}

@article{nakatsukasa2023randomized,
	title={Randomized low-rank approximation for symmetric indefinite matrices},
	author={Nakatsukasa, Yuji and Park, Taejun},
	journal={SIAM J. Matrix Anal. Appl.},
	volume={44},
	number={3},
	pages={1370--1392},
	year={2023},
	publisher={SIAM}
}

@article{benzi2005numerical,
  title={Numerical solution of saddle point problems},
  author={Benzi, Michele and Golub, Gene H and Liesen, J{\"o}rg},
  journal={Acta Numer.},
  volume={14},
  pages={1--137},
  year={2005},
  publisher={Cambridge University Press}
}

@incollection{benzi2008some,
  title={Some preconditioning techniques for saddle point problems},
  author={Benzi, Michele and Wathen, Andrew J},
  booktitle={Model order reduction: theory, research aspects and applications},
  pages={195--211},
  year={2008},
  publisher={Springer}
}

@article{song2022two,
  title={A two-parameter shift-splitting preconditioner for saddle point problems},
  author={Song, Shengzhong and Huang, Zhengda},
  journal={Comput. Math. Appl.},
  volume={124},
  pages={7--20},
  year={2022},
  publisher={Elsevier}
}

@article{liang2026partial,
  title={Partial shift-splitting preconditioners for double saddle-point systems},
  author={Liang, Siqi and Huang, Na},
  journal={Comput. Math. Appl.},
  volume={214},
  pages={230--247},
  year={2026},
  publisher={Elsevier}
}

@article{cai2022fast,
  title={Fast deterministic approximation of symmetric indefinite kernel matrices with high dimensional datasets},
  author={Cai, Difeng and Nagy, James and Xi, Yuanzhe},
  journal={SIAM J. Matrix Anal. Appl.},
  volume={43},
  number={2},
  pages={1003--1028},
  year={2022},
  publisher={SIAM}
}

@article{ray2022sublinear,
  title={Sublinear time approximation of text similarity matrices},
  author={Ray, Archan and Monath, Nicholas and McCallum, Andrew and Musco, Cameron},
  journal={Proceedings of the AAAI Conference on Artificial Intelligence},
  volume={36},
  number={7},
  pages={8072--8080},
  year={2022}
}

@article{nakatsukasa2020fast,
  title={Fast and stable randomized low-rank matrix approximation},
  author={Nakatsukasa, Yuji},
  journal={arXiv preprint arXiv:2009.11392},
  year={2020}
}

@book{horn2012matrix,
  title={Matrix analysis},
  author={Horn, Roger A and Johnson, Charles R},
  year={2012},
  publisher={Cambridge university press}
}

@book{tao2023topics,
  title={Topics in random matrix theory},
  author={Tao, Terence},
  volume={132},
  year={2023},
  publisher={American Mathematical Society}
}

@book{vershynin2018high,
  title={High-dimensional probability: {An} introduction with applications in data science},
  author={Vershynin, Roman},
  volume={47},
  year={2018},
  publisher={Cambridge university press}
}

@book{golub2013matrix,
  title={Matrix computations},
  author={Golub, Gene H and Van Loan, Charles F},
  year={2013},
  publisher={JHU press}
}

@article{rudelson2009smallest,
  title={Smallest singular value of a random rectangular matrix},
  author={Rudelson, Mark and Vershynin, Roman},
  journal={Commun. Pur. Appl. Math.},
  volume={62},
  number={12},
  pages={1707--1739},
  year={2009},
  publisher={Wiley Online Library}
}

@article{cao2014shift,
  title={Shift-splitting preconditioners for saddle point problems},
  author={Cao, Yang and Du, Jun and Niu, Qiang},
  journal={J. Comput. Appl. Math.},
  volume={272},
  pages={239--250},
  year={2014},
  publisher={Elsevier}
}

@article{elman1994inexact,
  title={Inexact and preconditioned {Uzawa} algorithms for saddle point problems},
  author={Elman, Howard C and Golub, Gene H},
  journal={SIAM J. Numer. Anal.},
  volume={31},
  number={6},
  pages={1645--1661},
  year={1994},
  publisher={SIAM}
}

@article{bai2008parameterized,
  title={On parameterized inexact {Uzawa} methods for generalized saddle point problems},
  author={Bai, Zhongzhi and Wang, Zengqi},
  journal={Linear Algebra Appl.},
  volume={428},
  number={11-12},
  pages={2900--2932},
  year={2008},
  publisher={Elsevier}
}

@article{benzi2004preconditioner,
  title={A preconditioner for generalized saddle point problems},
  author={Benzi, Michele and Golub, Gene H},
  journal={SIAM J. Matrix Anal. Appl.},
  volume={26},
  number={1},
  pages={20--41},
  year={2004},
  publisher={SIAM}
}

@article{simoncini2004block,
  title={Block triangular preconditioners for symmetric saddle-point problems},
  author={Simoncini, Valeria},
  journal={Appl. Numer. Math.},
  volume={49},
  number={1},
  pages={63--80},
  year={2004},
  publisher={Elsevier}
}

@article{bramble1997analysis,
  title={Analysis of the inexact {Uzawa} algorithm for saddle point problems},
  author={Bramble, James H and Pasciak, Joseph E and Vassilev, Apostol T},
  journal={SIAM J. Numer. Anal.},
  volume={34},
  number={3},
  pages={1072--1092},
  year={1997},
  publisher={SIAM}
}

@article{cao2017preconditioned,
  title={On preconditioned generalized shift-splitting iteration methods for saddle point problems},
  author={Cao, Yang and Miao, Shuxin and Ren, Zhiru},
  journal={Comput. Math. Appl.},
  volume={74},
  number={4},
  pages={859--872},
  year={2017},
  publisher={Elsevier}
}

@article{cao2016simplified,
  title={A simplified {HSS} preconditioner for generalized saddle point problems},
  author={Cao, Yang and Ren, Zhiru and Shi, Quan},
  journal={BIT Numer. Math.},
  volume={56},
  number={2},
  pages={423--439},
  year={2016},
  publisher={Springer}
}
	
\end{document}